\numberwithin{equation}{section}
\newcommand{\ubar}[1]{\underaccent{\bar}{#1}}
\theoremstyle{plain}
\newtheorem{theorem}{Theorem}[section]
\newtheorem{lemma}[theorem]{Lemma}
\newtheorem{proposition}[theorem]{Proposition}
\theoremstyle{definition}
\newtheorem{definition}[theorem]{Definition}
\newtheorem{assumption}[theorem]{Assumption}
\newtheorem{remark}[theorem]{Remark}
\newtheorem{example}[theorem]{Example}
\newcommand{\E}{\mathbb{E}}
\newcommand{\W}{\dot{W}}
\newcommand{\ud}{\ensuremath{\mathrm{d}}}
\newcommand{\Ceil}[1]{\left\lceil #1 \right\rceil}
\newcommand{\Floor}[1]{\left\lfloor #1 \right\rfloor}
\newcommand{\Norm}[1]{\left|\left|  #1   \right|\right|}
\newcommand{\Id}{\text{Id}}
\newcommand{\Res}{\mathop{\text{Res}}}
\newcommand{\FoxH}[5]{H_{#2}^{#1}\left(#3\:\middle\vert\: \begin{subarray}{l}#4\\[0.4em] #5\end{subarray}\right)}
\newcommand{\calB}{\mathcal{B}}
\newcommand{\calF}{\mathcal{F}}
\newcommand{\calG}{\mathcal{G}}
\newcommand{\calK}{\mathcal{K}}
\newcommand{\calH}{\mathcal{H}}
\newcommand{\calL}{\mathcal{L}}
\newcommand{\calM}{\mathcal{M}}
\newcommand{\calN}{\mathcal{N}}
\newcommand{\calP}{\mathcal{P}}
\newcommand{\bbC}{\mathbb{C}}
\newcommand{\bbN}{\mathbb{N}}
\newcommand{\R}{\mathbb{R}}
\newcommand{\RR}{\mathbb{R}}
\newcommand{\Erfc}{\ensuremath{\mathrm{erfc}}}
\DeclareMathOperator{\Lip}{\mathit{L}}
\DeclareMathOperator{\LIP}{Lip}
\DeclareMathOperator{\lip}{\mathit{l}}
\DeclareMathOperator{\Vip}{\overline{\varsigma}}
\DeclareMathOperator{\vip}{\underline{\varsigma}}
\renewcommand{\Re}{\text{Re}}
\renewcommand{\Im}{\text{Im}}
\newcommand{\DCap}[1]{\partial^{#1}}
\newcommand{\DRL}[2]{{}_t D_{#1}^{#2}}
\newcommand{\lMr}[3]{\:{}_{#1} #2_{#3}}
\newcommand{\myRef}[2]{#1}
\title{Nonlinear stochastic time-fractional slow and fast diffusion equations on $\R^d$}
\author{\bf Le Chen
~~and~~Yaozhong Hu\footnote{Research
partially supported by a grant from the Simons Foundation
\#209206}~~and~~David Nualart\footnote{Research partially supported by the NSF grant  DMS1512891 and the ARO grant FED0070445.}\\[1em]
University of Kansas
\date{\small\today}
}
\begin{document}
\maketitle

\begin{center}
\begin{minipage}[rct]{5 in}
\footnotesize \textbf{Abstract:}
This paper studies the nonlinear stochastic partial differential equation of fractional orders
both in space and time variables:
\[
\left(\partial^\beta+\frac{\nu}{2}(-\Delta)^{\alpha/2}\right)u(t,x) = I_t^\gamma\left[\rho(u(t,x))\W(t,x)\right],\quad t>0,\: x\in\R^d,
\]
where $\W$ is the space-time white noise, $\alpha\in(0,2]$, $\beta\in(0,2)$, $\gamma\ge 0$ and $\nu>0$.
Fundamental solutions and their properties, in particular the nonnegativity, are derived.
The existence and uniqueness of solution together with the moment bounds of the solution are obtained under
Dalang's condition: $d<2\alpha+\frac{\alpha}{\beta}\min(2\gamma-1,0)$.
In some cases, the initial data can be measures. When $\beta\in (0,1]$, we prove the sample path regularity
of the solution. 

\vspace{2ex}
\textbf{MSC 2010 subject classifications:}
Primary 60H15. Secondary 60G60, 35R60.

\vspace{2ex}
\textbf{Keywords:}
nonlinear stochastic time-fractional diffusion equations,
measure-valued initial data,
H\"older continuity,
intermittency,
the Fox H-function.
\vspace{4ex}
\end{minipage}
\end{center}


\section{Introduction}
In this paper, we will study the following nonlinear stochastic time-fractional diffusion equations:
\begin{align}\label{E:SPDE}
  \begin{cases}
\left(\displaystyle \DCap{\beta} +\frac{\nu}{2}(-\Delta)^{\alpha/2} \right) u(t,x) =
I_t^\gamma\left[\rho\left(u(t,x)\right) \dot{W}(t,x) \right], & t>0,\:x\in\R^d.\\[0.2em]
u(0,\cdot)=\mu & \text{if $\beta\in \:(1/2,1]$,}\\[0.2em]
\displaystyle u(0,\cdot)=\mu_0,\quad\frac{\partial }{\partial t}u(0,\cdot)=\mu_1 & \text{if $\beta\in \:(1,2)\:$,}
\end{cases}
\end{align}
with $\alpha\in (0,2]$ and $\gamma>0$. In this equation,
$\Delta=\sum_{i=1}^d\partial^2/(\partial x_i^2)$ is the Laplacian with respect to the space variables and
$(-\Delta)^{\alpha/2}$ is the fractional Laplacian. $\W$ denotes the space-time white noise.
$\nu>0$ is the diffusion parameter.
The initial data $\mu$, $\mu_0$ and $\mu_1$
are assumed to be some measures. $\rho$ is a Lipschitz continuous function.
$\DCap{\beta}$ denotes
the {\it Caputo fractional differential operator}:
\[
\DCap{\beta} f(t) :=
\begin{cases}
\displaystyle
 \frac{1}{\Gamma(m-\beta)} \int_0^t\ud
\tau\: \frac{f^{(m)}(\tau)}{(t-\tau)^{\beta+1-m}}&
\text{if $m-1<\beta<m$\;,}\\[1em]
\displaystyle
\frac{\ud^m}{\ud t^m}f(t)& \text{if $\beta=m$}\;,
\end{cases}
\]
and $I_t^\gamma$ is the {\it Riemann-Liouville fractional integral} of order $\gamma> 0$:
\begin{align*}
I_t^\gamma f(t):= \frac{1}{\Gamma(\gamma)}\int_0^t (t-s)^{\gamma-1}f(s)\ud s,  \quad \text{for $t>0$},
\end{align*}
with the convention $I_t^0=\Id$ (the identity operator).
We refer to \cite{Die04,Podlubny99FDE,SamkoKilbasMarichev93} for more
details of these fractional differential operators.

This paper is an extension of a recent work by the first author \cite{Chen14Time},
where the case $\gamma=\Ceil{\beta}-\beta$, $\alpha=2$ and $d=1$ is studied.
Here, $\Ceil{\beta}$ is the smallest integer not less than $\beta$.
The interested reader can find motivations of the model in that reference.
The fractional integral operator $I_t^\gamma$ smooths the noise term.
Removing this integral operator by setting $\gamma=0$, one may expect that the solution becomes less regular.
As proved in \cite{Chen14Time},
when $\alpha=2$, $d=1$ and $\gamma=\Ceil{\beta}-\beta$, there is a mild solution for all $\beta\in(0,2)$.
This is no longer true if this fractional integral operator is not there.
In particular, we will show that, when $\alpha=2$, $d=1$ and $\gamma=0$, the mild solution exists only for $\beta\in(2/3,2)$ instead,
which is a direct consequence of condition \eqref{E:Dalang} below.

Motivations for stochastic partial differential equations (spde) with time-fractional derivative
can be found in many recent papers \cite{Chen14Time,ChenKimKim15,HuHu15,MijenaNane14ST}.
For convenience, we call equation \eqref{E:SPDE} with $\beta\in (0,1]$ the {\it slow diffusion equation}, and
equation \eqref{E:SPDE} with $\beta\in(1,2)$ the {\it fast diffusion equation}.

When $d=1$, $\beta=2$, $\alpha=2$ and $\gamma=0$,
the spde \eqref{E:SPDE} reduces to the
{\it stochastic wave equation} (SWE) on $\R$:
\begin{align}
 \label{E:Wave}
\left(\frac{\partial^2}{\partial t^2}- \frac{\nu}{2}
\frac{\partial^2}{\partial x^2}\right) u(t,x) = \rho(u(t,x)) \W(t,x)\:,
\end{align}
with the speed of wave propagation $(\nu/2)^{1/2}$.
When $d=1$, $\beta=1$, $\alpha=2$ and $\gamma=0$,
the spde \eqref{E:SPDE} reduces to the
{\it stochastic heat equation} (SHE) on $\R$:
\begin{align}
 \label{E:Heat}
\left(\frac{\partial}{\partial t}- \frac{\nu}{2}
\frac{\partial^2}{\partial x^2}\right) u(t,x) = \rho(u(t,x)) \W(t,x)\:.
\end{align}
These two special cases have been studied carefully; see \cite{LeChen13Thesis, ChenDalang13Holder,ChenDalang14Wave,ChenDalang13Heat,ConusEct12Initial}.
The spde \eqref{E:SPDE} for $\beta\in(0,1]$ and $\nu=1-\beta$ has been recently studied in
\cite{MijenaNane14Int,MijenaNane14ST}.
When the noise does not depend on time, a similar model with a general elliptic operator has
been studied in \cite{HuHu15}.
Another related equation is the {\it stochastic fractional heat equation} (SFHE) on $\R$:
\begin{align}
 \label{E:FracHeat}
\left(\frac{\partial}{\partial t}- \lMr{x}{D}{\delta}^\alpha \right) u(t,x) = \rho(u(t,x)) \W(t,x)\:,
\end{align}
which has been studied recently in \cite{ChenDalang14FracHeat,ChenKim14Comparison};
see also \cite{DebbiDozzi05On,FoondunKhoshnevisan08Intermittence}.

\bigskip

All investigations on spde's of the above kinds  require a good study of the corresponding Green functions.
As proved below,
there is a triplet
\[
\left\{\: Z(t,x),\:Z^*(t,x)\:, Y(t,x): \:\:  (t,x)\in\R_+\times\R^d \: \right\},
\]
depending on the parameters $(\alpha,\beta,\gamma,\nu)$,
such that the solution to \eqref{E:SPDE} with $\rho(u(t,x))\W(t,x)$ replaced by a nice function $f(t,x)$ is
represented by 
\begin{align}
 u(t,x) &=
 \begin{cases}
(Z(t,\cdot)*\mu)(x) + \left(Y\star f\right)(t,x), &\text{if $\beta\in(0,1]$,}\\[0.5em]
(Z^*(t,\cdot)*\mu_0)(x) +(Z(t,\cdot)*\mu_1)(x) + \left(Y\star f\right)(t,x), &\text{if $\beta\in(1,2)$,}
 \end{cases}
\end{align}
where ``$*$'' denotes the convolution in the space variable:
\begin{align}
(Z(t,\cdot)*\mu)(x):= \int_{\R^d} Z(t,x-y)\mu(\ud y),
\end{align}
and ``$\star$'' denotes the convolution in both space and time variables:
\[
(Y\star f)(t,x):= \int_0^t\int_{\R^d}  Y(t-s,x-y)f(s,y)\ud s \ud y.
\]
These fundamental solutions are expressed using the {\it Fox H-function} \cite{KilbasSaigo04H}.
%

\bigskip
If we denote the solution to the homogeneous equation of \eqref{E:SPDE} by $J_0(t,x)$, i.e.,
\begin{align}
J_0(t,x)=
\begin{cases}
 \left(Z(t,\cdot)*\mu\right)(x) & \text{if $\beta\in(0,1]$,}\\[0.2em]
 \left(Z^*(t,\cdot)*\mu_0\right)(x)+\left(Z(t,\cdot)*\mu_1\right)(x) & \text{if $\beta\in(1,2)$,}\\[0.2em]
\end{cases}
\end{align}
then the rigorous meaning of \eqref{E:SPDE} is the following stochastic integral equation:
\begin{equation}
\label{E:WalshSI}
 \begin{aligned}
  u(t,x) &= J_0(t,x)+I(t,x),\quad\text{where}\cr
I(t,x) &=\iint_{[0,t]\times\R}
Y\left(t-s,x-y\right)\rho\left(u(s,y)\right)
W(\ud s,\ud y).
 \end{aligned}
\end{equation}
The stochastic integral in the above equation is in the sense of Walsh \cite{Walsh86}.

To establish the the existence and uniqueness of random field solutions to
\eqref{E:SPDE}, the first step is to check {\it Dalang's condition} \cite{Dalang99Extending}:
\[
\int_0^t\ud s\int_{\R^d} \ud y\: |Y(s,y)|^2<\infty,\quad\text{for all $t>0$,}
\]
which is equivalent to the following condition (see Lemma \ref{L:Dalang}):
\begin{align}\label{E:Dalang}
 d<2\alpha + \frac{\alpha}{\beta}\min(2\gamma-1,0)=:\Theta,
\end{align}
which is equivalent to 
\begin{align}
\label{E:Dalang2}
\beta+\gamma>\frac{1}{2}\left(1 + \frac{d\beta}{\alpha}\right)
\quad\text{and}\quad d<2\alpha.
\end{align}
Note that \eqref{E:Dalang2} implies that the space dimension should be less than or equal to $3$.
Among all possible cases in \eqref{E:Dalang}, the following two special cases have better properties:
\begin{align}\label{E:CaseA}
\gamma = 0 \quad&\text{or}\quad \alpha>d=1,\\
 \alpha>d&=1,\label{E:CaseB}
\end{align}
As shown in Lemma \ref{L:HAt0} and Remark \ref{R:YZero} below, under both conditions \eqref{E:Dalang} and \eqref{E:CaseA},
the function $Y(1,x)$ is bounded at $x=0$.
Moreover, under \eqref{E:Dalang} and \eqref{E:CaseB},
all functions $Z(1,x)$, $Z^*(1,x)$ and  $Y(1,x)$ are bounded at $x=0$.

\bigskip
We prove the existence and uniqueness of random field solutions to \eqref{E:WalshSI} in the following three cases:
%

{\noindent\bf Case I:~} If we assume only Dalang's condition \eqref{E:Dalang},
we prove the existence and uniqueness when the initial data are such that
\begin{align}\label{E:InitCt}
\sup_{(s,x)\in[0,t]\times\R^d} |J_0(s,x)|<\infty,\qquad\text{for all $t>0$},
\end{align}
which is satisfied, for example, when initial data are bounded measurable functions.

{\noindent\bf Case II:~}
Under both \eqref{E:Dalang} and \eqref{E:CaseA},
we obtain moment formulas that are similar to those in \cite{ChenDalang13Heat,Chen14Time,ChenDalang14FracHeat}.
The initial data satisfy \eqref{E:InitCt}.

{\noindent\bf Case III:~}
Under both \eqref{E:Dalang} and \eqref{E:CaseB}, the initial data can be measures.
Let $\calM(\R)$ be the set of signed (regular) Borel measures on $\R$.
For $x\in\R$, define an auxiliary function
\begin{align}\label{E:f}
f_{\beta}(\eta,x) :=
\exp\left(-\eta\: |x|^{1+\Floor{\beta}}\right),
\end{align}
where $\Floor{\beta}$ is the largest integer not greater than $\beta$.
Note that the difference between $\Ceil{\beta}$ and $\Floor{\beta}+1$ for $\beta\in(0,2)$
is only at $\beta=1$.
The initial data are assumed to be Borel measures such that
\begin{align}
\begin{cases}
\displaystyle
\left(|\mu|*f_{\beta}(\eta,\cdot)\right)(x)<\infty,&\text{for all $\eta>0$ and $x\in\R$, if $\alpha=2$},\\[1em]
\displaystyle
\sup_{y\in\R}\int_{\R}|\mu|(\ud x)\frac{1}{1+|x-y|^{1+\alpha}}<+\infty, & \text{if $\alpha\in (1,2)$,}
\end{cases}
\label{E:InitData}
\end{align}
where for any Borel measure $\mu$, $\mu=\mu_+-\mu_-$ is the the Jordan decomposition
and $|\mu|=\mu_++\mu_-$.
We use $\calM_{\alpha,\beta}(\R)$ to denote these measures.
In this case, we prove the existence and uniqueness of a solution to \eqref{E:FracHeat}
for all initial data from $\calM_{\alpha,\beta}(\R)$.

\bigskip

Here are some special cases:
\begin{enumerate}[(1)]
\item For \eqref{E:Heat}, i.e., $\alpha=2$, $\beta=1$ and $\gamma=0$, the set of admissible initial data studied
in \cite{ChenDalang13Heat} is $\calM_H(\R)$, which corresponds to $\calM_{2,1}(\R)$ in this paper.
 \item Under the condition that $d=1$, $\alpha=2$, $\beta\in(0,2)$, $\gamma=\Ceil{\beta}-\beta$ (as in \cite{Chen14Time}),
 one can easily verify that condition \eqref{E:Dalang} is always true.
 The possible initial data is $\calM_T^\beta(\R)$, which corresponds to
$\calM_{2,\beta}(\R)$ in this paper.
 \item If $\gamma=1-\beta$ and $\beta\in (0,1)$, then it is ready to see that \eqref{E:Dalang}
 reduces to
 \[
 d< \alpha \min(2,\beta^{-1}),
 \]
 which recovers the condition by Mijena and Nane \cite{MijenaNane14ST}.
 \item If  $\gamma=0$, then \eqref{E:Dalang} becomes
 \[
 \frac{d}{\alpha}+\frac{1}{\beta}<2.
 \]
 Moreover, if $\alpha=2$ and $d=1$, then this condition becomes $\beta>2/3$, which coincides to the condition in \cite[Section 5.2]{ChenKimKim15}.
 \item If $\beta=1$ and $\gamma=0$, then Dalang's condition \eqref{E:Dalang} reduces to $\alpha>d$.
 Since $\alpha\in (0,2]$, we have that $\alpha\in (1,2]$ and $d=1$, which recovers the condition in \cite{ChenDalang14FracHeat}.
\end{enumerate}

As in \cite{ChenDalang14Wave,ChenDalang13Heat,ChenDalang14FracHeat},
we will obtain similar moment formulas expressed using a kernel
function $\calK(t,x)$ when \eqref{E:CaseA} is satisfied. For the SHE and the SWE,
this kernel function $\calK(t,x)$ has explicit forms.
But for the SFHE \cite{ChenDalang14FracHeat}, \eqref{E:SPDE} with $d=1$, $\gamma=\Ceil{\beta}-1$ and $\alpha=2$ in \cite{Chen14Time}, and
the current spde \eqref{E:SPDE},
we obtain some estimates on it.
In particular, we will obtain both upper and lower bounds on $\calK(t,x)$.

\bigskip
After establishing the existence and uniqueness of the solution,
we will study the sample-path regularity for the slow diffusion equations (i.e., the case when $\beta\in (0,1]$).
Given a subset $D\subseteq \R_+\times\R^d$ and positive constants
$\beta_1,\beta_2$, denote by $C_{\beta_1,\beta_2}(D)$ the set of functions
$v: \R_+ \times \R^d \mapsto \R$ with the following property:
for each compact set $K\subseteq D$,
there is a finite constant $C$ such that for all $(t,x)$ and $(s,y)\in K$,
\[
\vert v(t,x) - v(s,y) \vert \leq C \left(\vert
t-s\vert^{\beta_1} + \vert x-y \vert^{\beta_2}\right).
\]
Denote
\[
C_{\beta_1-,\beta_2-}(D) := \cap_{\alpha_1\in \;(0,\beta_1)}
\cap_{\alpha_2\in \;(0,\beta_2)} C_{\alpha_1,\alpha_2}(D)\;.
\]
We will show that for slow diffusion equations,
if the initial data has a bounded density, i.e., $\mu(\ud x)=f(x)\ud x$ with
$f\in L^\infty(\R^d)$, then
\begin{align}\label{E:Holder-u}
u(\cdot,\cdot) \in
C_{\frac{1}{2}\left(2(\beta+\gamma)-1-d\beta/\alpha\right)-,\:\frac{1}{2}\min(\Theta-d,2)-}\left(\R_+^*\times\R^d\right),\quad \text{a.s.},
\end{align}
where $\Theta$ is defined in \eqref{E:Dalang}, and $\R_+^*:=(0,\infty)$.
%

\bigskip
When the initial data are spatially homogeneous (i.e., the initial data are constants),
so is the solution $u(t,x)$,
and then the Lyapunov exponents
\begin{align}
 \overline{m}_p&:=\limsup_{t\rightarrow\infty}\frac{1}{t}\log \E\left[|u(t,x)|^p\right],\\
 \underline{m}_p&:=\liminf_{t\rightarrow\infty}\frac{1}{t}\log \E\left[|u(t,x)|^p\right],
\end{align}
do not depend on the spatial variable.
In this case, a solution is called {\it fully intermittent} if $m_1=0$ and
$\underline{m}_2>0$ (see \cite[Definition III.1.1, on p. 55]{CarmonaMolchanov94}).
As for the weak intermittency, there are various definitions.
For convenience of stating our results, we will call the solution {\it weakly intermittent of type I} if $\underline{m}_2>0$,
and {\it weakly intermittent of type II} if $\overline{m}_2>0$.
Clearly, the weak intermittency of type I is stronger than the
the weak intermittency of type II, but weaker than the full intermittency by missing $m_1=0$.
The weak intermittency of type II is used in \cite{FoondunKhoshnevisan08Intermittence}.

The full intermittency for the SHE and the SFHE are established in
\cite{BertiniCancrini94Intermittence} and \cite{ChenKim14Comparison}, respectively.
The weak intermittency of type I and II for SWE are
proved in \cite{ChenDalang14Wave} and \cite[Theorem 2.3]{ConusEtal13Wave}, respectively.
We will establish the weak intermittency of type II for both slow and fast diffusion equations.
For some slow diffusion equations, we will prove the weak intermittency of type I.
Moreover, we show that
\begin{align}\label{E:WI}
 \overline{m}_p\le C \: p^{1+\frac{1}{2(\beta+\gamma)-1-d\beta/\alpha}}\:.
\end{align}
It reduces to the following special cases:
\begin{enumerate}[(1)]
 \item The SHE case, i.e., $\beta =1$, $\alpha=2$, $\gamma=0$ and $d=1$: $\overline{m}_p \le C\: p^3$.
See \cite{BertiniCancrini94Intermittence,ChenDalang13Heat,FoondunKhoshnevisan08Intermittence};
\item The SWE case, i.e., $\beta=2$, $\alpha=2$, $\gamma=0$ and $d=1$:
$\overline{m}_p \le C\:p^{3/2}$. See \cite{ChenDalang14Wave};
\item The SFHE case, i.e., $\beta=1$, $\gamma=0$ and $d=1$:
$\overline{m}_p \le C\:p^{2+1/(\alpha-1)}$. See \cite{ChenDalang14FracHeat};
\item The time-fractional diffusion equation case as in \cite{Chen14Time} that $\alpha=2$, $\gamma=\Ceil{\beta}-\beta$ and $d=1$:
$\overline{m}_p \le C\:p^{\frac{4\Ceil{\beta}-\beta}{4\Ceil{\beta}-2-\beta}}$;
\item The time-fractional spde as in Theorems 3.11 and 3.12 of \cite{CHHH15Time} with $d=1$ and $\kappa=1$
\footnote{In \cite{CHHH15Time}, the constant $\kappa$ is the exponent for the Riesz kernel, and the case $\kappa=1$ corresponds to the space-time white noise.}:
\[
\begin{cases}
\overline{m}_p\le C \: p^{\frac{2\alpha-\beta}{2\alpha-\beta-\alpha}}& \text{when $\gamma=0$,}
\\[0.5em]
\overline{m}_p\le C \: p^{\frac{2\alpha\Ceil{\beta}-\beta}{2\alpha\Ceil{\beta}-\beta-\alpha}}& \text{when $\gamma=\Ceil{\beta}-\beta$.}
\end{cases}
\]
\end{enumerate}

\bigskip
In order to obtain some lower bounds for the moments, we prove that under the following cases
\begin{align}\label{E:4cases}
\begin{cases}
\text{Case I:~~} & \alpha\in (0,2],\: \beta\in(0,1),\:d\in\bbN,\: \gamma\ge 0, \cr
\text{Case II:~~} & \alpha\in (0,2],\: \beta=1,\: d\in\bbN, \gamma\in \{0\}\cup(1,\infty),\cr
\text{Case III:~~}& 1<\beta< \alpha \le 2,\: d\le 3,\: \gamma\ge 0,\cr
\text{Case IV:~~}& 1<\beta=\alpha < 2,\: d\le 3,\: \gamma\ge (d+3)/2-\beta,
\end{cases}
\end{align}
the fundamental solution $Y$ is nonnegative (see Theorem \ref{T:NonY} blow).
These results generalize those obtained by Mainardi {\it et al} \cite{MainardiEtc01Fundamental},
Pskhu \cite{Pskhu09}, and Chen {\it et al} \cite{CHHH15Time}.
In the end, we derive some lower bounds for the moments of the solution to \eqref{E:FracHeat} under \eqref{E:Dalang} and \eqref{E:4cases}.

\bigskip
This paper is structured as follows.
In Section \ref{S:Notation} we first give some notation and preliminaries.
The main results are stated in Section \ref{S:Main}.
The fundamental solutions are studied in Section \ref{S:Y}.
The proof of the two existence and uniqueness theorems are given in Section \ref{S:Proof}.
Finally, in Appendix, we prove some properties of the Fox H-functions.

\section{Some preliminaries and notation}\label{S:Notation}
Let $W=\left\{W_t(A):A\in\calB_b\left(\R^d\right),t\ge 0 \right\}$
be a space-time white noise
defined on a complete probability space $(\Omega,\calF,P)$, where
$\calB_b\left(\R^d\right)$ is the collection of Borel sets with finite Lebesgue measure.
Let
\[
\calF_t = \sigma\left(W_s(A):0\le s\le
t,A\in\calB_b\left(\R^d\right)\right)\vee
\calN,\quad t\ge 0,
\]
be the natural filtration augmented by the $\sigma$-field $\calN$ generated
by all $P$-null sets in $\calF$.
We use $\Norm{\cdot}_p$ to denote the $L^p(\Omega)$-norm ($p\ge 1$).
In this setup, $W$ becomes a worthy martingale measure in the sense of Walsh
\cite{Walsh86}, and $\iint_{[0,t]\times\R^d}X(s,y) W(\ud s,\ud y)$ is well-defined for a suitable
class of random fields $\left\{X(s,y),\; (s,y)\in\R_+\times\R^d\right\}$.
\bigskip

Recall that the rigorous meaning of the spde \eqref{E:SPDE} is in the integral form \eqref{E:WalshSI}.

\begin{definition}\label{DF:Solution}
A process $u=\left\{u(t,x),\:(t,x)\in\R_+^*\times\R^d\right\}$  is called a {\it random field solution} to
\eqref{E:SPDE} if
\begin{enumerate}[(1)]
 \item $u$ is adapted, i.e., for all $(t,x)\in\R_+^*\times\R^d$, $u(t,x)$ is $\calF_t$-measurable;
\item $u$ is jointly measurable with respect to
$\calB\left(\R_+^*\times\R^d\right)\times\calF$;
\item for all $(t,x)\in\R_+^*\times\R^d$, the following space-time convolution is finite:
\[
\left(Y^2 \star \Norm{\rho(u)}_2^2\right)(t,x):=
\int_0^t \ud s \int_{\R^d} \ud y
\: Y^2(t-s,x-y)\Norm{\rho(u(s,y))}_2^2
<+\infty;
\]
\item the function $(t,x)\mapsto I(t,x)$ mapping $\R_+^*\times\R^d$
into $L^2(\Omega)$ is continuous;
\item $u$ satisfies \eqref{E:WalshSI} a.s.,for all $(t,x)\in\R_+^*\times\R^d$.
\end{enumerate}
\end{definition}

Assume that the function $\rho:\R\mapsto \R$ is globally Lipschitz
continuous with Lipschitz constant $\LIP_\rho>0$.
We need some growth conditions on $\rho$ \footnote{This is a consequence of the Lipschitz continuity of $\rho$.}:
assume that for some constants $\Lip_\rho>0$ and
$\Vip \ge 0$,
\begin{align}\label{E:LinGrow}
|\rho(x)|^2 \le \Lip_\rho^2 \left(\Vip^2 +x^2\right),\qquad \text{for
all $x\in\R$}.
\end{align}
Sometimes we need a lower bound on $\rho(x)$:
assume that for
some constants $\lip_\rho>0$ and $\vip \ge 0$,
\begin{align}\label{E:lingrow}
|\rho(x)|^2\ge \lip_{\rho}^2\left(\vip^2+x^2\right),\qquad \text{for all $x\in\R$}\;.
\end{align}

For all $(t,x)\in\R_+^*\times \R^d$, $n\in\bbN$ and $\lambda\in\R$, define
\begin{align}
\notag
\calL_0\left(t,x\right) &:= Y^2(t,x)  \\
\label{E:Ln}
\calL_n\left(t,x\right)&:=
\left(\calL_0\star \dots\star\calL_0\right)(t,x),\quad\text{for $n\ge 1$, ($n$ convolutions),}\\
\calK\left(t,x;\lambda\right)&:= \sum_{n=0}^\infty
\lambda^{2(n+1)}  \calL_n\left(t,x;\lambda\right).
\label{E:K}
\end{align}
We will use the following conventions to the kernel functions $\calK(t,x;\lambda)$:
\begin{align}\label{E:Convention}
\begin{aligned}
 \calK(t,x) &:= \calK(t,x;\lambda),&
\overline{\calK}(t,x) &:= \calK\left(t,x;\Lip_\rho\right),\\
\underline{\calK}(t,x) &:= \calK\left(t,x;\lip_\rho\right),&
\widehat{\calK}_p(t,x) &:= \calK\left(t,x; 4\sqrt{p} \Lip_\rho\right),\quad\text{for $p\ge 2$}\:.
\end{aligned}
\end{align}
Throughout the paper, denote
\begin{align}\label{E:sigma}
\sigma:=2(1-\beta-\gamma) + \beta d/\alpha.
\end{align}
Note that
\begin{align}
\eqref{E:Dalang} \quad \Rightarrow \quad d<2\alpha +\frac{\alpha}{\beta}(2\nu-1) \quad \Leftrightarrow \quad
2(\beta+\gamma)-1-d\beta/\alpha>0
\quad
\Leftrightarrow
\quad \sigma <1.
\label{E:sigma<1}
\end{align}

Let $\DRL{+}{\alpha}$ denote the {\it Riemann-Liouville fractional derivative} of order $\alpha\in\R$
(see, e.g., \cite[(2.79) and (2.88)]{Podlubny99FDE}):
\begin{align}
\label{E:RLD}
\DRL{+}{\alpha} f(t):=
\begin{cases}
\displaystyle
 \frac{1}{\Gamma(m-\alpha)} \frac{\ud^m}{\ud t^m}\int_0^t\ud
\tau\: \frac{f(\tau)}{(t-\tau)^{\alpha+1-m}}&
\text{if $m-1<\alpha<m$ and $\alpha\ge 0$,}\\[1em]
\displaystyle
\frac{\ud^m}{\ud t^m}f(t)& \text{if $\alpha=m\ge 0$},\\[1em]
\displaystyle
\frac{1}{\Gamma(-\alpha)}\int_0^t s^{-\alpha-1}f(s)\ud s& \text{if $\alpha<0$}.
\end{cases}
\end{align}

We will need the {\it two-parameter Mittag-Leffler function}
\begin{align}\label{E:Mittag-Leffler}
E_{\alpha,\beta}(z) := \sum_{k=0}^{\infty}
\frac{z^k}{\Gamma(\alpha k+\beta)},
\qquad \alpha>0,\;\beta> 0\;,
\end{align}
which is a generalization of exponential function, $E_{1,1}(z)=e^z$; see, e.g.,
\cite[Section 1.2]{Podlubny99FDE}. A function is called {\it completely monotonic}
if  $(-1)^n f^{(n)}(x)\ge 0$ for $n =0,1,2,\dots$;
 see \cite[Definition 4]{Widder41LaplaceTr}.
An important fact \cite{Schneider96CM} concerning the Mittag-Leffler function is that
\begin{align}\label{E:E-CM}
 \text{$x\in\R_+\mapsto E_{\alpha,\beta}(-x)$ is
completely monotonic}
 \quad\Longleftrightarrow\quad
0<\alpha\le 1\wedge \beta.
\end{align}
By \cite[(2.9.27)]{KilbasSaigo04H}, the above Mittag-Leffler function is a special case of the Fox H-function:
\[
E_{\alpha,\beta}(z)=\FoxH{1,1}{1,2}{-z}{(0,1)}{(0,1),\:(1-\beta,\alpha)}.
\]

\section{Main results}\label{S:Main}
The first two theorems are about the existence, uniqueness and moment estimates of the solutions to \eqref{E:SPDE}.
The second one, in particular, possesses the same form as the one in \cite[Theorem 3.1]{Chen14Time}.
See also similar results for other equations, e.g., SHE \cite[Theorem 2.4]{ChenDalang13Heat},
SWE \cite[Theorem 2.3]{ChenDalang14Wave}, and
SFHE \cite[Theorem 3.1]{ChenDalang14FracHeat}.

\begin{theorem}[Existence, uniqueness and moments (I)]
\label{T2:ExUni}
Under \eqref{E:Dalang},
the spde \eqref{E:SPDE} has a unique (in the sense of versions) random field solution $\{u(t,x): (t,x)\in\R_+^* \times
\R^d \}$ if the initial data are such that
\begin{align}\label{E:BddInit}
\widehat{C}_t :=\sup_{(s,x)\in[0,t]\times\R^d} |J_0(s,x)|<+\infty.
\end{align}
Moreover, the following statements are true:
\begin{enumerate}[(1)]
 \item $(t,x)\mapsto u(t,x)$ is $L^p(\Omega)$-continuous for all $p\ge 2$;
\item For all even integers $p\ge 2$, all $t>0$ and
$x,y\in\R^d$,
\begin{align}\label{E:MomUpBdd}
 \Norm{u(t,x)}_p^2 \le
2J_0^2(t,x) + \left[\Vip^2+2\widehat{C}_t^2\right]\exp\left(C \lambda^{\frac{2}{1-\sigma}}t\right),
\end{align}
where $C$ is some universal constant not depending on $p$, and
$\sigma$ is defined in \eqref{E:sigma}.
\end{enumerate}
\end{theorem}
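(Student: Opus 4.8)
The plan is to construct the solution by a Picard iteration and to read off the moment estimate \eqref{E:MomUpBdd} from a Gronwall-type argument driven by the resolvent kernel $\widehat{\calK}_p$ of \eqref{E:K}--\eqref{E:Convention}. Put $u_0(t,x):=J_0(t,x)$ and, for $n\ge0$,
\[
u_{n+1}(t,x):=J_0(t,x)+\iint_{[0,t]\times\R^d}Y(t-s,x-y)\,\rho(u_n(s,y))\,W(\ud s,\ud y).
\]
By Lemma \ref{L:Dalang}, Dalang's condition \eqref{E:Dalang} makes every stochastic integral well defined, and an induction on $n$ using the Walsh--Burkholder--Davis--Gundy inequality (with a constant $z_p=O(\sqrt p)$), Minkowski's inequality in $L^{p/2}(\Omega)$, and the linear growth \eqref{E:LinGrow} of $\rho$ shows that each $u_n$ is adapted, jointly measurable, and satisfies, for $p\ge2$,
\[
\Norm{u_{n+1}(t,x)}_p^2\le 2J_0^2(t,x)+\tfrac{\lambda^2}{2}\int_0^t\!\!\int_{\R^d}Y^2(t-s,x-y)\bigl(\Vip^2+\Norm{u_n(s,y)}_p^2\bigr)\,\ud s\,\ud y,
\]
where $\lambda:=4\sqrt p\,\Lip_\rho$, consistently with the convention \eqref{E:Convention} for $\widehat{\calK}_p$.

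The crux is the kernel estimate. Since the Fox-H representation makes $Y(t,\cdot)$ radial and self-similar, $g(t):=\int_{\R^d}Y^2(t,y)\,\ud y$ does not depend on $x$, and a scaling computation together with Lemma \ref{L:HAt0} gives $g(t)=c_0\,t^{-\sigma}$ with $\sigma=2(1-\beta-\gamma)+\beta d/\alpha$; here Dalang's condition enters precisely as the statement $\sigma<1$ in \eqref{E:sigma<1}, which is what makes $g$ locally integrable. Hence the $n$-fold time convolution of $g$ equals $c_0^{\,n+1}\Gamma(1-\sigma)^{n+1}\Gamma((n+1)(1-\sigma))^{-1}t^{(n+1)(1-\sigma)-1}$ by the Beta integral, so iterating the recursion above and summing the series \eqref{E:K} against the constant $\Vip^2+2\widehat{C}_t^2$ produces a Mittag--Leffler function $E_{1-\sigma,1}$ of argument proportional to $\lambda^2t^{1-\sigma}$; by the exponential asymptotics of Mittag--Leffler functions this is bounded by $C\exp\bigl(C\lambda^{2/(1-\sigma)}t\bigr)$ with $C$ a universal constant, in particular independent of $p$. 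Taking $n\to\infty$ then yields precisely \eqref{E:MomUpBdd} for even integers $p$.

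For convergence and identification of the limit, apply the same recursion to the increments $u_{n+1}-u_n$, whose ``initial term'' vanishes: the kernel estimate shows that $D_n(T):=\sup_{(t,x)\in[0,T]\times\R^d}\Norm{u_{n+1}(t,x)-u_n(t,x)}_p^2$ is dominated by $C_T\,(C\,T^{1-\sigma})^n/\Gamma(n(1-\sigma)+1)$, so $\sum_n\sqrt{D_n(T)}<\infty$ and $u_n$ converges in $L^p(\Omega)$, uniformly on $[0,T]\times\R^d$, to a limit $u$; passing to the limit in the stochastic integral (again via the isometry/BDG) shows that $u$ solves \eqref{E:WalshSI} and inherits \eqref{E:MomUpBdd}. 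Adaptedness and joint measurability pass to the limit, and property (3) of Definition \ref{DF:Solution} follows from \eqref{E:MomUpBdd} and \eqref{E:LinGrow}. For the $L^p$-continuity of $(t,x)\mapsto u(t,x)$ (conclusion (1), which also delivers property (4)), one bounds $\Norm{I(t,x)-I(t',x')}_p^2$ by BDG and splits the kernel difference into a spatial-increment piece and a temporal-increment piece, each handled by $L^2$-continuity properties of $Y$ coming from its Fox-H representation together with \eqref{E:MomUpBdd}.

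Finally, for uniqueness, if $u,v$ are two random field solutions then $w:=u-v$ satisfies $w(t,x)=\iint Y(t-s,x-y)[\rho(u(s,y))-\rho(v(s,y))]\,W(\ud s,\ud y)$; property (3) of Definition \ref{DF:Solution} guarantees $\int_0^t\!\int_{\R^d}Y^2(t-s,x-y)\Norm{w(s,y)}_p^2\,\ud s\,\ud y<\infty$, and the Lipschitz bound on $\rho$ together with the Gronwall-type lemma attached to the kernel $g(t)=c_0t^{-\sigma}$ forces $\Norm{w(t,x)}_p\equiv0$, after a routine localization in the space variable providing the a priori finiteness that starts the Gronwall iteration. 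I expect the main obstacle to be the kernel estimate of the second paragraph: extracting the exact scaling $g(t)=c_0t^{-\sigma}$ from the Fox-H representation of $Y$, and above all tracking all constants through the Mittag--Leffler asymptotics so that the exponential rate in \eqref{E:MomUpBdd} has the stated form $C\lambda^{2/(1-\sigma)}t$ with $C$ genuinely independent of $p$, since a naive Gronwall bound would not capture this $p$-dependence; the a priori space-uniform moment bound needed in the uniqueness step is a secondary technical point.
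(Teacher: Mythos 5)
Your proposal is correct and follows essentially the same route as the paper: a Picard iteration in the style of Chen--Dalang (which the paper simply cites), combined with the key new ingredient, namely that $\int_{\R^d}Y^2(t,y)\,\ud y=C_\sharp\,t^{-\sigma}$ with $\sigma<1$ under Dalang's condition (the paper's Lemma \ref{L:Y^2} and \eqref{E:sigma<1}), so that the $n$-fold convolutions sum via the Beta integral to a Mittag--Leffler function whose exponential asymptotics give $\exp\bigl(C\lambda^{2/(1-\sigma)}t\bigr)$ with $C$ independent of $p$ (the paper's Lemma \ref{L:Ht}). The only cosmetic difference is that the paper computes the $L^2$-norm of $Y(t,\cdot)$ by Plancherel rather than by the scaling property plus the pointwise asymptotics of $Y(1,\cdot)$, and it organizes the bound as $\bigl(1\star\widehat{\calK}_p\bigr)(t,x)=H(t;\lambda)$.
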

This theorem is proved in Section \ref{S2:ExUni}.
Note that if the initial data are bounded functions, then \eqref{E:BddInit} is satisfied.

\begin{theorem}[Existence, uniqueness and moments (II)]
\label{T:ExUni}
If Dalang's condition \eqref{E:Dalang} is satisfied, then the spde \eqref{E:SPDE} has a unique (in the sense of versions) random field solution $\{u(t,x): (t,x)\in\R_+^* \times
\R^d \}$ starting from
either initial data that satisfy \eqref{E:BddInit} under condition \eqref{E:CaseA}
or any Borel measures from $\calM_{\alpha,\beta}(\R)$ under condition \eqref{E:CaseB}.
Moreover, the following statements are true:
\begin{enumerate}[(1)]
 \item $(t,x)\mapsto u(t,x)$ is $L^p(\Omega)$-continuous for all $p\ge 2$;
\item For all even integers $p\ge 2$, all $t>0$ and
$x,y\in\R^d$,
\begin{align}\label{E:MomUp}
 \Norm{u(t,x)}_p^2 \le
\begin{cases}
 J_0^2(t,x) + \left(\left[\Vip^2+J_0^2\right] \star \overline{\calK} \right)
(t,x),& \text{if $p=2$}\;,\\[0.5em]
2J_0^2(t,x) + \left(\left[\Vip^2+2J_0^2\right] \star \widehat{\calK}_p \right)
(t,x),& \text{if $p>2$}\;;
\end{cases}
\end{align}
\item If $\rho$ satisfies \eqref{E:lingrow}, then under \eqref{E:Dalang}, \eqref{E:CaseA} and  the first two cases of \eqref{E:4cases},
for all $t>0$ and $x,y\in\R^d$, it holds that
\begin{align}
\label{E:SecMom-Lower}
\Norm{u(t,x)}_2^2 \ge J_0^2(t,x) + \left(\left(\vip^2+J_0^2\right) \star
\underline{\calK} \right)
(t,x)\;.
\end{align}
\end{enumerate}
\end{theorem}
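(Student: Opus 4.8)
I would proceed by Picard iteration. Set $u_0(t,x):=J_0(t,x)$ and
\[
u_{n+1}(t,x):=J_0(t,x)+\iint_{[0,t]\times\R^d}Y(t-s,x-y)\,\rho\bigl(u_n(s,y)\bigr)\,W(\ud s,\ud y),\qquad n\ge 0.
\]
By the Walsh isometry, the bound $(a+b)^2\le 2a^2+2b^2$, the linear growth \eqref{E:LinGrow}, and Dalang's condition \eqref{E:Dalang} (equivalently $h(t):=\int_{\R^d}Y(t,y)^2\,\ud y\in L^1_{\mathrm{loc}}(\R_+)$, by Lemma \ref{L:Dalang}), an induction gives $u_n(t,x)\in L^2(\Omega)$ with $\sup_{[0,T]\times\R^d}\Norm{u_n(t,x)}_2<\infty$ when \eqref{E:BddInit} holds, together with a renewal bound $H_{n+1}(t)\le\LIP_\rho^2\int_0^t h(t-s)H_n(s)\,\ud s$ for $H_n(t):=\sup_{x\in\R^d}\Norm{u_{n+1}(t,x)-u_n(t,x)}_2^2$. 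The summability of the iterated kernels $\calL_n$ of \eqref{E:Ln}---established in Section \ref{S:Y}, where the convergence of $\calK$, $\overline{\calK}$ and $\widehat{\calK}_p$ is also proved---then yields $\sum_n\sup_{t\le T}H_n(t)^{1/2}<\infty$, so $u_n\to u$ uniformly in $L^2(\Omega)$ on compacts; the limit is adapted, jointly measurable, and solves \eqref{E:WalshSI}. Uniqueness among versions follows from the same renewal estimate applied to the difference of two solutions, and the $L^p(\Omega)$-continuity in~(1) from the local integrability of $h$ and the continuity of $Y$ away from the origin (using the Burkholder--Davis--Gundy inequality when $p>2$).

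\textbf{(b) Measure-valued initial data---the main obstacle.} Under \eqref{E:CaseB} ($\alpha>d=1$), Lemma \ref{L:HAt0} and Remark \ref{R:YZero} give that $Z(1,\cdot)$, $Z^*(1,\cdot)$ and $Y(1,\cdot)$ are bounded at the origin; combined with the scaling identities and the pointwise decay estimates for the Fox H-representations of $Z,Z^*,Y$ from Section \ref{S:Y}, the integrability condition \eqref{E:InitData} defining $\calM_{\alpha,\beta}(\R)$ shows that $J_0(t,x)$ is finite, jointly measurable and continuous on $\R_+^*\times\R$, with at worst an integrable singularity as $t\downarrow 0$, and that $\bigl(J_0^2\star\overline{\calK}\bigr)(t,x)$ and $\bigl(J_0^2\star\widehat{\calK}_p\bigr)(t,x)$ are finite there. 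One then reruns the Picard iteration in a weighted norm $\sup_{x}\Norm{u(t,x)}_p/\Phi(t,x)$ with a weight $\Phi$ absorbing the behaviour of $J_0$ and of $(1\star\calK)(t,x)$ near $t=0$. I expect this step to be the hard part: extracting sharp pointwise control of $Z$ and $Z^*$ as $t\downarrow 0$ in both regimes $\alpha=2$ and $\alpha\in(1,2)$ of \eqref{E:InitData}, and matching it to the singularity of $\calK$; the remaining bookkeeping is routine.

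\textbf{(c) Moment upper bounds (assertion (2)).} For $p=2$, the Walsh isometry together with $\E[I(t,x)]=0$ turns \eqref{E:WalshSI} into the identity $\Norm{u(t,x)}_2^2=J_0^2(t,x)+\bigl(Y^2\star\Norm{\rho(u)}_2^2\bigr)(t,x)$. Substituting $\Norm{\rho(u(s,y))}_2^2\le\Lip_\rho^2\bigl(\Vip^2+\Norm{u(s,y)}_2^2\bigr)$ and iterating $N$ times gives
\[
\Norm{u(t,x)}_2^2\le J_0^2(t,x)+\sum_{n=0}^{N-1}\Lip_\rho^{2(n+1)}\bigl(\calL_n\star(\Vip^2+J_0^2)\bigr)(t,x)+\Lip_\rho^{2(N+1)}\bigl(\calL_N\star(\Vip^2+\Norm{u}_2^2)\bigr)(t,x),
\]
and, since $\Norm{u}_2^2$ is locally bounded by~(a) and $\sum_n\Lip_\rho^{2(n+1)}\calL_n$ converges, the last term vanishes as $N\to\infty$, yielding the $p=2$ line of \eqref{E:MomUp} with $\overline{\calK}=\calK(\cdot,\cdot;\Lip_\rho)$. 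For $p>2$, combining $(a+b)^2\le 2a^2+2b^2$, the Burkholder--Davis--Gundy inequality (with a constant $\le 2\sqrt{p}$), and Minkowski's integral inequality gives $\Norm{u(t,x)}_p^2\le 2J_0^2(t,x)+8p\,\Lip_\rho^2\bigl(Y^2\star(\Vip^2+\Norm{u}_p^2)\bigr)(t,x)$; since $8p\,\Lip_\rho^2\le(4\sqrt{p}\,\Lip_\rho)^2$, the same iteration produces the $p>2$ line of \eqref{E:MomUp} with $\widehat{\calK}_p=\calK(\cdot,\cdot;4\sqrt{p}\,\Lip_\rho)$.

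\textbf{(d) Moment lower bound (assertion (3)).} Adjoin the hypotheses \eqref{E:lingrow}, \eqref{E:CaseA} and one of the first two cases of \eqref{E:4cases}; then $Y\ge 0$ on $\R_+\times\R^d$ by Theorem \ref{T:NonY}. Since $\E[I(s,y)]=0$, Jensen's inequality gives $\Norm{u(s,y)}_2^2\ge J_0^2(s,y)$. Feeding $\Norm{\rho(u(s,y))}_2^2\ge\lip_\rho^2\bigl(\vip^2+\Norm{u(s,y)}_2^2\bigr)$ into the identity of~(c) and iterating---each substitution being order-preserving because the kernels $\calL_n$ are nonnegative---gives the expansion of~(c) with $\le$ replaced by $\ge$ and $(\Lip_\rho,\Vip)$ replaced by $(\lip_\rho,\vip)$; discarding the nonnegative remainder and letting $N\to\infty$ yields \eqref{E:SecMom-Lower} with $\underline{\calK}=\calK(\cdot,\cdot;\lip_\rho)$, whose finiteness is already known from~(c) since $\underline{\calK}\le\overline{\calK}$. (The standing nonnegativity of $Y$ in these cases is, moreover, what the subsequent lower bounds on the Lyapunov exponents rely on.)
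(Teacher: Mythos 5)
Your overall strategy coincides with the paper's: the proof here is exactly a Picard iteration closed by the renewal structure of the kernels $\calL_n$, with the upper bounds in \eqref{E:MomUp} obtained by iterating the Walsh isometry (resp.\ Burkholder--Davis--Gundy with constant $2\sqrt p$ plus Minkowski) and the lower bound \eqref{E:SecMom-Lower} obtained by running the same iteration in reverse, which is legitimate precisely because Theorem \ref{T:NonY} gives $Y\ge 0$ in the first two cases of \eqref{E:4cases}. Your parts (a), (c) and (d) reproduce this faithfully, including the correct bookkeeping $8p\Lip_\rho^2\le(4\sqrt p\,\Lip_\rho)^2$ that produces $\widehat{\calK}_p=\calK(\cdot,\cdot;4\sqrt p\,\Lip_\rho)$.

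Two points where you diverge from, or fall short of, what the paper actually does. First, for measure-valued initial data under \eqref{E:CaseB} you propose a weighted-norm Picard scheme with an unspecified weight $\Phi$; the paper instead isolates a single quantitative statement, Lemma \ref{L:InDt}, namely $\sup_{(t,x)\in K}\bigl(\bigl[1+J_0^2\bigr]\star\calK\bigr)(t,x)<\infty$ for every compact $K\subseteq\R_+^*\times\R^d$, proved from the asymptotics of $Z,Z^*$ at $0$ and $\infty$ and the upper bound on $\calK$ in Theorem \ref{T:K-bounds}. This is exactly the right device because the $n$-th Picard iterate is automatically dominated by $2J_0^2+\bigl(\bigl[\Vip^2+2J_0^2\bigr]\star\widehat{\calK}_p\bigr)$, so local finiteness of that convolution is all one needs to close the induction and to get uniform convergence on compacts --- no weight is required. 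You correctly identify this as the hard step but leave it as a programme rather than a proof. Second, your justification of the $L^p(\Omega)$-continuity in assertion (1) (``local integrability of $h$ and continuity of $Y$ away from the origin'') is too glib: the paper needs the quantitative continuity estimates of Proposition \ref{P:G-SD} when $\beta\le 1$ and, for the fast diffusion case $\beta\in(1,2)$, the domination argument of Propositions \ref{P:G-Margin} and \ref{P:G-FD} (comparing $Y(t'-s,x'-y)$ with $Y(t+1-s,x-y)$ for large $|y|$) to pass to the limit in $\iint\bigl(Y(t'-s,x'-y)-Y(t-s,x-y)\bigr)^2\,\ud s\,\ud y$. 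Without some such uniform-in-$(t',x')$ control, continuity of the stochastic integral --- which is also what keeps each iterate in the admissible class $\calP_2$ --- does not follow. Neither issue is a wrong turn, but both are genuine gaps in the written argument as it stands.
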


The proof of this theorem is given in Section \ref{S:ExUni}.

\bigskip
The following theorem gives the H\"older continuity of the solution for slow diffusion equations.

\begin{theorem}
\label{T:Holder}
Recall that the constants $\sigma$ and $\Theta$ are defined in
\eqref{E:sigma} and \eqref{E:Dalang}, respectively.
If $\beta\in (0,1]$ and \eqref{E:BddInit} holds, then under \eqref{E:Dalang},
\begin{align}\label{E:LpBdd}
\sup_{(t,x)\in [0,T]\times\R^d}
\Norm{u(t,x)}_p^2 <+\infty, \quad\text{for all $T\ge 0$ and $p\ge 2$.}
\end{align}
Moreover, we have
\begin{align}\label{E:Holder-I}
I(\cdot,\cdot)\in
C_{\frac{1}{2}(1-\sigma)-,\: \frac{1}{2}\min(\Theta-d,2)-}\left(\R_+\times\R^d\right)\;,\quad\text{a.s.,}
\end{align}
and \eqref{E:Holder-u} holds.
\end{theorem}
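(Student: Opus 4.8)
\emph{Plan of proof.}

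The bound \eqref{E:LpBdd} is immediate. By \eqref{E:sigma<1}, condition \eqref{E:Dalang} implies $\sigma<1$, so the exponent in \eqref{E:MomUpBdd} is finite; since \eqref{E:BddInit} holds, $\widehat{C}_T<\infty$ and $J_0$ is bounded on $[0,T]\times\R^d$, so \eqref{E:MomUpBdd} of Theorem \ref{T2:ExUni} gives $\sup_{(t,x)\in[0,T]\times\R^d}\Norm{u(t,x)}_p^2<\infty$ for every even integer $p\ge 2$, hence for every $p\ge 2$ by monotonicity of the $L^p(\Omega)$-norms. Write $M_{p,T}$ for this supremum.

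To prove \eqref{E:Holder-I} I would bound the $p$-th moments of the space--time increments of $I$ and then apply an anisotropic form of Kolmogorov's continuity theorem. Fix $T>0$, $(t,x),(t',x')\in(0,T]\times\R^d$ with $t'\le t$, and set $\tau:=t-t'$, $h:=x-x'$. By the Burkholder--Davis--Gundy inequality for Walsh integrals, \eqref{E:LinGrow}, and $M_{p,T}<\infty$,
\begin{equation*}
\Norm{I(t,x)-I(t',x')}_p^2\le C_{p,T}\left(\mathcal{A}+\mathcal{B}+\mathcal{D}\right),
\end{equation*}
where $\mathcal{A}=\int_0^{\tau}\ud r\int_{\R^d}Y^2(r,y)\,\ud y$ (the new time strip $[t',t]$), $\mathcal{B}=\int_0^{t'}\ud r\int_{\R^d}|Y(r+\tau,y)-Y(r,y)|^2\ud y$ (the time increment, after $r=t'-s$), and $\mathcal{D}=\int_0^{t'}\ud r\int_{\R^d}|Y(r,z)-Y(r,z-h)|^2\ud z$ (the space increment). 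All three are handled via the Fourier representation $\widehat{Y}(r,\xi)=r^{\beta+\gamma-1}E_{\beta,\beta+\gamma}\!\left(-\tfrac{\nu}{2}r^{\beta}|\xi|^{\alpha}\right)$, the scaling $\xi\mapsto r^{\beta/\alpha}\xi$, Plancherel's identity, and the decay $|E_{\beta,\beta+\gamma}(-z)|\lesssim(1+z)^{-1}$ coming from the Fox H-function asymptotics of Section \ref{S:Y}. Concretely: \textbf{(a)} the scaling gives $\int_{\R^d}Y^2(r,y)\,\ud y=Cr^{-\sigma}$, the constant being finite since $d<2\alpha$ under \eqref{E:Dalang}, whence $\mathcal{A}\le C\tau^{1-\sigma}$; \textbf{(b)} splitting the $\xi$-integral in $\mathcal{D}=c_d\int_0^{t'}\ud r\int_{\R^d}|\widehat{Y}(r,\xi)|^2|1-e^{ih\cdot\xi}|^2\,\ud\xi$ at $|\xi|=|h|^{-1}$, using $|1-e^{ih\cdot\xi}|^2\le\min(4,|h|^2|\xi|^2)$ and the scaling, gives $\mathcal{D}\le C|h|^{\min(\Theta-d,2)}$, with a logarithmic loss precisely when $\Theta-d=2$; \textbf{(c)} in $\mathcal{B}=c_d\int_0^{t'}\ud r\int_{\R^d}|\widehat{Y}(r+\tau,\xi)-\widehat{Y}(r,\xi)|^2\ud\xi$ I split the $r$-integral at $r=\tau$: for $r\ge\tau$ I write $\widehat{Y}(r+\tau,\xi)-\widehat{Y}(r,\xi)=\int_r^{r+\tau}\partial_\theta\widehat{Y}(\theta,\xi)\ud\theta$, use the identity $\partial_\theta\widehat{Y}(\theta,\xi)=\theta^{\beta+\gamma-2}E_{\beta,\beta+\gamma-1}\!\left(-\tfrac{\nu}{2}\theta^{\beta}|\xi|^{\alpha}\right)$, the scaling bound $\int_{\R^d}|\partial_\theta\widehat{Y}(\theta,\xi)|^2\ud\xi\le C\theta^{-\sigma-2}$, and Minkowski's integral inequality; for $r<\tau$ I use the triangle inequality $|\widehat{Y}(r+\tau,\xi)-\widehat{Y}(r,\xi)|\le|\widehat{Y}(r+\tau,\xi)|+|\widehat{Y}(r,\xi)|$ together with $\int_{\R^d}|\widehat{Y}(r,\xi)|^2\ud\xi=Cr^{-\sigma}$; both parts are $\le C\tau^{1-\sigma}$. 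Altogether $\Norm{I(t,x)-I(t',x')}_p^2\le C_{p,T}\left(|t-t'|^{1-\sigma}+|x-x'|^{\min(\Theta-d,2)}\right)$ up to an arbitrarily small logarithmic correction; raising this to the power $p/2$, invoking the Kolmogorov criterion, and letting $p\to\infty$ yields \eqref{E:Holder-I}.

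The step I expect to be most delicate is the kernel-increment analysis \textbf{(b)} and \textbf{(c)}. Near $r=0$ the kernel $Y(r,\cdot)$ and its time derivative are singular in $L^2(\R^d)$, of orders $r^{-\sigma/2}$ and $r^{-(\sigma+2)/2}$; consequently the bound through $\partial_\theta\widehat{Y}$ alone diverges and must be used only away from the singularity, the crude triangle inequality being used near it, with the two balanced at $r\sim\tau$. In \textbf{(b)} one must moreover track the borderline configurations ($2\alpha$ versus $d$, and versus $d+2$), where logarithmic factors appear---these are exactly what forces the ``$-$'' in the H\"older exponents. The inputs that make the argument go through are the uniform decay $|E_{\beta,b}(-z)|\lesssim(1+z)^{-1}$ and the differentiation identity $\partial_t\big(t^{b-1}E_{\beta,b}(-ct^{\beta})\big)=t^{b-2}E_{\beta,b-1}(-ct^{\beta})$, both supplied by the analysis of the Fox H-functions in Section \ref{S:Y} and the Appendix.

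Finally \eqref{E:Holder-u} follows from \eqref{E:Holder-I}, since $u=J_0+I$ and, under the hypothesis $\mu(\ud x)=f(x)\ud x$ with $f\in L^\infty(\R^d)$ (here $\beta\in(0,1]$ is used), $J_0(t,x)=(Z(t,\cdot)*f)(x)$ is at least as regular as the modulus of continuity just obtained for $I$ on $\R_+^*\times\R^d$. Indeed the estimates on $Z$ from Section \ref{S:Y} give $Z(t,\cdot),\nabla_xZ(t,\cdot),\partial_tZ(t,\cdot)\in L^1(\R^d)$ with norms bounded locally in $t>0$, hence $|J_0(t,x)-J_0(t',x')|\le\|f\|_\infty\big(\|\nabla_xZ(t,\cdot)\|_{L^1}|x-x'|+\sup_{\theta\in[t\wedge t',\,t\vee t']}\|\partial_\theta Z(\theta,\cdot)\|_{L^1}\,|t-t'|\big)$ on compact subsets of $\R_+^*\times\R^d$, so $J_0$ is locally Lipschitz there. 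Since $\tfrac12(1-\sigma)=\tfrac12\big(2(\beta+\gamma)-1-d\beta/\alpha\big)$, this is exactly \eqref{E:Holder-u}.
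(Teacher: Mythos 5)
Your proposal is correct in its overall architecture and it matches the paper's strategy: the paper's own proof is essentially a two-line citation, deferring \eqref{E:LpBdd} to the moment bound and \eqref{E:Holder-I} to Proposition \ref{P:G-SD} plus the standard BDG--Kolmogorov machinery of \cite{ChenDalang13Holder}; what you have done is re-derive the content of Proposition \ref{P:G-SD} from scratch. Three points of comparison. First, you invoke \eqref{E:MomUpBdd} from Theorem \ref{T2:ExUni} rather than \eqref{E:MomUp}; this is actually the better choice, since Theorem \ref{T:ExUni} requires \eqref{E:CaseA} or \eqref{E:CaseB} while the hypotheses here are only \eqref{E:Dalang} and \eqref{E:BddInit}. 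Second, your spatial-increment estimate (splitting at $|\xi|=|h|^{-1}$ with $|1-e^{ih\cdot\xi}|^2\le\min(4,|h|^2|\xi|^2)$) is exactly the proof of Proposition \ref{P:G-SD}(i), but your time-increment estimate is genuinely different: the paper expands the square and bounds the cross term from below using complete monotonicity of $E_{\beta,\beta+\gamma}$ via \eqref{E:E-CM}, which is why Proposition \ref{P:G-SD}(ii) carries the restriction $\gamma\le\Ceil{\beta}-\beta$; you instead differentiate $\widehat{Y}$ in time and split the $r$-integral at $r\sim\tau$. Your route avoids complete monotonicity, but note that the far-field part $\tau^2\int_\tau^{t'}r^{-\sigma-2}\,\ud r$ yields $C\tau^{1-\sigma}$ only when $\sigma\ge -1$ (it saturates at $\tau^{2}$ otherwise); this is automatic when $\beta+\gamma\le 1$, i.e., in exactly the regime where the paper's lemma applies, so the two methods cover the same ground. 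Third, you address the regularity of $J_0$ for \eqref{E:Holder-u} explicitly (the paper is silent on this), but the claim that $J_0$ is locally Lipschitz via $\nabla_xZ(t,\cdot)\in L^1(\R^d)$ is an over-claim: by Lemma \ref{L:HAt0}, $Z(1,\cdot)$ can have an $|x|^{\alpha-d}$ or logarithmic singularity at the origin (e.g., Case 2 with $\alpha=d=1$, which is admissible under \eqref{E:Dalang}), and then $\nabla_xZ(t,\cdot)\notin L^1$. The fix is standard and harmless: one only needs $J_0\in C_{\frac{1}{2}(1-\sigma),\,\frac{1}{2}\min(\Theta-d,2)}$, which follows from bounds of the form $\Norm{Z(t,\cdot-h)-Z(t,\cdot)}_{L^1(\R^d)}\le C_t|h|^{\theta}$ obtained by the same Plancherel/Fox--H asymptotics you already use for $Y$.
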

\begin{proof}
The bound  \eqref{E:LpBdd} is due to \eqref{E:BddInit} and \eqref{E:MomUp}.
The proof of \eqref{E:Holder-I} is straightforward under \eqref{E:LpBdd} and Proposition \ref{P:G-SD} (see
\cite[Remark \myRef{4.6}{RH:BddHolder}]{ChenDalang13Holder}).
\end{proof}

In order to use the moment bounds in \eqref{E:MomUp} and \eqref{E:SecMom-Lower},
we need some good estimate on the kernel function $\calK(t,x)$.
Following \cite{Chen14Time}, define the following {\it reference kernel functions}:
\begin{align}
\calG_{\alpha,\beta}(t,x) :=
\begin{cases}\label{E:calG}
\displaystyle
c_\beta \left(4\nu\pi t^\beta\right)^{-d/2}
\exp\left(-\frac{1}{4\nu}\left(t^{-\beta/2}|x|\right)^{\Floor{\beta}+1}\right),
&\text{if $\alpha=2$,}\\[1em]
\displaystyle
\frac{c_d \:t^{\beta/\alpha}}{\left(t^{2\beta/\alpha}+|x|^2\right)^{(d+1)/2}},
&\text{if $\alpha\in (0,2)$,}
\end{cases}
\end{align}
for $\beta\in(0,2)$ where $|x|^2 = x_1^2+\dots +x_d^2$,
$c_\beta=1$ if $\beta\in [1,2)$ and $c_\beta=2^{-(1+d)}\nu^{-d/2}\Gamma(d/2)/\Gamma(d)$ if $\beta\in (0,1)$,
and $c_d=\pi^{-(d+1)/2}\Gamma((d+1)/2)$.
Define also
\begin{align}\label{E:calG-L}
\ubar{\calG}_{\alpha,\beta}(t,x):=
\begin{cases}
\displaystyle
\left(\nu\pi
t^\beta\right)^{-d/2} \exp\left(-\frac{|x|^2}{\nu t^\beta}\right)
& \text{if $\alpha=2$},\\[0.8em]
\displaystyle
\frac{c_d \:t^{\beta/\alpha}}{\left(t^{2\beta/\alpha}+|x|^2\right)^{(d+1)/2}},
&\text{if $\alpha\in (0,2)$.}
\end{cases}
\end{align}
These reference kernels are nonnegative and
the constants $c_\beta$ and $c_d$ are chosen such that the
integration of these kernels on $\R^d$ is equal to one.

\begin{theorem}
\label{T:K-bounds}
Fix $\lambda\in\R$.
\begin{enumerate}[(1)]
 \item Under \eqref{E:Dalang} and \eqref{E:CaseA}, there are two nonnegative constants $C$ and $\Upsilon$ depending on
 $\alpha$, $\beta$, $\gamma$, and $\nu$, such that, for all $(t,x)\in\R_+^*\times\R^d$,
\begin{align}\label{E:calK-U}
\calK(t,x;\lambda)\le \frac{C}{t^{\sigma}} \; \calG_{\alpha,\beta}(t,x) \left(1+ t^{\sigma}
\exp\left(\lambda^{\frac{2}{1-\sigma}} \: \Upsilon\: t\right)\right),
\end{align}
where $\sigma$ is defined in \eqref{E:sigma};
\item Under \eqref{E:Dalang}, \eqref{E:CaseA} and the first two cases in \eqref{E:4cases},
there are two nonnegative constants
$\ubar{C}$ and $\ubar{\Upsilon}$ depending on  $\alpha$, $\beta$, $\gamma$, and $\nu$,
such that, for all $(t,x)\in\R_+^*\times\R^d$,
\begin{align}
\label{E:calK-L}
\calK(t,x;\lambda)\ge \ubar{C} \; \ubar{\calG}_{\alpha,\beta}(t,x)
 \exp\left(\lambda^{\frac{2}{1-\sigma}} \:\ubar{\Upsilon}\: t\right).
\end{align}
\end{enumerate}
\end{theorem}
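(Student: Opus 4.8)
The plan is to reduce everything to sharp two-sided estimates on $Y^2(t,x)$ and then propagate them through the series $\calK(t,x;\lambda)=\sum_{n\ge0}\lambda^{2(n+1)}\calL_n(t,x)$ by induction on the number of space-time convolutions. First I would record, from the Fox $H$-function representation of $Y$ established in Section \ref{S:Y} together with Lemma \ref{L:HAt0} and Remark \ref{R:YZero}, that under \eqref{E:Dalang} and \eqref{E:CaseA} one has a bound of the form
\[
Y^2(t,x)\le \frac{C}{t^{\sigma}}\,\calG_{\alpha,\beta}(t,x),\qquad (t,x)\in\R_+^*\times\R^d,
\]
where the prefactor $t^{-\sigma}$ with $\sigma=2(1-\beta-\gamma)+\beta d/\alpha$ is dictated by the scaling $Y(t,x)=t^{\beta+\gamma-1-\beta d/\alpha}\,Y(1,t^{-\beta/\alpha}x)$ (so that $Y^2$ scales like $t^{2(\beta+\gamma-1)-\beta d/\alpha}$ times a function of $t^{-\beta/\alpha}x$) and the convergence of the spatial integral, i.e. $\sigma<1$, is exactly \eqref{E:sigma<1}. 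Likewise, under the first two cases of \eqref{E:4cases} the nonnegativity of $Y$ from Theorem \ref{T:NonY} and the lower estimate on the Fox $H$-function near its leading behaviour give $Y^2(t,x)\ge \ubar C\,\ubar{\calG}_{\alpha,\beta}(t,x)$ on compact time intervals, which is what feeds the lower bound \eqref{E:calK-L}.

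The core of the argument is then a semigroup-type estimate for the reference kernels: one shows that there is a constant $a=a(\alpha,\beta,\gamma,\nu)$ such that
\[
\left(t_1^{-\sigma}\calG_{\alpha,\beta}(t_1,\cdot)\star t_2^{-\sigma}\calG_{\alpha,\beta}(t_2,\cdot)\right)(x)
\le a\,B(1-\sigma,1-\sigma)^{-1}\cdot\frac{(t_1+t_2)^{1-2\sigma}}{(t_1+t_2)^{1-\sigma}}\,(t_1+t_2)^{-\sigma}\calG_{\alpha,\beta}(t_1+t_2,x),
\]
i.e. that convolving two copies of $\calG_{\alpha,\beta}$ reproduces $\calG_{\alpha,\beta}$ at the summed time with a Beta-function factor coming from the $s\mapsto s^{-\sigma}(t-s)^{-\sigma}$ time integral. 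This is the key sub-additivity/semigroup property; for $\alpha=2$ it uses the Gaussian-type convolution inequality for the stretched-exponential kernels $\exp(-c|x|^{\Floor{\beta}+1})$, and for $\alpha\in(0,2)$ it uses the convolution structure of the Poisson-type kernels. Iterating this bound $n$ times gives
\[
\calL_n(t,x)\le \frac{C^{n+1}\,\Gamma(1-\sigma)^{n+1}}{\Gamma((n+1)(1-\sigma))}\,t^{(n+1)(1-\sigma)-1}\,t^{-\sigma}\calG_{\alpha,\beta}(t,x),
\]
and summing $\sum_n\lambda^{2(n+1)}\calL_n$ produces a Mittag-Leffler function $E_{1-\sigma,\,1-\sigma}\!\big(C\Gamma(1-\sigma)\lambda^2 t^{1-\sigma}\big)$; bounding this Mittag-Leffler function by $C(1+t^{\sigma}\exp(\Upsilon\lambda^{2/(1-\sigma)}t))$ via its known exponential asymptotics (the exponent $2/(1-\sigma)$ is forced by $E_{1-\sigma,1-\sigma}(z)\sim\exp(z^{1/(1-\sigma)})$) yields \eqref{E:calK-U}. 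The lower bound \eqref{E:calK-L} follows the same scheme: keep only the $n$-th term appropriate to the time scale, use the matching lower convolution inequality for $\ubar{\calG}_{\alpha,\beta}$ (valid on the relevant compact region because $\ubar{\calG}$ is, up to constants, comparable to $\calG$ there), and recognise the resulting series as a Mittag-Leffler function bounded below by an exponential.

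The main obstacle I expect is the semigroup inequality for $\calG_{\alpha,\beta}$ in the case $\alpha=2$, $\beta\in(0,1)$: the kernel involves $\exp(-c\,(t^{-\beta/2}|x|)^{\Floor{\beta}+1})=\exp(-c\,t^{\beta/2}\text{-rescaled }|x|)$, i.e. a genuine exponential (not Gaussian) tail, so the convolution bound is not simply the heat-semigroup identity and must be proved by a careful splitting of the $y$-integral into $|y|\le|x|/2$ and its complement, combined with the time-integral $\int_0^t s^{-\sigma-d\beta/2\cdot\mathbf{1}}(t-s)^{\cdots}$ bookkeeping to track that the constant $a$ does not blow up. A secondary difficulty is matching the two sides in the fast-diffusion range $\beta\in(1,2)$ (where $\Floor{\beta}+1=2$, so $\calG$ is Gaussian-type but the time exponents change sign) and making sure the lower bound's region of validity is nonempty; this is handled by restricting to the diagonal-like region $|x|\lesssim t^{\beta/\alpha}$ where $\calG$ and $\ubar{\calG}$ are comparable, which suffices since that is exactly where the moment lower bounds in Theorem \ref{T:ExUni}(3) are applied.
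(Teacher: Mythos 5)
Your strategy coincides with the paper's: the proof of Theorem \ref{T:K-bounds} consists precisely of Propositions \ref{P:ST-Con} and \ref{P:ST-Con2}, which establish the two-sided pointwise bounds $Y^2(t,x)\le C_0\,t^{-\sigma}\calG_{\alpha,\beta}(t,x)$ and $Y^2(t,x)\ge C_0\,t^{-\sigma}\ubar{\calG}_{\alpha,\beta}(t,x)$ together with the sub-/super-semigroup inequalities for the reference kernels, followed by the convolution iteration
$\calL_n(t,x)\le C^{n+1}\Gamma(1-\sigma)^{n+1}\Gamma((n+1)(1-\sigma))^{-1}\,t^{(n+1)(1-\sigma)-1}\,\calG_{\alpha,\beta}(t,x)$
and the Mittag--Leffler summation, which the paper imports from \cite[Proposition 5.2]{Chen14Time} (the same Beta-function induction appears in Lemma \ref{L:Ht}). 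One slip: your displayed iterate carries a spurious extra factor $t^{-\sigma}$; at $n=0$ it reads $C\,t^{-2\sigma}\calG$ instead of the hypothesis $C_0\,t^{-\sigma}\calG$, and with that factor the induction does not close --- the correct time exponent is $t^{(n+1)(1-\sigma)-1}$ alone, since $\int_0^t(t-s)^{(n+1)(1-\sigma)-1}s^{-\sigma}\,\ud s$ produces exactly $t^{(n+2)(1-\sigma)-1}$ times a ratio of Gamma functions.

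The genuine gap is in part (2). The theorem asserts \eqref{E:calK-L} for \emph{all} $(t,x)\in\R_+^*\times\R^d$, so a lower bound on $Y^2$ valid only on compact time intervals, or a convolution inequality valid only on a diagonal region $|x|\lesssim t^{\beta/\alpha}$ where "$\calG$ and $\ubar{\calG}$ are comparable", proves a strictly weaker statement. What you are missing is the reason the paper uses two \emph{different} reference kernels: for $\alpha=2$ the lower kernel $\ubar{\calG}_{2,\beta}$ is a genuine Gaussian, which decays strictly faster at infinity than $Y(1,\cdot)^2\sim |x|^{2a}e^{-2b|x|^{2/(2-\beta)}}$ (note $2/(2-\beta)<2$ for $\beta\in(0,1)$), so $\sup_{y}\ubar{\calG}_{2,\beta}(1,y)/Y(1,y)^2<\infty$, and the scaling \eqref{E:YScale} then upgrades this to $Y^2(t,x)\ge C_0\,t^{-\sigma}\ubar{\calG}_{2,\beta}(t,x)$ for all $(t,x)$ --- the time exponents cancel exactly because $\sigma=2(1-\beta-\gamma)+\beta d/\alpha$. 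The factor $t^{-\sigma}$ must be retained on the lower side as well, since it is what drives the same Beta-function recursion and hence the exponential $\exp(\ubar{\Upsilon}\lambda^{2/(1-\sigma)}t)$ in \eqref{E:calK-L}. The super-semigroup inequality for $\ubar{\calG}$ is likewise global: it is the exact Gaussian (resp.\ Poisson) semigroup identity after the time change $t\mapsto t^{\beta}$ (resp.\ $t\mapsto t^{\beta/\alpha}$), combined with the elementary bounds $t^{\theta}\le s^{\theta}+(t-s)^{\theta}\le 2^{1-\theta}t^{\theta}$ for $\theta\in(0,1]$. No comparability region between $\calG$ and $\ubar{\calG}$ enters anywhere; the upper kernel needs heavier tails than $Y^2$ and the lower kernel lighter ones, and that asymmetry is the whole design of \eqref{E:calG} versus \eqref{E:calG-L}.
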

\begin{proof}
This theorem is due to Propositions \ref{P:ST-Con}, \ref{P:ST-Con2} and \cite[Proposition 5.2]{Chen14Time}.
\end{proof}

The last set of results are the weak intermittency.

\begin{theorem}[Weak intermittency]
\label{T:Weak-S}
Suppose that \eqref{E:Dalang} holds and the initial data satisfy \eqref{E:BddInit}.
\begin{enumerate}[(1)]
 \item If $\rho$ satisfies \eqref{E:LinGrow}, then
for some finite constant $C>0$,
\[
\overline{m}_p\le\:
C \Lip_\rho^{\frac{2}{2(\beta+\gamma)-1-d\beta/\alpha}}
p^{1+\frac{1}{2(\beta+\gamma)-1-d\beta/\alpha}},\quad\text{for all $p\ge 2$ even.}
\]
\item Suppose that the initial data are uniformly bounded from below,
i.e., $\mu(\ud x)=f(x)\ud x$ and $f(x)\ge c>0$ for all $x\in\R^d$.
If $\rho$ satisfies \eqref{E:lingrow} with $|c|+|\vip|\ne 0$,
then under \eqref{E:Dalang}, \eqref{E:CaseA} and
the first two cases in \eqref{E:4cases}, there is some finite constant $C'>0$ such that
\[
\underline{m}_p \ge C' \lip_\rho^{\frac{2}{2(\beta+\gamma)-1-d\beta/\alpha}} p
 ,\quad\text{for all $p\ge 2$.}
\]
\end{enumerate}
\end{theorem}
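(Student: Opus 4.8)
The plan is to derive both estimates directly from the $p$-th moment bounds of Theorem~\ref{T:ExUni}, fed with the kernel bounds of Theorem~\ref{T:K-bounds}, and then to convert these pointwise-in-$t$ bounds into statements about the exponents $\overline m_p$, $\underline m_p$. Two facts are used throughout: $\int_{\R^d}\calG_{\alpha,\beta}(t,y)\,\ud y=\int_{\R^d}\ubar{\calG}_{\alpha,\beta}(t,y)\,\ud y=1$ for all $t>0$ (the reference kernels are probability densities in $x$), and $1-\sigma=2(\beta+\gamma)-1-\tfrac{d\beta}{\alpha}>0$ under \eqref{E:Dalang}, see \eqref{E:sigma<1}. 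Since the initial data here are spatially homogeneous, $J_0(t,x)$ does not depend on $x$ and grows at most polynomially in $t$ (it is constant when $\beta\in(0,1]$); hence any factor of the form $J_0^{2}$, $t^{1-\sigma}$, $\lambda^{-2/(1-\sigma)}$, etc., produced along the way is sub-exponential and disappears under $\lim_{t\to\infty}\tfrac1t\log(\cdot)$. Below, $\mathrm{poly}(t)$ denotes a generic factor of at most polynomial growth in $t$.

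\textbf{Part (1) (upper bound).} Since each $\calL_n\ge 0$, the kernel $\calK(t,x;\lambda)$ is nondecreasing in $|\lambda|$, so $\overline{\calK}\le\widehat{\calK}_2\le\widehat{\calK}_p$ for every even $p\ge 2$, and \eqref{E:MomUp} gives in all cases $\Norm{u(t,x)}_p^2\le 2J_0^2+\bigl(\left[\Vip^2+2J_0^2\right]\star\widehat{\calK}_p\bigr)(t,x)$. I would then insert \eqref{E:calK-U} with $\lambda=4\sqrt p\,\Lip_\rho$, integrate in the space variable using $\int_{\R^d}\calG_{\alpha,\beta}=1$, and integrate in the time variable using $\int_0^t s^{-\sigma}\,\ud s=\tfrac{t^{1-\sigma}}{1-\sigma}<\infty$ (valid since $\sigma<1$) together with $\int_0^t e^{\kappa s}\,\ud s\le\kappa^{-1}e^{\kappa t}$. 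This produces, for large $t$,
\[
\Norm{u(t,x)}_p^2\le\mathrm{poly}(t)\,\exp\!\Bigl(\Upsilon\,(16\,p\,\Lip_\rho^2)^{\frac1{1-\sigma}}\,t\Bigr).
\]
Raising this to the power $p/2$ and taking $\tfrac1t\log\E[|u(t,x)|^p]$, the polynomial factor dies and one is left with $\overline m_p\le\tfrac p2\,\Upsilon\,(16\,p\,\Lip_\rho^2)^{1/(1-\sigma)}=C\,\Lip_\rho^{2/(2(\beta+\gamma)-1-d\beta/\alpha)}\,p^{\,1+1/(2(\beta+\gamma)-1-d\beta/\alpha)}$, because $\tfrac1{1-\sigma}=\tfrac1{2(\beta+\gamma)-1-d\beta/\alpha}$.

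\textbf{Part (2) (lower bound).} First reduce to $p=2$: Jensen's inequality for the convex map $x\mapsto x^{p/2}$, $p\ge2$, gives $\E[|u(t,x)|^p]\ge\bigl(\E[|u(t,x)|^2]\bigr)^{p/2}$, hence $\underline m_p\ge\tfrac p2\,\underline m_2$, so it suffices to show $\underline m_2\ge C''\,\lip_\rho^{2/(1-\sigma)}$ for some $C''>0$. Here $\beta\in(0,1]$ (the first two cases of \eqref{E:4cases}), so $J_0=Z(t,\cdot)*f$ with $Z$ nonnegative of total mass one and $f\ge c>0$; thus $\vip^2+J_0^2\ge\vip^2+c^2=:\theta^2>0$. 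Dropping the leading term in \eqref{E:SecMom-Lower},
\[
\Norm{u(t,x)}_2^2\ge\theta^2\int_0^t\!\!\int_{\R^d}\underline{\calK}(s,y)\,\ud y\,\ud s .
\]
Inserting \eqref{E:calK-L} with $\lambda=\lip_\rho$ and using $\int_{\R^d}\ubar{\calG}_{\alpha,\beta}=1$ gives $\int_{\R^d}\underline{\calK}(s,y)\,\ud y\ge\ubar{C}\,e^{\ubar{\Upsilon}\,\lip_\rho^{2/(1-\sigma)}s}$, and integrating in $s$ yields $\Norm{u(t,x)}_2^2\ge\theta^2\ubar{C}\,(\ubar{\Upsilon}\lip_\rho^{2/(1-\sigma)})^{-1}\bigl(e^{\ubar{\Upsilon}\lip_\rho^{2/(1-\sigma)}t}-1\bigr)$. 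Taking $\tfrac1t\log$ and letting $t\to\infty$ gives $\underline m_2\ge\ubar{\Upsilon}\,\lip_\rho^{2/(1-\sigma)}$, hence $\underline m_p\ge\tfrac{\ubar{\Upsilon}}{2}\,\lip_\rho^{2/(1-\sigma)}\,p$, which is the assertion with $C'=\ubar{\Upsilon}/2$.

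\textbf{Main obstacle.} There is no substantial analytic obstacle left in this argument: the hard work is entirely contained in Theorems~\ref{T:ExUni} and~\ref{T:K-bounds} (that is, in the proofs of the moment inequalities and the kernel bounds), and what remains is a change of viewpoint from pointwise-in-$(t,x)$ bounds to the exponential rates $\overline m_p,\underline m_p$. The only two points that need a little care are: (i) checking that every $t$-dependent prefactor generated along the way is at most polynomial, so that it vanishes under $\lim_{t\to\infty}\tfrac1t\log(\cdot)$ --- routine, but worth stating since $J_0$ may grow linearly in $t$ in the fast-diffusion regime; and (ii) in Part (2), the strict positivity $\vip^2+J_0^2\ge\theta^2>0$, which rests on the nonnegativity and unit total mass of the fundamental solution $Z$ in the slow-diffusion regime, properties established in Section~\ref{S:Y}.
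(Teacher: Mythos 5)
Your proof is correct and follows essentially the same route as the paper: the upper bound comes from \eqref{E:MomUp} combined with the kernel bound \eqref{E:calK-U} at $\lambda=4\sqrt p\,\Lip_\rho$, and the lower bound from \eqref{E:SecMom-Lower} combined with \eqref{E:calK-L} and the monotonicity $\Norm{u}_p\ge\Norm{u}_2$; you merely spell out the space--time integration of the reference kernels that the paper leaves implicit. The only caveat (shared with the paper's own one-line proof) is that part (1) is stated under \eqref{E:Dalang} and \eqref{E:BddInit} alone, whereas \eqref{E:MomUp} and \eqref{E:calK-U} are established under the additional hypothesis \eqref{E:CaseA}; to get the full generality of part (1) one should instead invoke \eqref{E:MomUpBdd} from Theorem \ref{T2:ExUni} (i.e.\ Lemma \ref{L:Ht}), which yields the same exponential rate in $p$ and $t$.
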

\begin{proof}
By \eqref{E:MomUp}, \eqref{E:calK-U} and \eqref{E:Csharp},
\[
\Norm{u(t,x)}_p^2 \le \widehat{C}_t^2 + C \: t^{-\sigma} \left(\Vip^2+2 \widehat{C}_t^2\right)
\Big(1+t^{\sigma} \exp\Big(\Upsilon \Lip_\rho^{\frac{2}{1-\sigma}} p^{\frac{1}{1-\sigma}}\:
t\Big)\Big).
\]
Then increase the power by a factor $p/2$. As for the lower bound, it holds that
\[
\Norm{u(t,x)}_p^2\ge
\Norm{u(t,x)}_2^2
\ge
c^2 + \ubar{C}  \left(\vip^2 + c^2\right)
 \exp\left( \lip_\rho^{\frac{2}{1-\sigma}} \ubar{\Upsilon} t
\right),
\]
thanks to \eqref{E:SecMom-Lower} and \eqref{E:calK-L}.
\end{proof}
\section{Fundamental solutions}\label{S:Y}

\begin{theorem}\label{T:PDE}
For $\alpha\in (0,2]$, $\beta\in (0,2)$ and $\gamma\ge 0$, the solution to
\begin{align} \label{E:PDE}
\begin{cases}
\displaystyle \left(\partial^\beta + \frac{\nu}{2} (-\Delta)^{\alpha/2} \right) u(t,x)= I_t^\gamma\left[f(t,x)\right],&\qquad t>0,\: x\in\RR^d, \\[0.5em]
\displaystyle \left.\frac{\partial^k}{\partial t^k} u(t,x)\right|_{t=0}=u_k(x), &\qquad 0\le k\le \Ceil{\beta}-1, \:\: x\in\RR^d,
\end{cases}
\end{align}
is
\begin{align}\label{E:Duhamel}
 u(t,x) = J_0(t,x) +
\int_0^t \ud s \int_{\RR^d} \ud y\: f(s,y) \: \DRL{+}{\Ceil{\beta}-\beta-\gamma} Z(t-s,x-y),
\end{align}
where 
\begin{align}\label{E:J0}
J_0(t,x):=
\sum_{k=0}^{\Ceil{\beta}-1}\int_{\RR^d} u_{\Ceil{\beta}-1-k}(y) \partial^k Z(t,x-y) \ud y
\end{align}
is the solution to the homogeneous equation and 
\begin{align}\label{E:Zab}
 Z_{\alpha,\beta,d}(t,x):= \pi^{-d/2} t^{\Ceil{\beta}-1} |x|^{-d}
 \FoxH{2,1}{2,3}{\frac{ |x|^\alpha}{2^{\alpha-1}\nu t^\beta}}{(1,1),\:(\Ceil{\beta},\beta)}
 {(d/2,\alpha/2),\:(1,1),\:(1,\alpha/2)},
\end{align}
\begin{align}
\label{E:Yab}
\hspace{-0.8 em}
Y_{\alpha,\beta,\gamma,d}(t,x) := \DRL{+}{\Ceil{\beta}-\beta-\gamma} Z_{\alpha,\beta,d}(t,x)
 = \pi^{-d/2} |x|^{-d}t^{\beta+\gamma-1}
 \FoxH{2,1}{2,3}{\frac{ |x|^\alpha}{2^{\alpha-1}\nu t^\beta}}
 {(1,1),\:(\beta+\gamma,\beta)}{(d/2,\alpha/2),\:(1,1),\:(1,\alpha/2)}
\end{align}
and, if $\beta\in(1,2)$,
\begin{align}\label{E:Z*ab}
 Z^*_{\alpha,\beta,d}(t,x):=\frac{\partial}{\partial t} Z_{\alpha,\beta,d}(t,x)=
 \pi^{-d/2} |x|^{-d}
 \FoxH{2,1}{2,3}{\frac{ |x|^\alpha}{2^{\alpha-1}\nu t^\beta}}{(1,1),\:(1,\beta)}
 {(d/2,\alpha/2),\:(1,1),\:(1,\alpha/2)}.
\end{align}
Moreover,
\begin{align}
 \label{E:FZ}
\calF Z_{\alpha,\beta,d}(t,\cdot)(\xi) &= t^{\Ceil{\beta}-1} E_{\beta,\Ceil{\beta}}(-2^{-1}\nu t^\beta |\xi|^\alpha),\\
 \calF Y_{\alpha,\beta,\gamma,d}(t,\cdot)(\xi)& = t^{\beta+\gamma-1} E_{\beta,\beta+\gamma}(-2^{-1}\nu t^\beta |\xi|^\alpha),
 \label{E:FY}\\
 \calF Z^*_{\alpha,\beta,d}(t,\cdot)(\xi) &= E_{\beta}(-2^{-1}\nu t^\beta |\xi|^\alpha), \quad \text{if $\beta\in (1,2)$}.
  \label{E:FZ*}
\end{align}
\end{theorem}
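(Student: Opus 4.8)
The plan is to work on the Fourier side throughout, since the Fourier transforms \eqref{E:FZ}–\eqref{E:FZ*} essentially encode the PDE, and then invert to obtain the $H$-function representations \eqref{E:Zab}–\eqref{E:Z*ab}. First I would apply the spatial Fourier transform to \eqref{E:PDE}, converting the fractional Laplacian into multiplication by $2^{-1}\nu|\xi|^\alpha$ and reducing the problem to the scalar time-fractional ODE
\[
\partial^\beta \widehat u(t,\xi) + \tfrac{\nu}{2}|\xi|^\alpha\,\widehat u(t,\xi) = I_t^\gamma\bigl[\widehat f(t,\xi)\bigr],\qquad \left.\tfrac{\partial^k}{\partial t^k}\widehat u\right|_{t=0}=\widehat{u_k}(\xi),\quad 0\le k\le\Ceil\beta-1.
\]
The standard Laplace-transform solution of this Caputo-type equation (see \cite{Podlubny99FDE}) gives, for the homogeneous part, $\sum_{k}\widehat{u_k}(\xi)\,t^k E_{\beta,k+1}(-\tfrac{\nu}{2}t^\beta|\xi|^\alpha)$, and for the forced part a convolution in time against $t^{\beta-1}E_{\beta,\beta}(-\tfrac\nu2 t^\beta|\xi|^\alpha)$; composing with $I_t^\gamma$ replaces the exponent $\beta$ by $\beta+\gamma$ in the second Mittag-Leffler index, yielding $t^{\beta+\gamma-1}E_{\beta,\beta+\gamma}(-\tfrac\nu2 t^\beta|\xi|^\alpha)$. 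Matching this to the claimed formulas: $\calF Z$ should be $t^{\Ceil\beta-1}E_{\beta,\Ceil\beta}(-\tfrac\nu2 t^\beta|\xi|^\alpha)$, and one checks that $\partial_t\bigl[t^{\Ceil\beta-1}E_{\beta,\Ceil\beta}(-z t^\beta)\bigr]=t^{\Ceil\beta-2}E_{\beta,\Ceil\beta-1}(-zt^\beta)$ using the differentiation identity for Mittag-Leffler functions, which for $\beta\in(1,2)$ (so $\Ceil\beta=2$) gives $E_\beta(-zt^\beta)$, matching \eqref{E:FZ*}; similarly the Riemann-Liouville operator $\DRL{+}{\Ceil\beta-\beta-\gamma}$ acting on $t^{\Ceil\beta-1}E_{\beta,\Ceil\beta}$ shifts the parameters to produce \eqref{E:FY}. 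The key identity here is $\DRL{+}{\mu}\bigl[t^{\nu-1}E_{\beta,\nu}(\lambda t^\beta)\bigr]=t^{\nu-\mu-1}E_{\beta,\nu-\mu}(\lambda t^\beta)$, valid for the relevant ranges, which one derives termwise from \eqref{E:Mittag-Leffler} and the formula $\DRL{+}{\mu}t^{a}=\Gamma(a+1)t^{a-\mu}/\Gamma(a-\mu+1)$.

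Next I would invert the Fourier transform to obtain the $H$-function forms. Writing $\calF Z(t,\cdot)(\xi)=t^{\Ceil\beta-1}E_{\beta,\Ceil\beta}(-\tfrac\nu2 t^\beta|\xi|^\alpha)$ and using the Mellin–Barnes representation of $E_{\beta,\Ceil\beta}$ recorded at the end of Section \ref{S:Notation}, together with the standard formula for the radial inverse Fourier transform in $\R^d$ (i.e. the Hankel-type integral $\int_0^\infty r^{d-1}g(r)\,{}_0F_1(\,;d/2;-r^2|x|^2/4)\,dr$ expressed as an $H$-function via the multiplication/Mellin-convolution rules for $H$-functions), one assembles the $\FoxH{2,1}{2,3}{\cdot}{\cdot}{\cdot}$ displayed in \eqref{E:Zab}. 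The parameters $(d/2,\alpha/2),(1,1),(1,\alpha/2)$ in the lower row are exactly what the inverse Fourier transform of $|\xi|^\alpha$-type symbols in dimension $d$ produces; the upper row $(1,1),(\Ceil\beta,\beta)$ comes from the Mellin–Barnes representation of the Mittag-Leffler function. I would cite \cite[Ch.~2]{KilbasSaigo04H} for the requisite $H$-function manipulations (Mellin transform, cosine transform, and the reduction formulas) rather than reproduce them. The formulas for $Y$ and $Z^*$ then follow either by the same inversion applied to \eqref{E:FY}, \eqref{E:FZ*}, or, more economically, by applying $\DRL{+}{\Ceil\beta-\beta-\gamma}$ and $\partial_t$ directly to the $H$-function \eqref{E:Zab} using the differentiation rules for $H$-functions (differentiation in $t$ shifts the $(\Ceil\beta,\beta)$ parameter pair), which is the content of the chain of equalities asserted in \eqref{E:Yab} and \eqref{E:Z*ab}.

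Finally I would verify the Duhamel representation \eqref{E:Duhamel}: transform it, obtaining $\widehat u(t,\xi)=\widehat{J_0}(t,\xi)+\int_0^t \widehat f(s,\xi)\,\DRL{+}{\Ceil\beta-\beta-\gamma}\calF Z(t-s,\cdot)(\xi)\,ds$, and recognize the kernel as $\calF Y(t-s,\cdot)(\xi)=(t-s)^{\beta+\gamma-1}E_{\beta,\beta+\gamma}(-\tfrac\nu2(t-s)^\beta|\xi|^\alpha)$; then check that this indeed solves the transformed ODE by applying $\partial^\beta+\tfrac\nu2|\xi|^\alpha$ and using that $t\mapsto t^{\beta+\gamma-1}E_{\beta,\beta+\gamma}(-zt^\beta)$ is, up to the $I_t^\gamma$ smoothing, the Green's function of the operator $\partial^\beta+z$. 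The initial-condition check uses that $\partial^k J_0(0,\cdot)=u_k$, which reduces to the asymptotics $E_{\beta,\Ceil\beta}(0)=1/\Gamma(\Ceil\beta)$ and the termwise behaviour of $\partial^k[t^{\Ceil\beta-1}E_{\beta,\Ceil\beta}(-zt^\beta)]$ at $t=0$. The main obstacle I anticipate is not the Fourier-side ODE — that is classical — but the careful bookkeeping in the $H$-function inversion: keeping track of the exact parameter tuples, the admissibility conditions for the Mellin–Barnes integrals (which is where Dalang's condition and the ranges of $\alpha,\beta,\gamma$ enter to guarantee convergence), and justifying the interchange of the $\xi$-integral with the Mellin contour. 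I would handle convergence issues by first establishing everything for $\xi$ in a regime where all integrals converge absolutely and then extending by analytic continuation in the relevant parameters, deferring the technical $H$-function lemmas to the Appendix as the paper indicates.
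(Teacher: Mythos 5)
Your proposal follows essentially the same route as the paper: spatial Fourier transform, Laplace transform in time for the Caputo ODE, the Mittag--Leffler Laplace pair to identify \eqref{E:FZ}--\eqref{E:FZ*}, and then the Riemann--Liouville differentiation identity $\DRL{+}{\mu}\bigl[t^{\nu-1}E_{\beta,\nu}(\lambda t^\beta)\bigr]=t^{\nu-\mu-1}E_{\beta,\nu-\mu}(\lambda t^\beta)$ together with H-function differentiation rules to pass from $Z$ to $Y$ and $Z^*$. The only difference is that the paper outsources the explicit Fourier inversion yielding \eqref{E:Zab} and \eqref{E:Z*ab} to the $\gamma=0$ case treated in \cite{CHHH15Time} (and \cite[Theorem 2.8]{KilbasSaigo04H} for the fractional derivative of the H-function), whereas you sketch the Mellin--Barnes/Hankel-transform bookkeeping directly; both are sound.
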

This theorem is proved in Section \ref{SS:PDE}.
For convenience, we will use the following notation
\begin{align}\label{E:YZ}
Y(t,x)&:=Y_{\alpha,\beta,\gamma,d}(t,x) = \DRL{+}{\Ceil{\beta}-\beta-\gamma} Z(t,x),\\
Z^*(t,x)&:=Z_{\alpha,\beta,d}^*(t,x) = \frac{\partial}{\partial t} Z(t,x),\quad \text{if $\beta\in (1,2)$}.
\label{E:Z*Z}
\end{align}
A direct consequence of expression \eqref{E:Yab} is the following scaling property
\begin{align}\label{E:YScale}
Y(t,x)=t^{\beta+\gamma-1-d\beta/\alpha} Y\left(1,t^{-\beta/\alpha} x\right).
\end{align}

\begin{remark}
By choosing $\alpha=2$, $d=1$ and $\beta$ arbitrarily close to $2$,
one can see that the first condition in \eqref{E:Dalang2} suggests the condition $\gamma>-1$.
However, when $\gamma\in (-1,0)$, one needs to specify another initial condition, namely, $\left.I_t^{1-\gamma} f(t,x)\right|_{t=0}$.
For example (Example 4.1 in \cite[p.138]{Podlubny99FDE}), the differential equation
\[
\DRL{+}{1/2}g(t)+g(t)=0, \qquad (t>0);  \qquad \left. I^{1/2}_t g(t)\right|_{t=0}=C,
\]
is solved by $g(t)=C\left(\frac{1}{\sqrt{\pi t}}-e^t\Erfc(\sqrt{t})\right)$.
This initial condition is obscure when the driving term $f$ becomes the multiplicative noise $\rho(u(t,x))\W(t,x)$.
Hence, throughout this paper, we assume $\gamma\ge 0$.
\end{remark}

\bigskip
This following lemma gives the asymptotics of fundamental solutions $Z_{\alpha,\beta,d}(1,x)$, $Y_{\alpha,\beta,\gamma,d}(1,x)$, and $Z_{\alpha,\beta,d}^*(1,x)$ at $x=0$
by choosing suitable values for $\eta$:
\[
\eta=
\begin{cases}
 \Ceil{\beta} &\text{in case of $Z$,}\cr
 \beta+\gamma &\text{in case of $Y$,} \cr
 1 &\text{in case of $Z^*$, when $\beta\in (1,2)$.}\cr
\end{cases}
\]
\begin{lemma}\label{L:HAt0}
Suppose $\alpha\in(0,2]$, $\beta\in(0,2)$, $\eta\in\R$, and $d\in\bbN$.
Let
\[
g(x)=x^{-d}\FoxH{2,1}{2,3}{x^\alpha}{(1,1),\;(\eta,\beta)}{(d/2,\alpha/2),\:(1,1),\:(1,\alpha/2)},\quad x>0.
\]
Then as $x\rightarrow 0_+$, the followings hold:
\[
g(x)
=
\begin{cases}
\frac{\Gamma(d-\alpha)/2}{\Gamma(\eta-\beta)\Gamma(\alpha/2)} \: x^{\alpha-d} + O(x^{\min(2\alpha-d,0)}) & \text{if $\eta\ne\beta$ and $d>\alpha$}: \hfill\text{Case 1,}\\[0.5em]
-\frac{\alpha}{\Gamma(\eta-\beta)\Gamma(1+d/2)}\: \log x +O(1) & \text{if $\eta\ne\beta$ and $d=\alpha$}: \hfill\text{Case 2,}\\[0.5em]
\frac{2}{\alpha} \frac{\Gamma(1-d/\alpha)\Gamma(d/\alpha)}{\Gamma(\eta-d\beta/\alpha)\Gamma(d/2)} +O(x^{\alpha-d}) & \text{if $\eta\ne\beta$ and $d<\alpha$}: \hfill\text{Case 3,}\\[0.5em]
\frac{2\Gamma(d/\alpha)}{\alpha\Gamma(d/2)} + O(x^2)& \text{if $\beta=\eta=1$}: \hfill\text{Case 4,}\\[0.5em]
 -\frac{\Gamma((d-2\alpha)/2)}{\Gamma(-\beta)\Gamma(\alpha)} \:x^{2\alpha-d} + O(x^{\min(3\alpha-d,0)})& \text{if $\beta=\eta\ne 1$ and $d/\alpha>2$}: \hfill\text{Case 5,}\\[0.5em]
 \frac{2\alpha}{\Gamma(-\beta)\Gamma(1+d/2)}\: \log x + O(x^{\alpha})& \text{if $\beta=\eta\ne 1$ and $d/\alpha=2$}: \hfill\text{Case 6,}\\[0.5em]
\frac{2}{\alpha} \frac{\Gamma(1-d/\alpha)\Gamma(d/\alpha)}{\Gamma(\beta(1-d/\alpha))\Gamma(d/2)}   + O(x^{2\alpha-d})& \text{if $\beta=\eta\ne 1$ and $d/\alpha\in (1,2)$}:\quad \hfill\text{Case 7,}\\[0.5em]
  -\frac{\beta}{\Gamma(1+d/2)} + O(x^{\alpha})& \text{if $\beta=\eta\ne 1$ and $d/\alpha=1$}: \hfill\text{Case 8,}\\[0.5em]
\frac{2}{\alpha} \frac{\Gamma(1-d/\alpha)\Gamma(d/\alpha)}{\Gamma(\beta(1-d/\alpha))\Gamma(d/2)}+ O(x^{\alpha})& \text{if $\beta=\eta\ne 1$ and $d/\alpha<1$}: \hfill\text{Case 9,}
\end{cases}
\]
where all the coefficients of the leading terms are finite and nonvanishing.
\end{lemma}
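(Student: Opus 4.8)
The plan is to derive all nine asymptotic expansions from a single Mellin–Barnes representation of the Fox H-function $\FoxH{2,1}{2,3}{x^\alpha}{(1,1),(\eta,\beta)}{(d/2,\alpha/2),(1,1),(1,\alpha/2)}$ and then read off the leading behaviour as $x\to 0_+$ by summing residues. Recall that the Fox H-function here has the Mellin–Barnes integrand
\[
\frac{\Gamma(d/2+\tfrac{\alpha}{2}s)\,\Gamma(1+s)\,\Gamma(-s)}{\Gamma(\eta+\beta s)\,\Gamma(1+\tfrac{\alpha}{2}s)}\,(x^\alpha)^{-s},
\]
integrated over a vertical contour in the $s$-plane separating the poles of $\Gamma(d/2+\tfrac{\alpha}{2}s)$ and $\Gamma(1+s)$ (to the left) from those of $\Gamma(-s)$ (to the right). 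For the $x\to 0_+$ asymptotics one closes the contour to the left and sums the residues at the poles of $\Gamma(1+s)$ at $s=-1,-2,\dots$ and at the poles of $\Gamma(d/2+\tfrac{\alpha}{2}s)$ at $s=-(d+2k)/\alpha$, $k=0,1,2,\dots$. Multiplying by the prefactor $x^{-d}$, the contribution of a pole at $s=-c$ scales like $x^{\alpha c - d}$, so the smallest value of $c$ among the active poles gives the leading term; coincidences between the two families of poles, and cancellations caused by zeros of $1/\Gamma(\eta+\beta s)$, account for the case split.

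The key steps, in order: first I would write out the residue at $s=-1$, which gives a term proportional to $x^{\alpha-d}/\Gamma(\eta-\beta)$ — this is present precisely when $\eta\neq\beta$ (so that $1/\Gamma(\eta-\beta)\neq 0$) and dominates when $d>\alpha$, yielding Case 1; when $d=\alpha$ this pole collides with the $k=0$ pole of $\Gamma(d/2+\tfrac{\alpha}{2}s)$ at $s=-d/\alpha=-1$, producing a double pole and hence the $\log x$ term of Case 2; when $d<\alpha$ the $k=0$ pole at $s=-d/\alpha$ lies to the right of $s=-1$, so it dominates and gives the constant term $\tfrac{2}{\alpha}\Gamma(1-d/\alpha)\Gamma(d/\alpha)/[\Gamma(\eta-d\beta/\alpha)\Gamma(d/2)]$ of Case 3. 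Next, when $\eta=\beta$ the factor $1/\Gamma(\eta-\beta)=1/\Gamma(0)=0$ kills the $s=-1$ residue, so one must go to $s=-2$: this is finite and nonzero iff $\beta=\eta\neq1$ (if $\beta=\eta=1$ then $1/\Gamma(\eta+\beta s)=1/\Gamma(1+s)$ has a zero at $s=-2$ too, and in fact all the $s=-k$ residues vanish, leaving only the $\Gamma(d/2+\tfrac{\alpha}{2}s)$ family — Case 4). For $\beta=\eta\neq1$ the $s=-2$ pole contributes $x^{2\alpha-d}$, giving Cases 5–9 by the same comparison of $s=-2$ against the $k=0$ pole $s=-d/\alpha$ of the other family: $d/\alpha>2$ (Case 5, $s=-2$ dominates), $d/\alpha=2$ (collision, $\log x$, Case 6), $d/\alpha\in(1,2)$ (Case 7, other family dominates with a constant), $d/\alpha=1$ (here $s=-d/\alpha=-1$ but that residue already vanished because $\eta=\beta$; the next one gives the constant $-\beta/\Gamma(1+d/2)$ of Case 8 after evaluating the double-structure carefully), and $d/\alpha<1$ (Case 9). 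Finally, in each case I would compute the subleading order — the second-smallest relevant pole — to justify the stated $O(\cdot)$ remainder, taking $\min(\cdot,0)$ exponents to cover the possibility that the next pole is itself to the right of $s=0$ (contributing a bounded term).

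The main obstacle I anticipate is the careful bookkeeping of the borderline cases where the two pole-families collide (Cases 2, 6) or where a pole of the numerator is cancelled by a zero coming from $1/\Gamma(\eta+\beta s)$ (the passage from Case 1 to Cases 5–9 when $\eta=\beta$, and especially the degenerate Cases 4 and 8). In the collision cases one needs the Laurent expansion of the product of gamma functions to second order, using $\psi$-function identities, to extract the coefficient of $\log x$ and verify it is nonzero; in the cancellation cases one must check that the zero of $1/\Gamma(\eta+\beta s)$ is simple and correctly reduces the order of the pole, so that the "next" residue is genuinely the leading one. The bound on the remainder also requires knowing that the Mellin–Barnes integral over the shifted contour converges, which follows from the standard asymptotics of $\Gamma$ along vertical lines together with the parameter constraints $\alpha\in(0,2]$, $\beta\in(0,2)$ that guarantee the integrand decays; I would invoke the convergence and residue-summation results for Fox H-functions recorded in \cite{KilbasSaigo04H} (and in the Appendix of this paper) rather than reproving them. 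Once the contour manipulations are justified, each coefficient is an explicit finite product of gamma values, and checking it is nonvanishing is immediate from the poles and zeros of $\Gamma$.
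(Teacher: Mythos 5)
Your overall strategy is exactly the one the paper uses: write $g(x)=x^{-d}f(x^{\alpha})$, expand the Fox H-function $f$ near zero by summing residues of its Mellin--Barnes integrand at the left poles (the two families $s=-(1+k)$ from $\Gamma(1+s)$ and $s=-(d+2l)/\alpha$ from $\Gamma(d/2+\tfrac{\alpha}{2}s)$), and sort the nine cases according to which admissible pole is rightmost, whether the two families collide (double pole, hence a $\log x$ term extracted via the $\psi$-function), and whether $1/\Gamma(\eta+\beta s)$ cancels a pole when $\eta=\beta$; the validity of the residue expansion is guaranteed by $a^{*}=2-\beta>0$ and the cited theorems of Kilbas et al. However, the kernel you wrote down is wrong. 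The lower parameter pair $(1,\alpha/2)$ sits in position $j=3>m=2$, so by the definition of the H-function it contributes $\Gamma(1-1-\tfrac{\alpha}{2}s)=\Gamma(-\tfrac{\alpha}{2}s)$ to the denominator, not $\Gamma(1+\tfrac{\alpha}{2}s)$. The correct integrand is
\[
\frac{\Gamma\!\left(\tfrac{d}{2}+\tfrac{\alpha}{2}s\right)\Gamma(1+s)\Gamma(-s)}{\Gamma(\eta+\beta s)\,\Gamma\!\left(-\tfrac{\alpha}{2}s\right)}\,x^{-\alpha s}.
\]
This is not cosmetic: the zeros of $1/\Gamma(-\tfrac{\alpha}{2}s)$ all lie at $s\ge 0$ and leave the left poles untouched, whereas the zeros of your $1/\Gamma(1+\tfrac{\alpha}{2}s)$ sit at $s=-2k/\alpha$ and would spuriously cancel left poles (for $\alpha=2$ they kill the $s=-1$ residue outright, contradicting Case~1) and would corrupt every coefficient (e.g.\ $\Gamma(\alpha/2)$ versus $\Gamma(1-\alpha/2)$ in the Case~1 denominator). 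Your subsequent pole bookkeeping tacitly assumes no such interference, so the argument only works once the formula is repaired.

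A second, localized error is your description of Case~8 ($\eta=\beta\ne 1$, $d=\alpha$). There the $l=0$ pole of the second family coincides with $s=-1$, so the numerator $\Gamma(\tfrac{d}{2}(1+s))\Gamma(1+s)$ has a double pole, while $1/\Gamma(\beta(1+s))$ contributes only a simple zero; the net effect is a simple pole at $s=-1$ whose residue, computed through the limit $\psi(\beta(1+s))/\Gamma(\beta(1+s))\to-\beta$, is precisely the constant of Case~8. It is not true that ``that residue already vanished'' with the constant coming from ``the next'' pole: the leading constant of Case~8 is the residue at $s=-1$ itself, and the next poles only contribute the $O(x^{\alpha})$ remainder. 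With these two repairs your proof coincides with the paper's.
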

The calculations in the proof of this lemma is quite lengthy. We postpone it to Appendix \ref{SS:HAt0}.

\begin{remark}\label{R:YZero}
Since Dalang's condition \eqref{E:Dalang} implies $d<2\alpha$, the cases 5 and 6 are void under \eqref{E:Dalang}.
Combining the rest seven cases in Lemma \ref{L:HAt0}, we have that
\begin{align}\label{E:YZero}
\lim_{x\rightarrow 0} Y_{\alpha,\beta,\gamma,d}(1,x)
=
\begin{cases}
+\infty & \text{if $\gamma>0$ and $\alpha\le d<2\alpha$}:\hfill \text{Cases 1--2,}\\
C_1 &  \text{if $\gamma>0$ and $\alpha>d=1$}:\hfill \text{Case 3,}\\
C_2 &  \text{if $\gamma=0$, $\beta=1$ and $\alpha\ne d$}:\hfill \text{Case 4,}\\
C_3 &  \text{if $\gamma=0$, $\beta\ne 1$ and $d<2\alpha$}:\hfill \text{Cases 7--9,}
\end{cases}
\end{align}
and
\begin{align}\label{E:ZZero}
\lim_{x\rightarrow 0} Z_{\alpha,\beta,d}(1,x)
=
\begin{cases}
+\infty & \text{if $\beta\ne 1$ and $\alpha\le d<2\alpha$}:\hfill \text{Cases 1--2,}\\
C_4 &  \text{if $\beta\ne 1$ and $\alpha>d=1$}:\hfill \text{Case 3,}\\
C_2 &  \text{if $\beta=1$ and $\alpha\ne d$}:\hfill \text{Case 4,}\\
\end{cases}
\end{align}
and when $\beta\in (1,2)$,
\begin{align}\label{E:Z*Zero}
\lim_{x\rightarrow 0} Z_{\alpha,\beta,d}^*(1,x)
=
\begin{cases}
+\infty & \text{if $\alpha\le d<2\alpha$}:\hspace{3em} \text{Cases 1--2,}\\
C_5 &  \text{if $\alpha>d=1$}:\hfill \text{Case 3,}
\end{cases}
\end{align}
where the constants $C_i\in \R\setminus\{0\}$, $i=1,\dots,5$, only depend on $\alpha$, $\beta$, $\gamma$ and $d$.
Combining all these cases, we see that under \eqref{E:CaseA}, $Y(1,x)$ is bounded at $x=0$,
and under \eqref{E:CaseB}, all functions $Z(1,x)$, $Z^*(1,x)$ and $Y(1,x)$ are bounded at $x=0$.
\end{remark}

\begin{lemma}\label{L:YAsy}
$Y_{\alpha,\beta,\gamma,d}(1,x)$ has the following asymptotic property as $|x|\rightarrow \infty$:
\begin{align}\label{E:YAsy}
Y_{\alpha,\beta,\gamma,d}(1,x) \sim
\begin{cases}
 A_{\alpha} |x|^{-(d+\alpha)}& \text{if $\alpha\ne 2$},\\
 A_{2} |x|^{a}e^{-b|x|^c}
 &\text{if $\alpha=2$,}
\end{cases}
\end{align}
where the nonnegative constants are
\begin{align}\label{E:Aa}
A_{\alpha}=
\begin{cases}
 -\pi^{-d/2}\nu 2^{\alpha-1}\frac{\Gamma((d+\alpha)/2)}{\Gamma(2\beta+\gamma)\Gamma(-\alpha/2)} & \text{if $\alpha\ne 2$,}
 \\[0.5em]
\pi^{-d/2} (2-\beta)^{d/2-(\beta+\gamma)}\beta^{\frac{\beta(d+2-2(\beta+\gamma))}{2(2-\beta)}}(2\nu)^{\frac{2(\beta+\gamma)-(d+2)}{2(2-\beta)}}
&\text{if $\alpha=2$,}
\end{cases}
\end{align}
and
\begin{align}\label{E:abc}
a=\frac{d(\beta-1)-2(\beta+\gamma-1)}{2-\beta},\quad
b=(2-\beta)\beta^{\frac{\beta}{2-\beta}}(2\nu)^{\frac{1}{\beta-2}},
\quad\text{and}\quad
c=\frac{2}{2-\beta}.
\end{align}
Moreover, the asymptotic properties for $Z(1,x)$ and $Z^*(1,x)$ are the same as that for $Y(1,x)$ except that
the argument $\gamma$ in both \eqref{E:Aa} and \eqref{E:abc} should be replaced by $\Ceil{\beta}-\beta$ and $1$, respectively.
\end{lemma}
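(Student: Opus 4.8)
The starting point will be the Mellin--Barnes representation of the Fox H-function in \eqref{E:Yab}. Writing $z:=|x|^\alpha/(2^{\alpha-1}\nu)$, so that $|x|\to\infty$ corresponds to $z\to+\infty$, the plan is to analyze
\[
Y_{\alpha,\beta,\gamma,d}(1,x)=\frac{\pi^{-d/2}|x|^{-d}}{2\pi i}\int_{\mathcal{L}}
\frac{\Gamma\!\left(\tfrac d2+\tfrac\alpha2 s\right)\Gamma(1+s)\,\Gamma(-s)}{\Gamma\!\left(-\tfrac\alpha2 s\right)\Gamma(\beta+\gamma+\beta s)}\,z^{-s}\,\ud s,
\]
where the contour $\mathcal{L}$ runs upward and separates the poles of $\Gamma(d/2+\alpha s/2)\Gamma(1+s)$, which all lie in $\{\Re s<0\}$, from those of $\Gamma(-s)$, at $s=0,1,2,\dots$. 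The key structural observation is that the factor $\Gamma(-s)$ in the numerator is cancelled by $1/\Gamma(-\alpha s/2)$ in the denominator: for $\alpha\ne2$ only the pole at $s=0$ is killed (since $1/\Gamma(-\alpha s/2)$ vanishes simply at $s=0$ but not at $s=1,2,\dots$), whereas for $\alpha=2$ one has $\Gamma(-\alpha s/2)=\Gamma(-s)$, the cancellation is total, and there are no poles to the right of $\mathcal{L}$ at all. This is exactly the dichotomy between algebraic and exponential decay in \eqref{E:YAsy}. Since $a^\ast:=2-\beta>0$ (because $\beta\in(0,2)$), the integrand decays exponentially in $|\Im s|$ on every vertical line, which makes the contour manipulations below legitimate; equivalently one may invoke the general asymptotic theory of the Fox H-function at infinity from \cite{KilbasSaigo04H} with parameters $a^\ast=2-\beta$ and $\Delta=\alpha-\beta$ read off from \eqref{E:Yab}.

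For the case $\alpha\ne2$ I would shift $\mathcal{L}$ rightward, past the removable point $s=0$ and across the simple pole at $s=1$, to the line $\{\Re s=3/2\}$; since $|z^{-s}|=z^{-3/2}$ there and the Gamma ratio is absolutely integrable on that line, the shifted integral is $O(z^{-3/2})=o(|x|^{-(d+\alpha)})$. Hence the leading contribution is $-\pi^{-d/2}|x|^{-d}$ times
\[
\Res_{s=1}\left[\frac{\Gamma\!\left(\tfrac d2+\tfrac\alpha2 s\right)\Gamma(1+s)\Gamma(-s)}{\Gamma\!\left(-\tfrac\alpha2 s\right)\Gamma(\beta+\gamma+\beta s)}\,z^{-s}\right]
=\frac{\Gamma\!\left(\tfrac{d+\alpha}{2}\right)}{\Gamma(-\alpha/2)\,\Gamma(2\beta+\gamma)}\,z^{-1},
\]
and substituting $z^{-1}=2^{\alpha-1}\nu\,|x|^{-\alpha}$ gives precisely $A_\alpha|x|^{-(d+\alpha)}$ with $A_\alpha$ as in \eqref{E:Aa} (note $A_\alpha\ge0$ since $\Gamma(-\alpha/2)<0$ for $\alpha\in(0,2)$), the poles at $s=2,3,\dots$ supplying the lower-order terms. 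A quick consistency check comes from \eqref{E:FY}: $\calF Y_{\alpha,\beta,\gamma,d}(1,\cdot)(\xi)=E_{\beta,\beta+\gamma}(-\tfrac\nu2|\xi|^\alpha)=\tfrac1{\Gamma(\beta+\gamma)}-\tfrac{\nu/2}{\Gamma(2\beta+\gamma)}|\xi|^\alpha+\cdots$ near $\xi=0$, and when $\alpha\notin2\bbZ$ the non-smooth term $|\xi|^\alpha$ has inverse Fourier transform a multiple of $|x|^{-(d+\alpha)}$ with the same constant.

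For the case $\alpha=2$, after the cancellation the representation becomes
\[
Y_{2,\beta,\gamma,d}(1,x)=\frac{\pi^{-d/2}|x|^{-d}}{2\pi i}\int_{\mathcal{L}}
\frac{\Gamma\!\left(\tfrac d2+s\right)\Gamma(1+s)}{\Gamma(\beta+\gamma+\beta s)}\,\Big(\tfrac{|x|^2}{2\nu}\Big)^{-s}\,\ud s,
\]
an integral with no poles to the right of $\mathcal{L}$, so its decay is faster than any power of $z=|x|^2/(2\nu)$ and is governed by a saddle point. Applying Stirling's formula to the three Gamma factors, the ``fast'' part of $\log[G(s)z^{-s}]$ (with $G$ the Gamma ratio) is $(2-\beta)s\log s-(2-\beta)s-\beta s\log\beta-s\log z$; its critical point is $s_0=(z\beta^{\beta})^{1/(2-\beta)}$, the critical value is $-(2-\beta)s_0$, and the relevant second derivative is $\sim(2-\beta)/s_0$, so deforming $\mathcal{L}$ through $s_0$ and applying the Laplace method yields the exponential factor $\exp(-b|x|^c)$ with $c=2/(2-\beta)$ and $b=(2-\beta)\beta^{\beta/(2-\beta)}(2\nu)^{1/(\beta-2)}$, as in \eqref{E:abc}. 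Collecting the subexponential pieces --- the Gaussian factor $\propto s_0^{1/2}$ from the Laplace step, the Stirling correction $\propto s_0^{\,d/2+1/2-\beta-\gamma}$, and the prefactor $|x|^{-d}$ --- produces the power $|x|^{a}$ with $a$ exactly as in \eqref{E:abc} and the constant $A_2$ as in \eqref{E:Aa}; this recovers the classical Wright-function / time-fractional heat-kernel asymptotics of \cite{MainardiEtc01Fundamental}. Finally, the claims for $Z(1,x)$ and $Z^\ast(1,x)$ follow verbatim from the same analysis applied to \eqref{E:Zab} and \eqref{E:Z*ab}, which differ from \eqref{E:Yab} only by replacing the parameter $\beta+\gamma$ with $\Ceil{\beta}$ and with $1$, respectively; carrying this substitution through \eqref{E:Aa}--\eqref{E:abc} gives the stated constants.

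The routine part is the residue computation for $\alpha\ne2$. The delicate part --- which I expect to be the main obstacle --- is the $\alpha=2$ case: one must select the steepest-descent contour through the moving saddle $s_0$, control the tail of the integral uniformly as $|x|\to\infty$, and track the Stirling corrections precisely enough to extract not merely the exponential rate but also the exact power $a$ and constant $A_2$. Invoking the ready-made exponential-asymptotics theorem for H-functions (the $n=0$ case in \cite{KilbasSaigo04H}) bypasses the analytic work but still leaves the purely computational task of checking that the general formula specializes to \eqref{E:YAsy}.
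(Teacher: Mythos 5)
Your proposal is correct and takes essentially the same route as the paper, which simply invokes the Fox H-function asymptotics of \cite[Sections 1.5 and 1.7]{KilbasSaigo04H} and leaves the details to the reader; your Mellin--Barnes contour shift (residue at $s=1$ for $\alpha\ne 2$) and saddle-point analysis (for $\alpha=2$) are precisely the arguments behind those cited theorems, and your computed exponent, power and constants agree with \eqref{E:Aa}--\eqref{E:abc}. The only slight imprecision is the parenthetical claim that $1/\Gamma(-\alpha s/2)$ vanishes only at $s=0$: for some $\alpha\in(0,2)$ it also vanishes at certain integers $s=k\ge 2$, but this affects only subleading terms and not the leading asymptotics.
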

These asymptotics are obtained from \cite[Sections 1.5 and 1.7]{KilbasSaigo04H}.
We leave the details for interested readers.

\bigskip
\begin{theorem}\label{T:NonY}
Suppose that $\alpha\in(0,2]$, $\beta\in(0,2)$, and $\gamma\ge 0$.
The functions $Z(t,x):=Z_{\alpha,\beta,d}(t,x)$, $Y(t,x):=Y_{\alpha,\beta,\gamma,d}(t,x)$ and
$Z^*(t,x):=Z_{\alpha,\beta,d}^*(t,x)$, defined in Theorem \ref{T:PDE}, satisfy the following properties:
\begin{enumerate}[(1)]
\item For all $d\in\bbN$ and $\beta\in (0,1)$, both functions $Z$ and $Y$ are nonnegative.
When $\beta=1$, $Z$ is nonnegative, and $Y$ is nonnegative if either $\gamma=0$ or $\gamma>1$;
\item All functions $Z$, $Z^*$ and $Y$ are nonnegative if $d\le 3$ and $1<\beta<\alpha\le 2$.
When $1<\beta=\alpha<2$, $Y$ is nonnegative if $\gamma>(d+3)/2-\beta$;
\item When $d\ge 4$, $Y_{\alpha,\beta,0,d}(t,x)$ assumes both positive and negative values for all
$\alpha\in(0,2]$ and $\beta\in(1,2)$.
\end{enumerate}
\end{theorem}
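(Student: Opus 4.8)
The plan is to work entirely on the Fourier side, where the fundamental solutions take the simple form given in \eqref{E:FZ}--\eqref{E:FZ*}. By the scaling identity \eqref{E:YScale} it suffices to analyze $Y_{\alpha,\beta,\gamma,d}(1,x)$, and by rotational invariance one reduces to studying the radial profile. The key analytic object is the Mittag-Leffler function: from \eqref{E:FY}, $Y_{\alpha,\beta,\gamma,d}(1,\cdot)$ is (up to constants) the inverse Fourier transform of $\xi\mapsto E_{\beta,\beta+\gamma}(-2^{-1}\nu|\xi|^\alpha)$. The first step is therefore to record, via \eqref{E:E-CM} and its known refinements (Schneider \cite{Schneider96CM}, and the Bernstein-function machinery for $\alpha$-stable subordination), exactly when $r\mapsto E_{\beta,\eta}(-r)$ is completely monotone, namely $0<\beta\le 1$ and $\eta\ge\beta$ — this already disposes of part~(1): for $\beta\in(0,1)$ one has $\eta=\Ceil{\beta}=1\ge\beta$ for $Z$ and $\eta=\beta+\gamma\ge\beta$ for $Y$, and complete monotonicity of the symbol plus the fact that a completely monotone function of $|\xi|^\alpha$ with $\alpha\in(0,2]$ is the Fourier transform of a nonnegative function (Schoenberg / subordination of the Gaussian or $\alpha$-stable semigroup) gives $Z,Y\ge 0$; the boundary case $\beta=1$ needs $\eta=\beta+\gamma\ge 1$, i.e. $\gamma=0$ (giving $E_{1,1}(-r)=e^{-r}$, the heat/stable kernel, manifestly positive) or $\gamma>1$ (so $\eta>1\ge\beta$), while $\gamma\in(0,1]$, $\gamma\ne 0$, fails the hypothesis of \eqref{E:E-CM}.

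For part~(2), the range $1<\beta<\alpha\le 2$ is the subordination regime: here one writes $Z_{\alpha,\beta,d}(t,\cdot)$ as a time-changed version of the fundamental solution of the fractional-in-space equation, using the representation of $E_\beta(-r)$ for $\beta\in(1,2)$ in terms of the Wright function $M_\beta$, which is nonnegative precisely when $\beta\le 1$... so instead the correct tool is the known nonnegativity of the space-time fundamental solution for $\beta<\alpha$ established by Mainardi et al. \cite{MainardiEtc01Fundamental} and Pskhu \cite{Pskhu09} when $d=1$, extended to $d\le 3$ by the Fox-H representation together with the asymptotics in Lemmas \ref{L:HAt0} and \ref{L:YAsy}. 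Concretely, one checks positivity of $Y(1,x)$ and $Z(1,x)$ and $Z^*(1,x)$ by: (i) verifying the sign of the leading small-$x$ coefficient from Lemma \ref{L:HAt0} (Cases 1--9, all of which have explicitly computed nonvanishing constants) and the large-$x$ coefficient $A_\alpha$ from Lemma \ref{L:YAsy}; and (ii) ruling out interior sign changes. Step (ii) is where the Mellin--Barnes structure of the Fox-H function is exploited: the Mellin transform of the radial profile is an explicit ratio of Gamma functions, and one invokes the total-positivity / oscillation-count properties of such Mellin kernels (à la Karlin) to bound the number of sign changes by zero when the asymptotic signs at $0$ and $\infty$ agree and no intermediate obstruction arises. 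The endpoint $\beta=\alpha$ with the threshold $\gamma>(d+3)/2-\beta$ comes out of forcing the small-$x$ coefficient (Case 3 type, argument $\eta=\beta+\gamma-d\beta/\alpha=\beta+\gamma-d$) to have the same sign as the large-$x$ coefficient; that inequality on $\gamma$ is exactly what makes $\Gamma(\eta-d\beta/\alpha)=\Gamma(\beta+\gamma-d)>0$ compatible with positivity of $A_\alpha$ in \eqref{E:Aa}.

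For part~(3), the statement is a \emph{negative} result, so one exhibits sign changes. Fix $\gamma=0$, so by \eqref{E:FY} the symbol is $E_\beta(-2^{-1}\nu|\xi|^\alpha)$ with $\beta\in(1,2)$; since $E_\beta(-r)$ for $\beta\in(1,2)$ oscillates (it has infinitely many real zeros, by the classical asymptotics $E_\beta(-r)\sim -1/(\Gamma(1-\beta)r)$ as $r\to\infty$ combined with $E_\beta(0)=1>0$, and more precisely it is \emph{not} completely monotone and changes sign), the inverse Fourier transform in $d\ge 4$ dimensions inherits this oscillation. The cleanest route: use the small-$x$ asymptotics of Lemma \ref{L:HAt0} — for $\gamma=0$, $\beta\ne 1$, the relevant case is Case 7--9 (when $d<2\alpha$) but for $d\ge 4\ge 2\alpha$ one lands in Case 5--6 (namely $\beta=\eta$ and $d/\alpha\ge 2$), where the leading small-$x$ coefficient of $g$ is $-\Gamma((d-2\alpha)/2)/(\Gamma(-\beta)\Gamma(\alpha))$ (Case 5) or the $\log$-coefficient $2\alpha/(\Gamma(-\beta)\Gamma(1+d/2))$ (Case 6); compare its sign with the large-$x$ coefficient $A_\alpha=-\pi^{-d/2}\nu 2^{\alpha-1}\Gamma((d+\alpha)/2)/(\Gamma(2\beta)\Gamma(-\alpha/2))$ from \eqref{E:Aa}. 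Because $\beta\in(1,2)$ forces $\Gamma(-\beta)$ and $\Gamma(2\beta)$ to have a definite (and, as one checks, \emph{opposite-to-compatible}) relative sign while $\Gamma((d-2\alpha)/2)$ with $d\ge 4$, $\alpha\le 2$ can be made to flip, the two ends of the profile have opposite signs, forcing at least one zero crossing and hence both positive and negative values. (If $\alpha=2$ the large-$x$ behavior is the Gaussian-type $|x|^a e^{-b|x|^c}$ with $A_2>0$, and one instead compares against the Case 5 small-$x$ sign.) The genuine difficulty — and the step I expect to absorb most of the work — is step (ii) of part~(2): establishing \emph{no interior sign changes} of the Fox-H radial profile. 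Monotone-rearrangement of signs via Mellin total positivity is the natural framework, but verifying the hypotheses (that the Gamma-ratio kernel is a Pólya frequency / variation-diminishing kernel in the relevant parameter window $1<\beta<\alpha\le 2$, $d\le 3$) requires care; an alternative fallback is to express $Z$ as an explicit subordination integral $\int_0^\infty p_\alpha^{\mathrm{stable}}(s,x)\,\Phi_\beta(t,s)\,ds$ and prove the subordinator density $\Phi_\beta\ge 0$ directly from its Laplace transform $E_\beta$, which reduces everything to a one-dimensional positivity statement about Wright-type functions that is already in \cite{MainardiEtc01Fundamental,Pskhu09} for the cases we need.
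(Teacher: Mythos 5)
Your part (3) follows the paper's argument essentially verbatim: compare the sign of the leading coefficient as $x\to0_+$ (Case 5 of Lemma \ref{L:HAt0}, negative because $\Gamma(-\beta)>0$ for $\beta\in(1,2)$) with the positive tail coefficient as $x\to\infty$, with $\alpha=2$ delegated to Pskhu. Your part (1) via complete monotonicity of $E_{\beta,\eta}(-\cdot)$ and subordination against the symmetric $\alpha$-stable semigroup is a legitimate alternative to the paper's route (which instead factors the Fox H-function through Theorem \ref{T:HConvH} into the two nonnegative kernels of Lemma \ref{L_:NonNeg}(2)--(3)). One correction, though: for $\beta=1$ and $\gamma\in(0,1]$ the criterion \eqref{E:E-CM} is \emph{satisfied}, not violated --- the requirement is $\beta\le 1\wedge(\beta+\gamma)$, i.e.\ $1\le\min(1,1+\gamma)$, which holds for every $\gamma\ge0$ --- so your method would actually prove nonnegativity of $Y$ for all $\gamma\ge0$ when $\beta=1$ rather than "explain" the dichotomy $\gamma=0$ or $\gamma>1$. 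That restriction in the theorem is an artifact of the applicability conditions of the Mellin convolution theorem (condition (3) of Theorem \ref{T:HConvH} with $\mu_2=1-\eta$), not of complete monotonicity.

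The genuine gap is part (2), and you have flagged it yourself without closing it. In the regime $1<\beta<\alpha\le2$ neither of your proposed mechanisms works. The fallback --- a subordination integral $\int_0^\infty p^{\mathrm{stable}}_\alpha(s,x)\,\Phi_\beta(t,s)\,ds$ with $\Phi_\beta\ge0$ determined by the Laplace transform $E_\beta$ --- is impossible for $\beta\in(1,2)$: there $E_\beta(-r)$ oscillates and takes negative values, hence is not completely monotone, so by Bernstein's theorem no nonnegative density has it as a Laplace transform. The primary route ("asymptotic signs at $0$ and $\infty$ agree, and total positivity of the Gamma-ratio Mellin kernel forbids interior sign changes") is only named, never verified; agreement of the two boundary signs is far from sufficient, and establishing the variation-diminishing property is precisely the missing work. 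The paper's actual mechanism is different and concrete: it writes the H-function as a Mellin convolution (Theorem \ref{T:HConvH}) of the explicitly positive neutral-diffusion kernel $\frac{1}{\pi}\,\frac{x^{2/\alpha}}{1+2x^{2/\alpha}\cos(\pi\alpha/2)+x^{4/\alpha}}$ from Lemma \ref{L_:NonNeg}(1) with the factor $f_{d,\eta,2\beta/\alpha}$ of Lemma \ref{L:f}, whose nonnegativity for $d\le3$ is obtained by the dimension-descent identity of Lemma \ref{L:f}(b) reducing to $d=3$ and then, via the duplication formula, to a one-dimensional Wright-type function. This factorization is also where the constraint $d\le3$ and the threshold $\gamma>(d+3)/2-\beta$ at $\beta=\alpha$ actually come from (the latter is the condition $\Re(\mu_1)<-1$ in Theorem \ref{T:HConvH}; your proposed explanation via positivity of $\Gamma(\beta+\gamma-d)$ gives $\beta+\gamma>d$, which matches only when $d=3$). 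Without either this factorization or a worked-out total-positivity argument, part (2) remains unproved.
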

This theorem is proved in Section \ref{SS:NonY}.
It generalizes the results by Mainardi {\it et al} \cite{MainardiEtc01Fundamental}
from one-space dimension to higher space-dimension.
Moreover, in \cite{MainardiEtc01Fundamental} only $Z$ when $\beta\in(0,1]$ and $Z^*$
when $\beta\in(1,2)$ are studied.
When $\beta\in (1,2)$, it also generalizes the results by Pskhu \cite{Pskhu09} from $\alpha=2$ and $\gamma=0$
to general $\alpha\in(0,2]$ and $\gamma>-1$.

\subsection{Some special cases}
In this part, we list some special cases.
\begin{example}
When $\gamma=0$ or $\gamma=\Ceil{\beta}-\beta$, the expressions for $Z$, $Y$ and $Z^*$ in Theorem \ref{T:PDE}
recover the results in \cite{CHHH15Time}.
\end{example}

\begin{example}
When $\alpha=2$, by \cite[Property 2.2]{KilbasSaigo04H}, we see that
\begin{align}\label{E:Z2b1}
 Z_{2,\beta,d}(t,x) &=
 \pi^{-d/2}t^{\Ceil{\beta}-1} |x|^{-d} \FoxH{2,0}{1,2}{\frac{|x|^2}{2\nu t^\beta}}{(\Ceil{\beta},\beta)}{(d/2,1),\:(1,1)},
\end{align}
and
\begin{align}\label{E:Y2b1}
 Y_{2,\beta,\gamma,d}(t,x) &=
 \pi^{-d/2} t^{\beta+\gamma-1}|x|^{-d}\FoxH{2,0}{1,2}{\frac{|x|^2}{2\nu t^\beta}}{(\beta+\gamma,\beta)}{(d/2,1),\:(1,1)},
\end{align}
and, when $\beta\in (1,2)$,
\begin{align}\label{E:Z*2b1}
 Z_{2,\beta,d}^*(t,x) &=
 \pi^{-d/2}|x|^{-d} \FoxH{2,0}{1,2}{\frac{x^2}{2\nu t^\beta}}{(1,\beta)}{(d/2,1),\:(1,1)}.
\end{align}
In particular, for $\beta\in(0,1)$ and $\gamma=0$, the expressions for $Z$ and $Y$ recover
those in \cite{Koc90,EK04}.
For $Z_{2,\beta,d}$, see also \cite[Chapter 6]{KilbasEtc06}.
When $\beta\in (1,2)$, $\gamma=0$ and $\nu=2$, the expression for $Y$ recovers the result in \cite{Pskhu09}.
\end{example}

\begin{example}
When $\alpha=2$ and  $d=1$, using Lemma \ref{L:H1/2} and \eqref{E:M-H2}, we see that
\begin{align}\label{E:Z2b1d1}
 Z_{2,\beta,1}(t,x) &=
 |x|^{-1} t^{\Ceil{\beta}-1} \FoxH{1,0}{1,1}{\frac{2x^2}{\nu t^\beta}}{(\Ceil{\beta},\beta)}{(1,2)}
 =\frac{t^{\Ceil{\beta}-1-\beta/2}}{\sqrt{2\nu}}
 M_{\beta/2,\Ceil{\beta}}\left(\frac{|x|}{\sqrt{\nu/2} \: t^{\beta/2}}\right),
\end{align}
and
\begin{align}\label{E:Y2b1d1}
 Y_{2,\beta,\gamma,1}(t,x) &=
 |x|^{-1} t^{\beta+\gamma-1}\FoxH{1,0}{1,1}{\frac{2x^2}{\nu t^\beta}}{(\beta+\gamma,\beta)}{(1,2)}=
 \frac{t^{\beta/2+\gamma-1}}{\sqrt{2\nu}}
 M_{\beta/2,\beta+\gamma}\left(\frac{|x|}{\sqrt{\nu/2} \: t^{\beta/2}}\right),
\end{align}
and, when $\beta\in (1,2)$,
\begin{align}\label{E:Z*2b1d1}
 Z_{2,\beta,1}^*(t,x) &=
 |x|^{-1} \FoxH{1,0}{1,1}{\frac{2x^2}{\nu t^\beta}}{(1,\beta)}{(1,2)}=
  \frac{t^{-\beta/2}}{\sqrt{2\nu}}
 M_{\beta/2,1}\left(\frac{|x|}{\sqrt{\nu/2} \: t^{\beta/2}}\right),
\end{align}
where $M_{\lambda,\mu} (z)$ is the {\it two-parameter Mainardi functions} (see \cite{Chen14Time}) of order
$\lambda\in [0,1)$,
\begin{align}\label{E:2p-Mainardi}
M_{\lambda,\mu} (z) :=\sum_{n=0}^\infty \frac{(-1)^n \: z^n}{n! \;\Gamma\left(
\mu-(n+1)\lambda\right)}
\;,\quad\text{for $\mu$ and $z\in\bbC$\:.}
\end{align}
For example, $M_{1/2,1}(z)=\frac{1}{\sqrt{\pi}}\exp\left(-z^2/4\right)$.
The {\em one-parameter Mainardi functions} $M_\lambda(z)$ are used by
Mainardi, {\it et al} in \cite{MainardiEtc01Fundamental,Mainardi10Book}.
\end{example}

\begin{example}
In \cite{MainardiEtc01Fundamental}, the fundamental solutions $Z_{\alpha,\beta,d}(t,x)$ for $\beta\in (0,1]$
and $Z^*_{\alpha,\beta,d}(t,x)$ for $\beta\in (1,2]$ have been studied for all $\alpha\in(0,2)$ and $d=1$.
From the Mellin-Barnes integral representation (6.6) of \cite{MainardiEtc01Fundamental}, we can see that the reduced Green function of \cite{MainardiEtc01Fundamental}
can be expressed using the Fox H-function:
\begin{align}\label{E:Kabt}
K_{\alpha,\beta}^\theta(x)=\frac{1}{|x|}
\FoxH{2,1}{3,3}{|x|^\alpha}
{(1,1),\:(1,\beta),\:(1,\frac{\alpha-\theta}{2})}
{(1,1),\:(1,\alpha),\:(1,\frac{\alpha-\theta}{2})}, \quad x\in\RR,
\end{align}
where $\alpha$ and $\beta$ have the same meaning as this paper and $\theta$ is the skewness: $|\theta|\le \min(\alpha,2-\alpha)$.
For the symmetric $\alpha$-stable case, i.e., $\theta=0$, this expression can be simplified using
Lemma \ref{L:H1/2}.
Hence,
\begin{align}
K_{\alpha,\beta}^0(x)=\frac{1}{\sqrt{\pi}|x|}
\FoxH{2,1}{2,3}{(|x|/2)^\alpha}
{(1,1),\:(1,\beta)}
{(1/2,\alpha/2),\:(1,1),\:(1,\alpha/2)},\quad x\in\RR.
\end{align}
Therefore, their fundamental solution \cite[(1.3)]{MainardiEtc01Fundamental}
\[
G^0_{\alpha,\beta}(x,t)=t^{-\beta/\alpha}K^0_{\alpha,\beta}(t^{-\beta/\alpha} x) =
\frac{1}{\sqrt{\pi}|x|}
\FoxH{2,1}{2,3}{\frac{|x|^\alpha}{2^\alpha t^\beta}}
{(1,1),\:(1,\beta)}
{(1/2,\alpha/2),\:(1,1),\:(1,\alpha/2)}
\]
corresponds, in the case when $\nu=2$, to our $Z_{\alpha,\beta,1}(t,x)$ when $\beta\in (0,1]$
and $Z^*_{\alpha,\beta,1}(t,x)$ when $\beta\in (1,2)$.
\end{example}

Here we draw some graphs\footnote{
The graphs are produced by truncating the infinite sum in \eqref{E:2p-Mainardi} by the first $24$ terms.
In Figure \ref{F:Ytx}, due to the bad approximations for small $t$ when truncating the infinite sum, the graphs are produced for $t$ staying away from $0$.
}
of these Green functions $Y(t,x)$: see Figures \ref{F:Yx} and \ref{F:Ytx}.
As $\beta$ approaches $2$, the graphs of $Y(t,x)$ become closer to the wave kernel $\frac{1}{2}1_{|x|\le \nu t/2}$.
\begin{figure}[h!tbp]
 \center
 \includegraphics[scale=0.6]{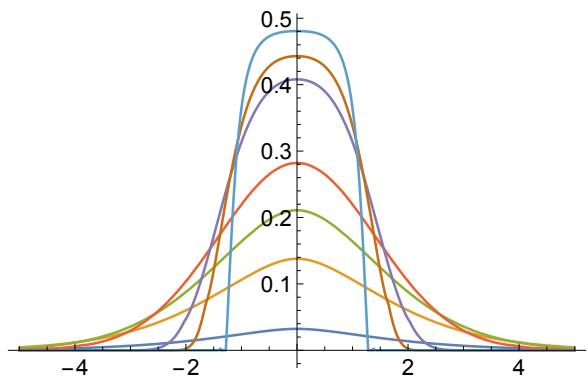}
 \caption{Some graphs of the function $Y_{2,\beta,0,1}(1,x)$ with $\nu=2$,
 and $\beta=15/8$, $5/3$, $3/2$, $1$, $3/4$, $1/2$, and $1/8$ from top to bottom.}
 \label{F:Yx}
\end{figure}

\begin{figure}[h!tbp]
\centering
\subfloat[$\beta=6/5$.]{
\includegraphics[scale=0.4]{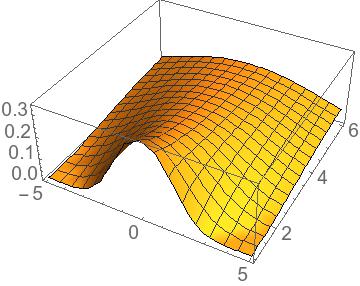}}
\subfloat[$\beta=3/2$.]{
\includegraphics[scale=0.4]{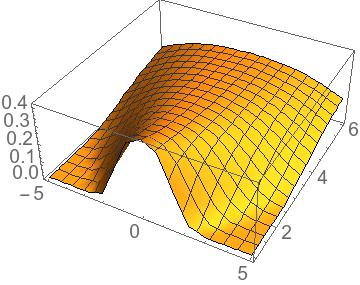}}
\subfloat[$\beta=15/8$.]{
\includegraphics[scale=0.4]{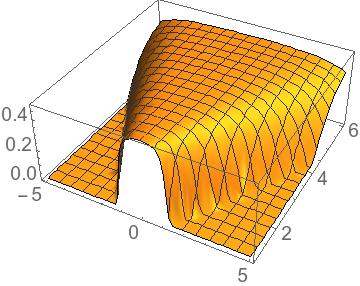}}
\caption{Graphs of the Green functions $Y_{2,\beta,0,1}(t,x)$ for $1<\beta<2$ for $1\le t\le 6$ and $|x|\le 5$.}
\label{F:Ytx}
\end{figure}

\subsection{Proof of Theorem \ref{T:PDE}}\label{SS:PDE}

\begin{proof}[Proof of Theorem \ref{T:PDE}]
Equations \eqref{E:Zab}--\eqref{E:FZ*} have been proved in \cite{CHHH15Time} when $\gamma=0$.
Let $\widehat{f}$ and $\widetilde{g}$ denote the Fourier transform in the
space variable and the Laplace transform in the
time variable, respectively.
Apply the Fourier transform to \eqref{E:PDE} to obtain
\[
\begin{cases}
\displaystyle \partial^\beta \widehat{u}(t,\xi)+\frac{\nu}{2}|\xi|^\alpha
\widehat{u}(t,\xi)=I_t^\gamma \left[\widehat{f}(t,\xi)\right]\;,& \xi\in\R^d\\[0.5em]
\displaystyle
\left.\frac{\partial^k}{\partial t^k} \widehat{u}(t,\xi)\right|_{t=0} = \widehat{u}_k(\xi)\;,
&\text{$0\le k\le \Ceil{\beta}-1$, $\xi\in\R^d$\:.}
\end{cases}
\]
Apply the Laplace transform on the Caputo derivative using \cite[Theorem 7.1]{Die04}:
\[
\calL\left[\partial^\beta\: \widehat{u}(t,\xi)\right](s) = s^\beta \;\widetilde{\widehat{u}}(s,\xi) -
\sum_{k=0}^{\Ceil{\beta}-1} s^{\beta-1-k}\; \widehat{u}_k(\xi).
\]
On the other hand, it is known that (see, e.g., \cite[(7.14)]{SamkoKilbasMarichev93}),
\[
\calL I_t^\gamma\left[\widehat{f}(t,\xi)\right] = s^{-\gamma} \widetilde{\widehat{f}}(s,\xi),\quad \Re(\gamma)>0.
\]
Thus,
\[
\widetilde{\widehat{u}}(s,\xi) = \left(s^\beta
+\frac\nu2 \:|\xi|^\alpha \right)^{-1}\left[\sum_{k=0}^{\Ceil{\beta}-1} s^{\beta-1-k}\; \widehat{u}_k(\xi)
+s^{-\gamma} \widetilde{\widehat{f}}(s,\xi)\right].
\]
Notice that (see \cite[(1.80)]{Podlubny99FDE})
\[
\calL\left[t^{\beta-1}E_{\alpha,\beta}(-\lambda t^\alpha) \right](s) = \frac{s^{\alpha-\beta}}{s^\alpha+\lambda},
\quad\text{for $\Re(s)>|\lambda|^{1/\alpha}$.}
\]
Hence,
\[
\widehat{u}(t,\xi)=\sum_{k=0}^{\Ceil{\beta}-1}t^k E_{\beta,k+1}\left(-\frac{\nu}{2}|\xi|^\alpha t^\beta\right)\widehat{u}_k(\xi)
+
\int_0^t \ud \tau
\: \tau^{\beta+\gamma-1} E_{\beta,\beta+\gamma}\left(-\frac{\nu}{2}|\xi|^\alpha\tau^\beta\right)\widehat{f}(\tau,\xi),
\]
from which \eqref{E:FZ}--\eqref{E:FZ*} are proved.
The expressions for $Z$ and $Z^*$ in \eqref{E:Zab} and \eqref{E:Z*ab}, respectively,  are proved in \cite{CHHH15Time}.
By the fact that (see \cite[(1.82)]{Podlubny99FDE})
\[
\lMr{t}{D}{+}^\gamma \left(t^{\beta-1}E_{\alpha,\beta}(\lambda t^\alpha)\right) = t^{\beta-\gamma-1}
E_{\alpha,\beta-\gamma}(\lambda t^\alpha), \quad \gamma\in\R.
\]
Recall that $\DRL{+}{\alpha}$ is the Riemann-Liouville fractional derivative of order $\alpha\in\R$ (see \eqref{E:RLD}).
Hence,
we see that
\[
Y(t,x)= \lMr{t}{D}{+}^{\theta} Z(t,x), \quad \text{with $\theta:=\Ceil{\beta}-\beta-\gamma$},
\]
which can be evaluated using \cite[Theorem 2.8]{KilbasSaigo04H} in the same way as in \cite{CHHH15Time} for the case $\gamma=0$.
%
This completes the proof of Theorem \ref{T:PDE}.
\end{proof}

\subsection{Nonnegativity of the fundamental solutions (proof of Theorem \ref{T:NonY})}
\label{SS:NonY}

We first prove some lemmas.

\begin{lemma}\label{L_:NonNeg}
The following Fox H-functions are nonnegative:
\begin{enumerate}[(1)]
 \item for all $\theta\in (0,1)$,
 \begin{align}\label{E_:N1}
\FoxH{1,1}{2,2}{x}
{(0,1),\:\left(0,\theta\right)}
{(0,1),\:\left(0,\theta\right)}
=\frac{1}{\pi} \frac{x^{1/\theta}}{1+2 x^{1/\theta}\cos(\pi\theta)+x^{2/\theta}}
> 0,\quad\text{for $x>0$};
\end{align}
\item for all $\mu>0$ and $0<\theta\le \min(1,\mu)$,
\begin{align}\label{E_:H1}
 \RR\ni x \mapsto \FoxH{1,0}{1,1}{|x|}{(\mu,\theta)}{(1,1)}\ge 0.
\end{align}
\item for all $d\in\bbN$ and $\alpha\in (0,2]$,
\begin{align}\label{E_:NonStable}
\R\ni x\mapsto \FoxH{1,1}{1,2}{|x|}{(1,1)}{(d/2,\alpha/2),\:(1,\alpha/2)}> 0.
\end{align}
\end{enumerate}
\end{lemma}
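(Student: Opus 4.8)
The three assertions are independent; each is a closed-form evaluation or a sign statement for an explicit low-order Fox H-function, and in each case the plan is to work from the Mellin--Barnes representation.

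\emph{Part (1).} The plan is to evaluate the Fox H-function explicitly. Its Mellin--Barnes integrand is $\Gamma(s)\Gamma(1-s)\big/\big(\Gamma(\theta s)\Gamma(1-\theta s)\big)\,x^{-s}$, which by the reflection formula $\Gamma(z)\Gamma(1-z)=\pi/\sin(\pi z)$ collapses to $\tfrac{\sin(\pi\theta s)}{\sin(\pi s)}\,x^{-s}$; the only poles are the simple poles of $1/\sin(\pi s)$ at the nonzero integers, the pole of $\Gamma(s)$ at $s=0$ being cancelled by the zero of $1/\Gamma(\theta s)$. Closing the contour to the left for $x<1$ and to the right for $x>1$ and summing residues gives a sine series which sums in closed form to a Poisson-kernel expression. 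For $\theta\in(0,1)$ positivity is then immediate by completing the square in the denominator: $1+2y\cos(\pi\theta)+y^2=(y+\cos\pi\theta)^2+\sin^2(\pi\theta)\ge\sin^2(\pi\theta)>0$, while the numerator is a positive power of $x$.

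\emph{Part (2).} Again I would start from the Mellin--Barnes reduction: the integrand of $\FoxH{1,0}{1,1}{w}{(\mu,\theta)}{(1,1)}$ is $\tfrac{\Gamma(1+s)}{\Gamma(\mu+\theta s)}\,w^{-s}$, and its residues at $s=-1,-2,\dots$ sum to $w\,M_{\theta,\mu}(w)$, with $M_{\theta,\mu}$ the two-parameter Mainardi function \eqref{E:2p-Mainardi} (up to a scaling, the function underlying \eqref{E:Y2b1d1}). The task then reduces to showing $M_{\theta,\mu}(w)\ge0$ for $w\ge0$ when $0<\theta\le\min(1,\mu)$, and I would do this by propagating in the index $\mu$. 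The base case $\mu=1$ is the classical one-parameter Mainardi function $M_\theta$, nonnegative because $M_\theta(x)=\tfrac1\theta\,x^{-1-1/\theta}g_\theta(x^{-1/\theta})$ with $g_\theta>0$ the one-sided $\theta$-stable density; then the Riemann--Liouville identity $I_t^{\gamma}\big[t^{\mu-1}M_{\theta,\mu}(xt^{-\theta})\big]\propto t^{\mu+\gamma-1}M_{\theta,\mu+\gamma}(xt^{-\theta})$ --- the Mainardi analogue of \cite[(1.82)]{Podlubny99FDE} --- together with the fact that $I_t^\gamma$ maps nonnegative functions to nonnegative functions, carries the property from $\mu=1$ to all $\mu\ge1$. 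The residual band $\theta\le\mu<1$ has to be reached separately, through a subordination (integral) representation of $M_{\theta,\mu}$ against $g_\theta$ that holds precisely when $\mu\ge\theta$. I expect this last point --- nonnegativity all the way down to $\mu=\theta$, rather than just for the ``fundamental-solution'' values of $\mu$ --- to be the main obstacle.

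\emph{Part (3).} The plan is to identify $\FoxH{1,1}{1,2}{r}{(1,1)}{(d/2,\alpha/2),(1,\alpha/2)}$ with a positive constant times $r^{d/\alpha}$ times the radial profile of the $d$-dimensional isotropic $\alpha$-stable density. Its Mellin transform, read off from the integrand, is $\Gamma\big(\tfrac d2+\tfrac\alpha2 s\big)\,\Gamma(-s)\big/\Gamma\big(-\tfrac\alpha2 s\big)$. On the other hand, if $\psi_\alpha$ is the radial density of the law on $\R^d$ with characteristic function $e^{-|\xi|^\alpha}$, then the subordination $X\stackrel{d}{=}\sqrt{2S}\,Z$ ($Z$ standard Gaussian in $\R^d$, $S$ an independent positive $(\alpha/2)$-stable variable) gives $\E|X|^q=2^q\,\Gamma\big(\tfrac{d+q}{2}\big)\Gamma\big(1-\tfrac q\alpha\big)\big/\big(\Gamma\big(\tfrac d2\big)\Gamma\big(1-\tfrac q2\big)\big)$, hence an explicit Gamma-quotient for $\int_0^\infty r^{s-1}\psi_\alpha(r)\,\ud r$. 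Matching the two (using $\Gamma(1-z)=-z\Gamma(-z)$ and the substitution $r\mapsto2r^{1/\alpha}$) and invoking uniqueness of Mellin inversion on the common strip gives $\FoxH{1,1}{1,2}{r}{(1,1)}{(d/2,\alpha/2),(1,\alpha/2)}=2^{d}\pi^{d/2}\,r^{d/\alpha}\,\psi_\alpha\big(2r^{1/\alpha}\big)$ for $r>0$; since $\psi_\alpha>0$ everywhere --- immediate from $\psi_\alpha(u)=\int_0^\infty(4\pi s)^{-d/2}e^{-u^2/(4s)}g_{\alpha/2}(s)\,\ud s$ with $g_{\alpha/2}>0$ --- and $r^{d/\alpha}>0$, strict positivity follows. (As a check, $\alpha=2$ collapses the Fox H to $r^{d/2}e^{-r}$.) Thus parts (1) and (3) rest on standard computations and the positivity of the stable transition density, and the real content lies in part (2).
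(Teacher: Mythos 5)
Your part (1) reproduces the paper's own argument (series expansion of the Mellin--Barnes integral, summation of the resulting sine series to the Poisson-kernel form, positivity by completing the square in the denominator), so there is nothing to add there. Your part (3) is a genuine, self-contained alternative: the paper simply cites \cite[Theorem 3.3]{CHHH15Time} at this point, whereas you identify the H-function with $2^{d}\pi^{d/2}r^{d/\alpha}\psi_\alpha(2r^{1/\alpha})$ by matching Mellin transforms; the Gamma-quotients do agree, the $\alpha=2$ sanity check $r^{d/2}e^{-r}$ is correct, and strict positivity of the isotropic stable density follows from the Gaussian subordination exactly as you say. What your route buys is a proof that does not outsource the positivity to an external reference.

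The gap is in part (2), and it sits exactly where you flag it. Your reduction of \eqref{E_:H1} to the nonnegativity of $M_{\theta,\mu}$ on $[0,\infty)$ is correct, but the two steps you actually carry out (the stable-density identity at $\mu=1$ and the $I_t^\gamma$ propagation upward) only reach $\mu\ge 1$, while the lemma asserts the range $\mu\ge\theta$; the band $\theta\le\mu<1$ is deferred to an unnamed ``subordination representation,'' so as written the stated range is not proved. (The paper avoids the issue by citing \cite[Lemma 4.5]{CHHH15Time}.) The missing step can be supplied in one stroke, and it in fact supersedes both your base case and the $I_t^\gamma$ argument: the classical Laplace-transform pair for the Wright/Mainardi functions (see \cite[Appendix F]{Mainardi10Book}) reads
\begin{align*}
\int_0^\infty e^{-st}\,t^{\mu-\theta-1}M_{\theta,\mu}\!\left(x\,t^{-\theta}\right)\ud t
=\sum_{k=0}^\infty\frac{(-x)^k}{k!}\,s^{\theta k-(\mu-\theta)}
=s^{-(\mu-\theta)}\,e^{-x s^{\theta}},\qquad x\ge 0,\ s>0,
\end{align*}
the termwise computation being legitimate for $\theta\in(0,1)$ thanks to the super-exponential decay of $M_{\theta,\mu}$ on the positive real axis. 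For $0<\theta\le 1$ the function $s\mapsto e^{-xs^\theta}$ is completely monotone (since $s^\theta$ is a Bernstein function), and $s\mapsto s^{-(\mu-\theta)}$ is completely monotone precisely when $\mu\ge\theta$; the product is then completely monotone, so by Bernstein's theorem and uniqueness of Laplace transforms the continuous function $t\mapsto t^{\mu-\theta-1}M_{\theta,\mu}(xt^{-\theta})$ is nonnegative, and taking $t=1$ gives $M_{\theta,\mu}(x)\ge 0$ for all $x\ge 0$ on the full range $0<\theta\le\min(1,\mu)$. Note that the lemma's threshold $\mu\ge\theta$ is exactly the threshold for complete monotonicity of $s^{-(\mu-\theta)}$, which confirms your diagnosis that this is where the real content of the lemma lies.
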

\begin{proof}
(2) and (3) are covered by Lemma 4.5 and Theorem 3.3 of \cite{CHHH15Time}, respectively.
As for (1),
expression \eqref{E_:N1} can be found in \cite[(4.38)]{MainardiEtc01Fundamental} for
the {\it neutral-fractional diffusions}.
For completeness, we give a proof here.
Because the parameters $\Delta$ and $\delta$, defined in \eqref{E:Delta} and \eqref{E:delta}, of this Fox H-function are equal to $0$ and $1$, respectively,
Theorem 1.3 implies that for $x\in(0,1)$,
\begin{align}
\notag
\FoxH{1,1}{2,2}{x}
{(0,1),\:\left(0,\theta\right)}
{(0,1),\:\left(0,\theta\right)}
&=\sum_{k=0}^\infty \frac{(-1)^k}{k!} \frac{\Gamma(k+1)}{\Gamma(-k\theta)\Gamma(1+k\theta)} x^{k/\theta}\\
\label{E:sin}
&=
\sum_{k=0}^\infty -(-1)^k \sin(\pi k\theta) x^{k/\theta}\\ \notag
&=\Im\left(\sum_{k=0}^\infty -(-1)^k e^{k\theta i} x^{k/\theta}\right)\\ \notag
&=-\Im \frac{1}{1+e^{\theta i} x^{1/\theta}} \\ \notag
&=\frac{1}{\pi} \frac{x^{1/\theta}}{1+2 x^{1/\theta}\cos(\pi\theta)+x^{2/\theta}},
\end{align}
where we have applied \cite[(5.5.3)]{NIST2010} in \eqref{E:sin}. Similarly, when $x>1$, Theorem 1.4
of \cite{KilbasEtc06} implies that
\begin{align*}
\FoxH{1,1}{2,2}{x}
{(0,1),\:\left(0,\theta\right)}
{(0,1),\:\left(0,\theta\right)}
&=\sum_{k=0}^\infty \frac{(-1)^k}{k!} \frac{\Gamma(k+1)}{\Gamma(-(1+k)\theta)\Gamma(1+(1+k)\theta)} x^{-(1+k)/\theta}\\
&=
\sum_{k=0}^\infty (-1)^k \sin(\pi (1+k)\theta) x^{-(1+k)/\theta}\\
&=\Im\left(\sum_{k=0}^\infty -(-1)^k e^{k\theta i} x^{-k/\theta}\right)\\
&=-\Im \frac{1}{1+e^{\theta i} x^{-1/\theta}} \\
&=\frac{1}{\pi} \frac{x^{1/\theta}}{1+2 x^{1/\theta}\cos(\pi\theta)+x^{2/\theta}}.
\end{align*}
Finally, the existence of this Fox H-function at $x=1$ is not covered by Theorem 1.1 of \cite{KilbasSaigo04H}
because $\Delta=0$ and $\mu=0$ (see \eqref{E:mu} for the definition of the parameter $\mu$).
In fact, as one can see that the series in \eqref{E:sin} with $x=1$ diverges.
Nevertheless, we may define that
\[
\FoxH{1,1}{2,2}{1}
{(0,1),\:\left(0,\theta\right)}
{(0,1),\:\left(0,\theta\right)}
:= \lim_{x\rightarrow 1 }\frac{1}{\pi} \frac{x^{1/\theta}}{1+2 x^{1/\theta}\cos(\pi\theta)+x^{2/\theta}}
=\frac{1}{2\pi} \frac{1}{1+\cos(\pi\theta)}>0.
\]
This completes the proof of Lemma \ref{L_:NonNeg}.
\end{proof}

\begin{lemma}\label{L:f}
For $\mu\in (0,2]$, $\theta\in (0,2]$ and $d\ge 1$, the function
\[
f_{d,\mu,\theta}(x) := x^{-d} \FoxH{2,0}{1,2}{x^2}{(\mu,\theta)}{(d/2,1),\:(1,1)}, \quad x>0,
\]
has the following properties:
\begin{enumerate}[(a)]
 \item $\displaystyle \frac{\ud}{\ud x}f_{d,\mu,\theta}(x) = - 2 x f_{d+2,\mu,\theta}(x) $.
 \item $\displaystyle f_{d,\mu,\theta}(x) = \frac{2}{\sqrt{\pi}}\int_x^\infty \ud z \frac{z}{\sqrt{z^2-x^2}} f_{d+1,\mu,\theta}(z)$ for $x>0$.
 \item $f_{d,\mu,\theta}(x)\ge 0$ for all $x>0$ if $\theta\le 2 \min(1,\mu)$ and $d\le 3$.
\end{enumerate}
\end{lemma}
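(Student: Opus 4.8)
The plan is to base everything on the Mellin--Barnes representation
\[
f_{d,\mu,\theta}(x)=\frac{1}{2\pi i}\int_{\calL}\frac{\Gamma(d/2+s)\,\Gamma(1+s)}{\Gamma(\mu+\theta s)}\,x^{-d-2s}\,\ud s,
\]
which is just the Mellin--Barnes integral defining $H^{2,0}_{1,2}$ in \cite{KilbasSaigo04H}, evaluated at $x^2$ and multiplied by $x^{-d}$; the vertical contour $\calL$ runs to the right of all poles of the numerator. By Stirling's formula the integrand decays super-polynomially along $\calL$, uniformly for $x$ in compact subsets of $(0,\infty)$, so differentiation under the integral sign (needed in (a)) and the interchange of integrations (needed in (b)) are both legitimate.

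For part (a), I would differentiate the representation in $x$: this pulls down a factor $-(d+2s)\,x^{-d-1-2s}$ inside the integral, and then $(d+2s)\,\Gamma(d/2+s)=2\,\Gamma(d/2+1+s)$ together with $x^{-d-1-2s}=x\cdot x^{-(d+2)-2s}$ identifies the result as $-2x\,f_{d+2,\mu,\theta}(x)$; this is also a special case of the differentiation rule for Fox H-functions in \cite{KilbasSaigo04H}. For part (b), I would substitute the Mellin--Barnes integral for $f_{d+1,\mu,\theta}(z)$ into the right-hand side and exchange the order of integration; the inner integral reduces, via $z=xu$ and then $t=u^{-2}$, to a Beta integral,
\[
\int_x^\infty\frac{z\cdot z^{-(d+1)-2s}}{\sqrt{z^2-x^2}}\,\ud z=\int_x^\infty\frac{z^{-d-2s}}{\sqrt{z^2-x^2}}\,\ud z=\frac{\sqrt\pi}{2}\,\frac{\Gamma(d/2+s)}{\Gamma\bigl((d+1)/2+s\bigr)}\,x^{-d-2s},
\]
and multiplying by $\tfrac{2}{\sqrt\pi}\,\Gamma\bigl((d+1)/2+s\bigr)\Gamma(1+s)/\Gamma(\mu+\theta s)$ cancels $\Gamma((d+1)/2+s)$ and returns exactly the Mellin--Barnes integral for $f_{d,\mu,\theta}(x)$.

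Part (c) is the substantive point, and the idea is to collapse the product $\Gamma(d/2+s)\Gamma(1+s)$ into a single Gamma so that $f_{d,\mu,\theta}$ becomes a positive multiple of a power of $x$ times $\FoxH{1,0}{1,1}{\,\cdot\,}{(a,A)}{(1,1)}$, which is nonnegative by Lemma~\ref{L_:NonNeg}(2) as soon as $0<A\le\min(1,a)$. For $d=3$, Legendre's duplication formula $\Gamma(1+s)\Gamma(3/2+s)=2^{-1-2s}\sqrt\pi\,\Gamma(2+2s)$ turns the integral into $\tfrac{\sqrt\pi}{4}\,\FoxH{1,0}{1,1}{2x}{(\mu,\theta/2)}{(2,1)}$, and the scaling rule $z^{\sigma}H^{m,n}_{p,q}(z)=H^{m,n}_{p,q}(z)$ with all parameters shifted by $\sigma$ rewrites this as $\tfrac{\sqrt\pi}{2}\,x\,\FoxH{1,0}{1,1}{2x}{(\mu-\theta/2,\theta/2)}{(1,1)}$, whence
\[
f_{3,\mu,\theta}(x)=\frac{\sqrt\pi}{2}\,x^{-2}\,\FoxH{1,0}{1,1}{2x}{(\mu-\theta/2,\theta/2)}{(1,1)}\ge 0
\]
by Lemma~\ref{L_:NonNeg}(2), whose hypothesis $0<\theta/2\le\min(1,\mu-\theta/2)$ is met under the standing assumptions together with $\theta\le\mu$ (which holds in every intended application, where $\mu=\beta+\gamma\ge\beta=\theta$). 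The cases $d=2$ and $d=1$ then cost nothing: part (b) exhibits $f_{2,\mu,\theta}$ as the integral of $f_{3,\mu,\theta}\ge 0$ and $f_{1,\mu,\theta}$ as the integral of $f_{2,\mu,\theta}\ge 0$ against the nonnegative kernel $\tfrac{2}{\sqrt\pi}\,z\,(z^2-x^2)^{-1/2}$. (For $d=1$ one may also argue directly, the same duplication giving $f_{1,\mu,\theta}(x)=\tfrac{\sqrt\pi}{2}\,x^{-1}\,\FoxH{1,0}{1,1}{2x}{(\mu,\theta/2)}{(1,1)}$, which is nonnegative in the full range $\theta\le 2\min(1,\mu)$.)

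I expect the main obstacle to be the parameter bookkeeping in part (c): one must verify that, after applying the duplication formula and the argument-shift rule, the parameters fall inside the admissible region $0<A\le\min(1,a)$ of Lemma~\ref{L_:NonNeg}(2) in each case. Everything else---the super-polynomial decay of the Mellin--Barnes integrands, differentiation under the integral, and the Fubini interchange---is a routine consequence of Stirling's estimate.
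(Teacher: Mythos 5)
Your proposal reproduces the paper's proof essentially step for step. Parts (a) and (b) are exactly the paper's argument: the paper phrases (a) via the catalogued differentiation rule \cite[Property 2.8]{KilbasSaigo04H} followed by the Gamma recurrence $(d/2+s)\Gamma(d/2+s)=\Gamma(1+d/2+s)$, which is precisely your direct differentiation of the Mellin--Barnes integral; and (b) is verbatim the paper's computation (substitute the contour integral, Fubini, the same Beta integral $\int_x^\infty z^{-d-2s}(z^2-x^2)^{-1/2}\,\ud z=\tfrac{\sqrt\pi}{2}x^{-d-2s}\Gamma(d/2+s)/\Gamma((d+1)/2+s)$). For (c) you also follow the paper: reduce to $d=3$ via (b), collapse $\Gamma(1+s)\Gamma(3/2+s)$ by duplication (the paper's Lemma~\ref{L:H1/2}), rescale and shift to land on $\FoxH{1,0}{1,1}{2x}{(\mu-\theta/2,\,\theta/2)}{(1,1)}$, and invoke Lemma~\ref{L_:NonNeg}(2).

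The one point that deserves attention is the caveat you yourself flag in (c), and here your proposed repair does not hold up. Applying Lemma~\ref{L_:NonNeg}(2) to $\FoxH{1,0}{1,1}{2x}{(\mu-\theta/2,\,\theta/2)}{(1,1)}$ requires $0<\theta/2\le\min(1,\mu-\theta/2)$, i.e.\ $\theta\le\mu$, which is strictly stronger than the stated hypothesis $\theta\le 2\min(1,\mu)$; the paper applies the lemma without comment, so you have correctly identified a gap that is present in the source as well. However, your claim that $\theta\le\mu$ ``holds in every intended application'' is false: in the proof of Theorem~\ref{T:NonY}(2) the lemma is also applied to $Z^*$, where the upper parameter is $\mu=\eta=1$ while $\theta=\beta\in(1,2)$ (and, for $\alpha<2$, $\theta=2\beta/\alpha>1$), so there $\theta>\mu$ even though $\theta\le 2\min(1,\mu)=2$. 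Consequently neither your argument nor the paper's establishes (c) for $d\in\{2,3\}$ in the full range $\mu<\theta\le 2\min(1,\mu)$; only the $d=1$ case, where your direct duplication gives $f_{1,\mu,\theta}(x)=\tfrac{\sqrt\pi}{2}x^{-1}\FoxH{1,0}{1,1}{2x}{(\mu,\theta/2)}{(1,1)}$ and Lemma~\ref{L_:NonNeg}(2) needs exactly $\theta\le 2\min(1,\mu)$, is covered as stated. Closing the gap for $d=2,3$ requires a nonnegativity statement for $\FoxH{1,0}{1,1}{\cdot}{(a,A)}{(1,1)}$ (equivalently for the Wright/Mainardi function $M_{A,a}$) valid for $0\le a<A\le 1$, not just $a\ge A$; this is not supplied by Lemma~\ref{L_:NonNeg}(2) and would need a separate argument.
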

\begin{proof}
{\bf (a)~} Apply \cite[Property 2.8]{KilbasSaigo04H} with $k=1$, $w=-d$, $c=1$ and $\sigma=2$ to get
\[
\frac{\ud}{\ud x}f_{d,\mu,\theta}(x) = x^{-d-1}
\FoxH{2,1}{2,3}{x^2}{(d,2),\:(\mu,\theta)}{(d/2,1),\:(1,1),\:(1+d,2)}.
\]
By the recurrence relation of the Gamma function, we see that
\[
\frac{\Gamma(1-d-2s)}{\Gamma(-d-2s)}\: \Gamma(d/2+s)
=-2 (s+d/2) \Gamma(d/2+s)=-2 \Gamma(1+d/2+s).
\]
By the definition of the Fox H-function, the above expression can be simplified as
\[
\frac{\ud}{\ud x}f_{d,\mu,\theta}(x) = -2 x^{-d-1}
\FoxH{2,1}{1,2}{x^2}{(\mu,\theta)}{((d+2)/2,1),\:(1,1)}
=-2 x f_{d+2,\mu,\theta}(x).
\]

{\vspace{1em}\noindent\bf (b)~}
By the definition of the Fox H-function,
\begin{align}\label{E_:fd+1}
f_{d+1,\mu,\theta}(x)=x^{-(d+1)}\frac{1}{2\pi i}\int_{L_{i\gamma\infty}} \frac{\Gamma((d+1)/2+s)\Gamma(1+s)}{\Gamma(1-\mu-\theta s)} x^{-2s} \ud s,\quad
\text{for any $\gamma>-1$},
\end{align}
where the contour $L_{i\gamma\infty}$ is defined in Definition \ref{D:H}.
Assuming that we can switch the integrals,
which can be made rigorous by writing $f$ in the series form and applying Fubini's theorem,
we see that
\[
\int_x^\infty \ud z \: \frac{z}{\sqrt{z^2-x^2}}f_{d+1,\mu,\theta}(z)=
\frac{1}{2\pi i}\int_{L_{i\gamma\infty}}\ud s
\frac{\Gamma((d+1)/2+s)\Gamma(1+s)}{\Gamma(1-\mu-\theta s)}
\int_x^\infty \ud z\: \frac{z^{-2s-d}}{\sqrt{z^2-x^2}}.
\]
By change of variable $(z/x)^2-1=y$ and Euler's Beta integral (see, e.g., \cite[5.12.3 on p.142]{NIST2010}),
we see that
\begin{align*}
 \int_x^\infty \ud z\: \frac{z^{-2s-d}}{\sqrt{z^2-x^2}}
 =\frac{x^{-2s-d}}{2}\int_0^\infty y^{\frac{1}{2}-1} (1+y)^{-\frac{1}{2}-\frac{2s+d}{2}}\ud y=
 \frac{x^{-2s-d}}{2} \frac{\sqrt{\pi}\: \Gamma(d/2+s)}{\Gamma((d+1)/2+s)}.
\end{align*}
Note that the above integral is convergent provided that $\Re(2s+d)>0$, which is satisfied by choosing, e.g.,
$\gamma=\Re(s)=0$ in \eqref{E_:fd+1}.
Therefore,
\[
\int_x^\infty \ud z \: \frac{z}{\sqrt{z^2-x^2}}f_{d+1,\mu,\theta}(z)=
\frac{\sqrt{\pi}}{2}  x^{-d}\frac{1}{2\pi i}\int_{L_{i\gamma\infty}} \frac{\Gamma(d/2+s)\Gamma(1+s)}{\Gamma(1-\mu-\theta s)} x^{-2s} \ud s
=
\frac{\sqrt{\pi}}{2} f_{d,\mu,\theta}(x).
\]

{\vspace{1em}\noindent\bf (c)~}
By the recurrence in (b), we only need to prove the case $d=3$.
Apply Lemma \ref{L:H1/2}, Properties 2.4 and 2.5 in \cite{KilbasEtc06} to obtain
\begin{align*}
f_{3,\mu,\theta}(x)&=\frac{\sqrt{\pi}}{4}x^{-3}\FoxH{1,0}{1,1}{4x^2}{(\mu,\theta)}{(2,2)}\\
&=\frac{\sqrt{\pi}}{8}x^{-3}\FoxH{1,0}{1,1}{2x}{(\mu,\theta/2)}{(2,1)}\\
&=\frac{\sqrt{\pi}}{16}x^{-4}\FoxH{1,0}{1,1}{2x}{(\mu-\theta/2,\theta/2)}{(1,1)}.
\end{align*}
Then (c) is proved by an application of part (2) of Lemma \ref{L_:NonNeg}.
%
\end{proof}

\bigskip
\begin{proof}[Proof of Theorem \ref{T:NonY}]
By comparing the Fox H-functions in \eqref{E:Zab}, \eqref{E:Yab}, and \eqref{E:Z*ab},
We only need to consider the following Fox H-function:
\[
g(x)=
\FoxH{2,1}{2,3}{x}{(1,1),\: (\eta,\beta)}
{(d/2,\alpha/2),\: (1,1),\: (1,\alpha/2) },\quad x>0.
\]
The parameter $\eta$ takes the following values
\[
\eta =
\begin{cases}
 \Ceil{\beta}& \text{in case of $Z$},\\
 \beta+\gamma& \text{in case of $Y$},\\
 1& \text{in case of $Z^*$}.
\end{cases}
\]

{\noindent\bf (1)~}
If $\beta=1$ and $\gamma=0$, then $Z=Y$ and by Property 2.2 of \cite{KilbasEtc06},
\[
g(x) = \FoxH{2,0}{1,2}{x}{(1,1)}{(d/2,\alpha/2),\:(1,\alpha/2)}, \qquad x>0
\]
which is positive by part (3) of Lemma \ref{L_:NonNeg}.
If $\beta<1$, then we can apply Theorem \ref{T:HConvH} to obtain that
\begin{align}\label{E:HHH2}
g(x)=\int_0^\infty
\FoxH{1,1}{1,2}{t}{(1,1)}{(d/2,\alpha/2),\:(1,\alpha/2)}
\FoxH{1,0}{1,1}{\frac{x}{t}}{(\eta,\beta)}{(1,1)}\frac{\ud t}{t}.
\end{align}
In fact, conditions \eqref{E:HConvH} are satisfied because
\[
A_1=0,\quad B_1=d/\alpha, \quad A_2=1, \quad B_2=\infty.
\]
Moreover, $a_1^*=1$ and $\beta\in(0,1)$ implies that $a_2^*=1-\beta>0$.
Hence, condition (1) of Theorem \ref{T:HConvH} is satisfied.
This proves \eqref{E:HHH2}.
If $\beta=1$ and $\gamma>0$, then $a_2^*=\Delta_2=0$. In view of condition (3) of Theorem \ref{T:HConvH},
relation \eqref{E:HHH2} is still true if
$\Re(\mu_2)>-1$ with $\mu_2=1-\eta$, i.e., $\gamma>1$.
The two Fox H-functions in \eqref{E:HHH2} are nonnegative by parts (2) and (3) of Lemma \ref{L_:NonNeg}.

{\bigskip\noindent\bf (2)~}
In this case, we have that $d\le 3$.
When $\alpha=2$, by Property 2.2 of \cite{KilbasEtc06} and Lemma \ref{L:f},
\[
g(x)=\FoxH{2,0}{1,2}{x}{(\eta,\beta)}{(d/2,1),\:(1,1)}= x^{d/2}f_{d,\eta,\beta}(\sqrt{x})\ge 0,\quad x>0,
\]
because $\beta<2\le 2\min(1,\eta)$.
If $\alpha\ne 2$, by Property 2.2 of \cite{KilbasEtc06}, we see that
\[
g(x)=
\FoxH{3,1}{3,4}{x}{(1,1),\: (\eta,\beta),\: (1,\alpha/2) }
{(d/2,\alpha/2),\: (1,\alpha/2),\: (1,1),\: (1,\alpha/2) },\quad x>0,
\]
As in the previous case, by Theorem \ref{T:HConvH}, we see that
\begin{align}\label{E:HHH}
\int_0^\infty
\FoxH{2,0}{1,2}{t}{(\eta,\beta)}{(d/2,\alpha/2),\:(1,\alpha/2)}
\FoxH{1,1}{2,2}{\frac{x}{t}}{(1,1),\:(1,\alpha/2)}{(1,1),\:(1,\alpha/2)}\frac{\ud t}{t} =
\FoxH{3,1}{3,4}{x}{(1,1),\: (\eta,\beta),\: (1,\alpha/2) }
{(d/2,\alpha/2),\: (1,\alpha/2),\: (1,1),\: (1,\alpha/2) }.
\end{align}
Note that condition \eqref{E:HConvH} is satisfied because in this case,
\[
A_1=\infty,\quad B_1=\min(d,2)/\alpha,\quad
A_2=1,\quad B_2=0.
\]
When $\alpha<\beta$, then
\[
a_1^*=\alpha-\beta>0,\quad a_2^*=2-\alpha>0,
\]
and condition (1) in Theorem \ref{T:HConvH} is satisfied.
When $1<\beta=\alpha<2$, then
\[
a_2^*=2-\alpha>0,\quad a_1^*=\Delta_1=0,\quad \Re(\mu_1)=1+\frac{d}{2}-\eta-\frac{1}{2}.
\]
Hence, in view of condition (2) of Theorem \ref{T:HConvH},
the integral \eqref{E:HHH} is still true if $1+d/2-\eta-1/2<-1$, i.e., $\gamma>(d+3)/2-\beta$.

Now, by Property 2.4 of \cite{KilbasEtc06}, the first Fox H-function in \eqref{E:HHH} is equal to
\[
\frac{2}{\alpha}\FoxH{2,0}{1,2}{t^{2/\alpha}}{(\eta,2\beta/\alpha)}{(d/2,1),(1,1)} =
\frac{2}{\alpha}t^{d/\alpha} f_{d,\eta,2\beta/\alpha}(t^{1/\alpha}).
\]
By Lemma \ref{L:f} (c), we see that under the condition that
$\frac{2\beta}{\alpha}\le 2\min(1,\eta)$,
the first Fox H-function in \eqref{E:HHH} is nonnegative.
This condition is satisfied if $1\le \beta\le \alpha\le 2$.
By Property 2.3 in \cite{KilbasSaigo04H}, the second Fox H-function in \eqref{E:HHH}
is equal to
\[
\FoxH{1,1}{2,2}{\frac{t}{x}}{(0,1),\:(0,\alpha/2)}{(0,1),\:(0,\alpha/2)}.
\]
Thanks to Lemma \ref{L_:NonNeg} (1), this function is strictly positive for $t/x\ne 0$ when $\alpha\in (0,2)$.

\bigskip{\noindent\bf (3)~}
Now we consider the case when $d\ge 4$.
The case $\alpha=2$ is covered by Lemma 25 of \cite{Pskhu09}.
In the following, we assume that $\alpha \in (0,2)$.
By the scaling property, we may only consider the case $t=1$.
Hence, it suffices to study the following function
\[
g(x)= x^{-d} \FoxH{2,1}{2,3}{x^\alpha}{(1,1),\:(\beta,\beta)}{(d/2,\alpha/2),\:(1,1),\:(1,\alpha/2)}, \quad x>0.
\]
Because $a^*=2-\beta>0$, we can apply Theorem 1.7 of \cite{KilbasEtc06} to obtain that
\[
g(x) = -\frac{\Gamma((d+\alpha)/2)}{\Gamma(2\beta)\Gamma(-\alpha/2)} x^{-d-1} + O(x^{-(d+1)}),\quad \text{as $x\rightarrow\infty$.}
\]
The condition $\alpha\in (0,2)$ implies that $\Gamma(-\alpha/2)<0$. Thus, the coefficient of $x^{-d-1}$ is positive.
Hence, $g$ can assume positive values.

Now we consider the behavior of $g(x)$ around zero.
Because $\beta>1$ and $2\alpha< 4\le d$, we can apply the case 6 of Lemma \ref{L:HAt0}:
\[
g(x) = - \frac{\Gamma((d-2\alpha)/2)}{\Gamma(-\beta)\Gamma(\alpha)} x^{2\alpha-d} + O(x^{\min(3\alpha-d,0)}),
\quad\text{as $x\rightarrow 0_+$.}
\]
The coefficient of $x^{2\alpha-d}$ is negative because $\Gamma(-\beta)>0$ for $\beta\in(1,2)$.
Therefore, $g(x)$ can assume negative values.
This completes the proof of Theorem \ref{T:NonY}.
\end{proof}

\section{Proofs of Theorems \ref{T2:ExUni} and \ref{T:ExUni}}\label{S:Proof}

The proofs of Theorems \ref{T2:ExUni} and \ref{T:ExUni} will follow the
same arguments as the proof of \cite[Theorem 1.2]{ChenDalang13Heat},
which requires some lemmas and propositions.

\subsection{Dalang's condition}
\begin{lemma}\label{L:ML-Est}
Suppose that $\theta>1/2$ and $\beta\in (0,2)$. The following statements are true:
\begin{enumerate}[(a)]
 \item There is some nonnegative constant $C_{\beta,\theta}$ such that for all $t>0$ and $\lambda>0$,
 \begin{align}\label{E:ML-Est1}
\int_0^t w^{2(\theta-1)} E^2_{\beta,\theta}(-\lambda w^\beta )\ud w \le
C_{\beta,\theta}\:\frac{t^{2\theta-1}}{1+(t\lambda^{1/\beta})^{\min(2\beta,2\theta-1)}},
\end{align}
\item If $\beta\le \min(1,\theta)$, then for some nonnegative constant $C_{\beta,\theta}'$,
 \[
\int_0^t w^{2(\theta-1)} E^2_{\beta,\theta}(-\lambda w^\beta )\ud w \ge
C_{\beta,\theta}'\: \frac{t^{2\theta-1}}{1+(t\lambda^{1/\beta})^{\min(2\beta,2\theta-1)}},
\]
for all $t>0$ and $\lambda>0$.
\end{enumerate}
\end{lemma}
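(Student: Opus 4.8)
The plan is to strip the two parameters $t$ and $\lambda$ by scaling and reduce both inequalities to a single two‑sided estimate for one universal profile. Substituting $w=\lambda^{-1/\beta}u$ turns the left‑hand side of \eqref{E:ML-Est1} into $\lambda^{-(2\theta-1)/\beta}\,\Phi\!\left(t\lambda^{1/\beta}\right)$, where
\[
\Phi(T):=\int_0^T u^{2(\theta-1)}\,E^2_{\beta,\theta}(-u^\beta)\,\ud u ,
\]
and since $t^{2\theta-1}=\lambda^{-(2\theta-1)/\beta}\left(t\lambda^{1/\beta}\right)^{2\theta-1}$, statements (a) and (b) are together equivalent to
\[
\Phi(T)\;\asymp\;\frac{T^{2\theta-1}}{1+T^{\min(2\beta,\,2\theta-1)}},\qquad T>0,
\]
the lower direction under the extra hypothesis $\beta\le\min(1,\theta)$. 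So it suffices to prove this one estimate.

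Next I would record sharp size bounds for the Mittag‑Leffler function on the negative half‑line. From the power series \eqref{E:Mittag-Leffler}, $E_{\beta,\theta}(-x)$ is continuous on $[0,\infty)$ with $E_{\beta,\theta}(0)=1/\Gamma(\theta)$; from the classical large‑argument asymptotics on the positive real axis (valid because $\beta<2$; equivalently, the Fox H‑function asymptotics used for Lemma \ref{L:YAsy}),
\[
E_{\beta,\theta}(-x)=\frac{x^{-1}}{\Gamma(\theta-\beta)}+O\!\left(x^{-2}\right),\qquad x\to+\infty .
\]
Combining the two, one gets a constant with $|E_{\beta,\theta}(-x)|\le C_{\beta,\theta}(1+x)^{-1}$ for all $x\ge0$, which is everything needed for the upper bound. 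For the lower bound the hypothesis $\beta\le1\wedge\theta$ puts us in the completely monotonic regime \eqref{E:E-CM}, so $x\mapsto E_{\beta,\theta}(-x)$ is positive and non‑increasing; since the leading coefficient above is positive when $\theta>\beta$ (and when $\theta=\beta$ the subleading coefficient $-1/\Gamma(\theta-2\beta)$ is positive), this upgrades to a matching lower bound $E_{\beta,\theta}(-x)\ge c_{\beta,\theta}(1+x)^{-m}$ with $m\in\{1,2\}$.

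Feeding these pointwise bounds in, both directions reduce to the elementary integral $\int_0^T u^{2(\theta-1)}(1+u^\beta)^{-2}\,\ud u$ (with exponent $-4$ in place of $-2$ in the degenerate case $\theta=\beta$), which I would estimate by splitting at $u=1$ — legitimate because $\theta>1/2$ makes the integrand integrable at the origin. For $T\le1$ the denominator is comparable to $1$ and the integral is $\asymp T^{2\theta-1}$; for $T>1$ one adds $\int_0^1(\cdot)\asymp1$ to $\int_1^T u^{2\theta-2-2\beta}\,\ud u$, which is $\asymp T^{2\theta-1-2\beta}$ when $2\theta-1>2\beta$ (matching $T^{2\theta-1}/T^{2\beta}$) and is $\asymp1$ when $2\theta-1<2\beta$ (matching $T^{2\theta-1}/T^{2\theta-1}$). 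In the degenerate case $\theta=\beta$ the exponent $-4$ only makes the tail converge faster, so $\Phi(T)$ tends to a positive constant, again matching the right‑hand side.

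The genuinely delicate point — the main obstacle — is the resonant case $2\theta-1=2\beta$, i.e. $\theta=\beta+1/2$: then $E_{\beta,\theta}(-x)\asymp x^{-1}$ with a nonzero leading term, the tail integral is $\int_1^T u^{-1}\,\ud u=\log T$, and \eqref{E:ML-Est1} as literally stated holds only up to a factor $\log(e+t\lambda^{1/\beta})$. I would handle this either by verifying that $2\theta-1=2\beta$ does not occur in the applications of the lemma, or by weakening the exponent from $\min(2\beta,2\theta-1)$ to $\min(2\beta,2\theta-1)-\epsilon$ for arbitrary $\epsilon>0$, which absorbs the logarithm and is harmless downstream; the lower bound (b) needs no such adjustment, since there the logarithm only helps. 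A secondary caveat: for $\beta\in(1,2)$ the function $E_{\beta,\theta}(-x)$ oscillates and has zeros, so no pointwise lower bound on $E^2_{\beta,\theta}$ is available — but this is exactly why (b) is restricted to $\beta\le\min(1,\theta)$, the completely monotonic regime, and so this difficulty never arises.
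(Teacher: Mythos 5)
Your argument is correct in substance and is a genuinely more elementary route than the paper's. The paper starts from the same pointwise bound $E_{\beta,\theta}(-x)\le C(1+x)^{-1}$ (via Podlubny's asymptotics), but then identifies the resulting integral with a Gauss hypergeometric function ${}_2F_1(2\beta,2\theta-1,2\theta;-t\lambda^{1/\beta})$, rewrites it as a Fox H-function, and reads off the behaviour at $0$ and $\infty$ from the Kilbas--Saigo asymptotic theorems; your scaling reduction to the single profile $\Phi(T)$ followed by splitting at $u=1$ replaces all of that machinery with a two-line computation and buys transparency. More importantly, your elementary computation exposes something the paper's proof silently misses: the invocation of \cite[Theorem 1.7]{KilbasSaigo04H} gives the clean power $x^{-\min(2\beta,2\theta-1)}$ only when the two governing poles $s=2\beta$ and $s=2\theta-1$ are distinct; when $\theta=\beta+\tfrac12$ they coalesce into a double pole and a factor $\log x$ appears. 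Your direct check confirms this is not an artifact of the method: since $E_{\beta,\theta}(-x)\sim x^{-1}/\Gamma(1/2)$ with nonzero coefficient, $\Phi(T)\asymp\log T$ as $T\to\infty$ in that case, so \eqref{E:ML-Est1} as literally stated is false at $\theta=\beta+\tfrac12$ (in the applications, $\theta=\beta+\gamma$, so this is the admissible value $\gamma=1/2$). Your proposed repairs are the right ones: the $\epsilon$-weakening of the exponent is harmless everywhere the lemma is used, because Lemma \ref{L:Dalang} only needs convergence of $\int_{\R^d}(1+|\xi|^{\Theta})^{-1}\ud\xi$ under the strict inequality $d<\Theta$, and the H\"older exponents in Proposition \ref{P:G-SD} and Theorem \ref{T:Holder} are stated as open ranges.

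One small gap remains in your part (b). The claimed matching lower bound $E_{\beta,\theta}(-x)\ge c_{\beta,\theta}(1+x)^{-m}$ with $m\in\{1,2\}$ fails at the boundary point $\beta=\theta=1$, which the hypothesis $\beta\le\min(1,\theta)$ allows: there $E_{1,1}(-x)=e^{-x}$ decays faster than any power, and indeed \emph{both} asymptotic coefficients $1/\Gamma(\theta-\beta)=1/\Gamma(0)$ and $-1/\Gamma(\theta-2\beta)=-1/\Gamma(-1)$ vanish, so your "subleading coefficient is positive" step breaks down. The conclusion of (b) is still true there, but only by direct computation of $\int_0^t e^{-2\lambda w}\ud w=(2\lambda)^{-1}(1-e^{-2\lambda t})\asymp t/(1+t\lambda)$ --- which is exactly how the paper disposes of this case in the remark following the lemma (and note the paper's own proof of (b) quietly restricts to $\beta<\min(1,\theta)$ for the same reason). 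You should either exclude $\beta=\theta$ from the pointwise-lower-bound step and treat it by the direct argument you already sketch for the upper bound (positivity of $\Phi$ on $[1,\infty)$ from complete monotonicity \eqref{E:E-CM} suffices, since for $\theta=\beta$ the target decay is capped at $T^{-(2\theta-1)}$ anyway), or simply append the one-line computation for $\beta=\theta=1$.
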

\begin{remark}
When $\theta=\beta=1$, then $E_1(x)=e^x$ and thus $\int_0^t e^{-2\lambda w}\ud w = (2\lambda)^{-1}(1-e^{-2t\lambda})$ and \eqref{E:ML-Est1} is clear for this case.
\end{remark}

\begin{proof}[Proof of Lemma \ref{L:ML-Est}]
(a) In this case, by the asymptotic property of the Mittag-Leffler function (see \cite[Theorem 1.6]{Podlubny99FDE}),
for some nonnegative constants $C_i$'s,
\begin{align}\label{E_:WEC}
\int_0^t w^{2(\theta-1)} E^2_{\beta,\theta}(-\lambda w^\beta )\ud w
&\le
C_1 \int_0^t  \frac{w^{2(\theta-1)}}{(1+\lambda^{1/\beta} w)^{2\beta}}\ud w \\ \notag
&=C_2 \: t^{2\theta-1}\int_0^1  \frac{u^{2(\theta-1)}}{(1+\lambda^{1/\beta} t  u)^{2\beta}}\ud u \\ \label{E_:WEC2}
&=C_3 \:t^{2\theta-1}\lMr{2}{F}{1}(2\beta,2\theta-1,2\theta;-t\lambda^{1/\beta})\\ \label{E_:WEC3}
&=C_4 \:t^{2\theta-1}
\FoxH{1,2}{2,2}{t\lambda^{1/\beta}}{(1-2\beta,1),\:(2(1-\theta),1)}{(0,1),\:(1-2\theta,1)},
\end{align}
where in \eqref{E_:WEC2} we have applied \cite[15.6.1]{NIST2010} under the condition that $\theta>1/2$, and \eqref{E_:WEC3} is due to \cite[(2.9.15)]{KilbasSaigo04H}.
Notice that $\Delta=0$ for the above Fox H-function, which allows us to apply Theorems 1.7 and 1.11 of \cite{KilbasSaigo04H}.
In particular, by \cite[Theorem 1.11]{KilbasSaigo04H}, we know that
\[
\FoxH{1,2}{2,2}{x}{(1-2\beta,1),\:(2(1-\theta),1)}{(0,1),\:(1-2\theta,1)}\sim O(1),\quad\text{as $x\rightarrow 0$}.
\]
When $\theta\ne \beta$, by \cite[Theorem 1.7]{KilbasSaigo04H},
\[
\FoxH{1,2}{2,2}{x}{(1-2\beta,1),\:(2(1-\theta),1)}{(0,1),\:(1-2\theta,1)}
\sim O\left(x^{-\min(2\beta,2\theta-1)}\right),\quad\text{as $x\rightarrow \infty$}.
\]
In particular, when $\theta=\beta$, by Property 2.2 and (2.9.5) of \cite{KilbasSaigo04H},
\[
\FoxH{1,2}{2,2}{x}{(1-2\beta,1),\:(2(1-\theta),1)}{(0,1),\:(1-2\theta,1)}
=
\FoxH{1,1}{1,1}{x}{(2(1-\theta),1)}{(0,1)}
=
\Gamma(2\theta-1)(1+x)^{1-2\theta}.
\]
(b) When $\beta<\min(1,\theta)$, by \eqref{E:E-CM}, we know that $E_{\beta,\theta}(-|x|)$ is nonnegative, hence, for another nonnegative
constant $C'$, one can reverse the inequality \eqref{E_:WEC}. This completes the proof of Lemma \ref{L:ML-Est}.
\end{proof}

\begin{lemma}[Dalang's condition]\label{L:Dalang}
Let $Y(t,x)=Y_{\alpha,\beta,\gamma,d}(t,x)$ with $\alpha\in (0,2]$, $\beta\in (0,2)$, and $\gamma>0$.
The following two conditions are equivalent:
\[
\text{(i)~~}d<2\alpha+\frac{\alpha}{\beta}\min(2\gamma-1,0)\qquad
\Longleftrightarrow\qquad \text{(ii)~~}\int_0^t\ud s\int_{\R^d} \ud y \: Y(s,y)^2<\infty,\quad\text{for all $t>0$}.
\]
\end{lemma}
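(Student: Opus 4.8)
The plan is to decouple the time and space integrations by exploiting the exact scaling identity \eqref{E:YScale}. Since $Y^2\ge 0$, Tonelli's theorem gives, for every $t>0$, the identity in $[0,+\infty]$
\[
\int_0^t\ud s\int_{\R^d}\ud y\: Y(s,y)^2
= \left(\int_{\R^d}Y(1,z)^2\,\ud z\right)\int_0^t s^{2(\beta+\gamma-1)-d\beta/\alpha}\,\ud s ,
\]
obtained from \eqref{E:YScale} by the change of variables $z=s^{-\beta/\alpha}y$ (producing the Jacobian factor $s^{d\beta/\alpha}$). The factor $\int_{\R^d}Y(1,z)^2\,\ud z$ is strictly positive because $Y(1,\cdot)$ is not identically zero (e.g.\ by \eqref{E:FY}). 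Hence condition (ii) holds for all $t>0$ (equivalently, for some $t>0$) if and only if both factors on the right-hand side are finite, and it remains to characterize the finiteness of each.

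First, the time factor $\int_0^t s^{2(\beta+\gamma-1)-d\beta/\alpha}\,\ud s$ is finite exactly when $2(\beta+\gamma-1)-d\beta/\alpha>-1$, i.e.\ $2(\beta+\gamma)-1-d\beta/\alpha>0$, which in the notation of \eqref{E:sigma} reads $\sigma<1$. Next I would bound the space factor by splitting $\R^d=\{|z|<1\}\cup\{|z|\ge 1\}$. On the tail, Lemma \ref{L:YAsy} yields $Y(1,z)\sim A_\alpha|z|^{-(d+\alpha)}$ when $\alpha\ne 2$ and super-polynomial (exponential) decay when $\alpha=2$, so $\int_{|z|\ge 1}Y(1,z)^2\,\ud z<\infty$ without any restriction. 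Near the origin, since $\gamma>0$ we are in the regime $\eta:=\beta+\gamma\ne\beta$, so only Cases 1--3 of Lemma \ref{L:HAt0} apply: $Y(1,z)$ is comparable to $|z|^{\alpha-d}$ if $d>\alpha$, to $\log(1/|z|)$ if $d=\alpha$, and is bounded if $d<\alpha$, with nonvanishing leading coefficients in each case. Passing to polar coordinates, $\int_{|z|<1}Y(1,z)^2\,\ud z<\infty$ iff $2(\alpha-d)+d-1>-1$, i.e.\ iff $d<2\alpha$ (when $d\le\alpha$ this holds automatically and the integral converges, while $d\ge 2\alpha$ forces $d>\alpha$ and divergence). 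Thus $\int_{\R^d}Y(1,z)^2\,\ud z<\infty$ if and only if $d<2\alpha$.

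Putting the pieces together, (ii) is equivalent to the conjunction "$d<2\alpha$ and $2(\beta+\gamma)-1-d\beta/\alpha>0$", which is precisely \eqref{E:Dalang2}; since \eqref{E:Dalang2} is equivalent to \eqref{E:Dalang}, this establishes (i)$\Leftrightarrow$(ii). The only slightly delicate points are the bookkeeping at the origin — checking that the integrability threshold coming out of all of Cases 1--3 of Lemma \ref{L:HAt0} really collapses to the single inequality $d<2\alpha$ — and the final algebra showing that "$d<2\alpha$ and $\sigma<1$" reassembles into \eqref{E:Dalang2}; neither should pose a real difficulty. As an alternative for the implication (i)$\Rightarrow$(ii) one could instead apply Plancherel together with \eqref{E:FY} and estimate $\int_{\R^d}\ud\xi\int_0^t s^{2(\beta+\gamma-1)}E_{\beta,\beta+\gamma}^2(-2^{-1}\nu s^\beta|\xi|^\alpha)\,\ud s$ using Lemma \ref{L:ML-Est}(a) with $\theta=\beta+\gamma$, the exponent $\min(2\beta,2(\beta+\gamma)-1)$ there producing the $\min$ in \eqref{E:Dalang} directly; for $\beta\le 1$, part (b) of the same lemma supplies the converse, while for $\beta\in(1,2)$ one would again fall back on the asymptotics argument above.
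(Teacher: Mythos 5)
Your proposal is correct, and it reaches the equivalence by a genuinely different route from the paper. The paper stays on the Fourier side throughout: for (i)$\Rightarrow$(ii) it applies Plancherel together with \eqref{E:FY} and then the Mittag--Leffler integral bound of Lemma \ref{L:ML-Est}(a); for (ii)$\Rightarrow$(i) it splits into $\beta\le 1$ (using the complete monotonicity via Lemma \ref{L:ML-Est}(b)) and $\beta\in(1,2)$ (a Fourier-side decoupling of the double integral plus the large-argument asymptotics of $E_{\beta,\beta+\gamma}(-y)$). You instead factor the space-time integral exactly in physical space using the scaling identity \eqref{E:YScale} --- your factorization is precisely Lemma \ref{L:Y^2} integrated in time, which the paper proves on the Fourier side --- and then read off finiteness of the spatial factor from the real-space asymptotics of $Y(1,\cdot)$ at the origin (Cases 1--3 of Lemma \ref{L:HAt0}, which is where the hypothesis $\gamma>0$, hence $\eta=\beta+\gamma\ne\beta$, enters) and at infinity (Lemma \ref{L:YAsy}). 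What your route buys is a single unified argument with no case split on $\beta$ and a transparent separation of the two failure modes: $d\ge 2\alpha$ corresponds to non-square-integrability of the spatial singularity of $Y(1,\cdot)$ at the origin, while $\sigma\ge 1$ corresponds to non-integrability of $s^{-\sigma}$ at $s=0$; the ``only if'' direction also comes for free from the nonvanishing of the leading coefficients in Lemma \ref{L:HAt0}. The price is a heavier reliance on the Fox H-function asymptotics --- in particular Lemma \ref{L:YAsy}, whose proof the paper only sketches --- whereas the paper's argument needs only classical Mittag--Leffler estimates. Your final reduction of ``$d<2\alpha$ and $\sigma<1$'' to \eqref{E:Dalang2} and hence to (i) is the elementary algebra already recorded in \eqref{E:sigma<1} and the surrounding discussion, so no gap remains.
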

\begin{proof}
(i)$\Rightarrow$(ii):~
Fix an arbitrary $t>0$. By the Plancherel theorem and \eqref{E:FY}, we only need to prove that
\begin{align}\label{E:DalangPlan}
\int_0^t\ud s \int_{\R^d}\ud \xi\: s^{2(\beta+\gamma-1)}E_{\beta,\beta+\gamma}^2(-s^\beta|\xi|^\alpha)<+\infty.
\end{align}
Notice that $d>0$ and Condition (1) together imply that $\beta+\gamma>1/2$. Thus, we can integrate $\ud s$ first using Lemma \ref{L:ML-Est} (a). Then it reduces to prove that
\[
\int_{\R^d} \frac{1}{1+|\xi|^{2\alpha+\frac{\alpha}{\beta}\min(2\gamma-1,0) }}\ud \xi<+\infty,
\]
which is guaranteed by (i).

\bigskip\noindent (ii)$\Rightarrow$(i):~
The case $\beta\in(0,1]$ can be proved in the same way as above by an application of Lemma \ref{L:ML-Est} (b).
The case $\beta\in(1,2)$ is trickier.
Fix $t>0$. Denote the integral in \eqref{E:DalangPlan} by $I(t)$. Then by change of variables,
\[
I(t)= C\int_0^t\ud s \: s^{2(\beta+\gamma-1)-\frac{\beta d}{\alpha}} \int_0^\infty \ud y\: y^{\frac{d}{\alpha}-1}
E_{\beta,\beta+\gamma}^2(-y).
\]
Note that the double integral is decoupled. The integrability of $\ud s$ at zero implies
that $2(\beta+\gamma)-1 -\frac{\beta d}{\alpha}>0$, which is equivalent to
\begin{align}\label{E_:D1}
d<2\alpha +\frac{\alpha}{\beta}(2\gamma-1).
\end{align}
By the asymptotics of $\E_{\beta,\beta+\gamma}(-y)$ at $+\infty$ (see, e.g., Theorem 1.3 in \cite{Podlubny99FDE}; note
that the condition $\beta\in(1,2)$ is used here),
we see that there exist $y_0>0$ and some constant $C>0$ such that
\[
E_{\beta,\beta+\gamma}^2(-y)\ge \frac{C}{1+y^2}\qquad\text{for all $y\ge y_0$.}
\]
Hence, the integrability of $\ud y$ at zero and infinity implies the following conditions:
\begin{align}\label{E_:D2}
d/\alpha>0 \qquad\text{and}\qquad d/\alpha-3<1.
\end{align}
Combining \eqref{E_:D1} and \eqref{E_:D2} gives (i).
This completes the proof of Lemma \ref{L:Dalang}.
\end{proof}

\subsection{Some continuity results on $Y$}

This part contains some continuity results on $Y$.
All the results proved in this part will be used in the proof of Theorem \ref{T:ExUni}.
In particular, Proposition \ref{P:G-SD} will be used to prove the H\"older continuity (Theorem \ref{T:Holder}).

\begin{proposition}\label{P:G-SD}
Suppose $\alpha\in (0,2]$, $\beta\in(0,2)$, $\gamma\ge 0$, and \eqref{E:Dalang} holds.
Then $Y(t,x)=Y_{\alpha,\beta,\gamma,d}(t,x)$ satisfies the following two properties:
\begin{enumerate}[(i)]
 \item
 For all $0<\theta<(\Theta-d)\wedge 2$ and $T>0$,
 there is some nonnegative constant $C=C(\alpha,\beta,\gamma,\nu,\theta,T,d)$ such that
 for all $t\in (0,T]$ and $x,y\in\R^d$,
\begin{align}\label{E:G-x}
\iint_{\R_+\times\R^d}\ud r\ud z\: \left(
Y\left(t-r,x-z\right)
-Y\left(t-r,y-z\right)
\right)^2\le C \: |x-y|^{\theta},
\end{align}
\item If $\beta\le 1$ and $\gamma\le \Ceil{\beta}-\beta$, then there is some nonnegative constant $C=C(\alpha,\beta,\gamma,\nu,d)$ such that
for all $s,t\in\R_+^*$ with $s\le t$, and $x\in\R^d$,
\begin{align}\label{E:G-t1}
\int_0^s \ud r\int_{\R^d}\ud z \left(Y\left(t-r,x-z\right)
-Y\left(s-r,x-z\right)
\right)^2\le C (t-s)^{2(\beta+\gamma)-1-d\beta/\alpha}\;,
\end{align}
and
\begin{align}\label{E:G-t2}
\int_s^t\ud r\int_{\R^d}\ud z \;
Y^2\left(t-r,x-z\right)
\le C (t-s)^{2(\beta+\gamma)-1-d\beta/\alpha}\;.
\end{align}
\end{enumerate}
\end{proposition}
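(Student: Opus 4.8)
The plan is to reduce both assertions, via the Plancherel theorem in the space variable and the Fourier formula \eqref{E:FY}, to one–dimensional estimates for
\[
\widehat{Y}(u,\xi):=\calF Y(u,\cdot)(\xi)=u^{\beta+\gamma-1}E_{\beta,\beta+\gamma}\!\left(-\tfrac{\nu}{2}\,u^{\beta}|\xi|^{\alpha}\right),\qquad u>0,\ \xi\in\R^{d},
\]
and then to exploit the scaling $\xi\mapsto u^{-\beta/\alpha}\eta$ together with the Mittag–Leffler bound $|E_{\beta,q}(-w)|\le C_{\beta,q}(1+w)^{-1}$, valid for $0<\beta\le 1$, any $q\in\R$ and $w\ge 0$ (a consequence of the asymptotics in \cite[Theorem 1.6]{Podlubny99FDE}, as already used in the proof of Lemma \ref{L:ML-Est}). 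Each $Y(u,\cdot)$ belongs to $L^{2}(\R^{d})$ because $d<2\alpha$ (cf. Lemma \ref{L:Dalang}), so the Plancherel step is legitimate. Throughout write $c:=\nu/2$ and recall from \eqref{E:Dalang2}, \eqref{E:sigma<1} that $d<2\alpha$ and $2(\beta+\gamma)-1-d\beta/\alpha=1-\sigma>0$.

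\textbf{Part (i).} Since $Y(u,\cdot)\equiv 0$ for $u\le 0$, the substitution $u=t-r$ and Plancherel turn the left–hand side of \eqref{E:G-x} into a dimensional constant times $\int_{0}^{t}\ud u\int_{\R^{d}}\ud\xi\,|\widehat{Y}(u,\xi)|^{2}\bigl|1-e^{-i\xi\cdot(x-y)}\bigr|^{2}$. Using $\bigl|1-e^{-i\vartheta}\bigr|^{2}\le\min(4,\vartheta^{2})$ and $\min(4,a)\le 4^{1-\theta/2}a^{\theta/2}$ for $a\ge 0$, $\theta\in[0,2]$, the oscillatory factor is $\le C|\xi|^{\theta}|x-y|^{\theta}$, so it remains to bound $\int_{0}^{t}\ud u\int_{\R^{d}}\ud\xi\,|\xi|^{\theta}|\widehat{Y}(u,\xi)|^{2}$ uniformly for $t\in(0,T]$. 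Scaling $\xi=u^{-\beta/\alpha}\eta$ rewrites this as
\[
\left(\int_{\R^{d}}|\eta|^{\theta}E_{\beta,\beta+\gamma}^{2}\!\left(-c|\eta|^{\alpha}\right)\ud\eta\right)\int_{0}^{t}u^{(1-\sigma)-\beta\theta/\alpha-1}\,\ud u .
\]
The $\eta$–integral is finite: in polar coordinates, using $E_{\beta,\beta+\gamma}^{2}(-w)\lesssim w^{-2}$ as $w\to\infty$, convergence needs $d+\theta<2\alpha$, which holds since $\theta<\Theta-d\le 2\alpha-d$. The $u$–integral is finite and stays bounded on $(0,T]$ exactly when $(1-\sigma)-\beta\theta/\alpha>0$; I would check that $\theta<\Theta-d$ forces this, via $\alpha(1-\sigma)/\beta=2\alpha+\tfrac{\alpha}{\beta}(2\gamma-1)-d$, which equals $\Theta-d$ when $\gamma<\tfrac12$ and is $\ge 2\alpha-d=\Theta-d$ when $\gamma\ge\tfrac12$. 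This yields \eqref{E:G-x} with $C=C(\alpha,\beta,\gamma,\nu,\theta,T,d)$.

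\textbf{Part (ii).} Estimate \eqref{E:G-t2} follows from the same scaling: with $u=t-r$, $\int_{s}^{t}\ud r\int_{\R^{d}}\ud z\,Y^{2}(t-r,x-z)=\int_{0}^{t-s}\|\widehat{Y}(u,\cdot)\|_{L^{2}}^{2}\ud u=\bigl(\int_{\R^{d}}E_{\beta,\beta+\gamma}^{2}(-c|\eta|^{\alpha})\ud\eta\bigr)\int_{0}^{t-s}u^{-\sigma}\ud u$, the $\eta$–integral being finite by $d<2\alpha$ (the computation of Lemma \ref{L:Dalang}) and $\int_{0}^{t-s}u^{-\sigma}\ud u=C(t-s)^{1-\sigma}$ since $\sigma<1$. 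For \eqref{E:G-t1} put $h:=t-s$; with $u=s-r$ and Plancherel the left–hand side is a constant times $\int_{0}^{s}\ud u\int_{\R^{d}}\ud\xi\,|\widehat{Y}(u+h,\xi)-\widehat{Y}(u,\xi)|^{2}$. In the easy regime $h\ge s$, bound the integrand by $2|\widehat{Y}(u+h,\xi)|^{2}+2|\widehat{Y}(u,\xi)|^{2}$ and use $\int_{0}^{s}\|\widehat{Y}(u,\cdot)\|_{L^{2}}^{2}\ud u\le\int_{0}^{h}\|\widehat{Y}(u,\cdot)\|_{L^{2}}^{2}\ud u=Ch^{1-\sigma}$ together with $\int_{0}^{s}\|\widehat{Y}(u+h,\cdot)\|_{L^{2}}^{2}\ud u=\int_{h}^{h+s}\|\widehat{Y}(u,\cdot)\|_{L^{2}}^{2}\ud u\le\int_{0}^{2h}\|\widehat{Y}(u,\cdot)\|_{L^{2}}^{2}\ud u=Ch^{1-\sigma}$ (using $s\le h$). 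When $h<s$, split the $u$–integral at $s-h$: on $(s-h,s)$ the same crude bound and \eqref{E:G-t2}–type estimates give $\le Ch^{1-\sigma}$; on $(0,s-h)$ write $\widehat{Y}(u+h,\xi)-\widehat{Y}(u,\xi)=\int_{u}^{u+h}\partial_{v}\widehat{Y}(v,\xi)\,\ud v$ and use $\partial_{v}\bigl[v^{\beta+\gamma-1}E_{\beta,\beta+\gamma}(-cv^{\beta}|\xi|^{\alpha})\bigr]=v^{\beta+\gamma-2}E_{\beta,\beta+\gamma-1}(-cv^{\beta}|\xi|^{\alpha})$, the $\DRL{+}{1}$ case of \cite[(1.82)]{Podlubny99FDE} invoked in the proof of Theorem \ref{T:PDE}. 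Since $\beta\le 1$ and $\gamma\le\Ceil{\beta}-\beta$ force $\beta+\gamma\le 1$, the majorant $v\mapsto C v^{\beta+\gamma-2}(1+cv^{\beta}|\xi|^{\alpha})^{-1}$ of $|\partial_{v}\widehat{Y}(v,\xi)|$ is decreasing, so $|\widehat{Y}(u+h,\xi)-\widehat{Y}(u,\xi)|\le Ch\,u^{\beta+\gamma-2}(1+cu^{\beta}|\xi|^{\alpha})^{-1}$; squaring, integrating $\ud\xi$ (scaling gives $Cu^{-\beta d/\alpha}$), then $\ud u$ over $(0,s-h)$ gives $Ch^{2}\int_{h}^{s}v^{2(\beta+\gamma)-4-\beta d/\alpha}\ud v=Ch^{2}\int_{h}^{s}v^{-\sigma-2}\ud v\le Ch^{2}\cdot\tfrac{h^{-\sigma-1}}{\sigma+1}=Ch^{1-\sigma}$, where the tail converges because $\sigma=2(1-\beta-\gamma)+\beta d/\alpha>0$ under our hypothesis.

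\textbf{Main obstacle.} The scaling estimates behind \eqref{E:G-x} and \eqref{E:G-t2} are routine once the Mittag–Leffler bounds are recorded; the delicate point is \eqref{E:G-t1}, which needs the splitting at $|t-s|$ (with the separate regime $|t-s|\ge s$) and the monotonicity of $v\mapsto v^{\beta+\gamma-2}(1+cv^{\beta}|\xi|^{\alpha})^{-1}$. It is precisely here that the restriction $\gamma\le\Ceil{\beta}-\beta$ (equivalently $\beta+\gamma\le 1$, hence $\sigma>0$) is indispensable: it makes the tail integral $\int_{h}^{\infty}v^{-\sigma-2}\ud v$ converge and produces the exponent $2(\beta+\gamma)-1-d\beta/\alpha$. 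A secondary subtlety is the case analysis in $\gamma$ in part (i) ensuring that $\theta<\Theta-d$ is exactly the integrability threshold at $u=0$.
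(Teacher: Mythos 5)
Your overall strategy --- Plancherel plus the scaling $\xi\mapsto u^{-\beta/\alpha}\eta$ and the uniform bound $|E_{\beta,q}(-w)|\le C(1+w)^{-1}$ --- is sound, and in part (i) it is genuinely different from, and more elementary than, the paper's argument: the paper first integrates in time via Lemma \ref{L:ML-Est} and then controls $\int_{\R^d}(1-\cos(\xi\cdot(x-y)))(1+|\xi|^{\Theta})^{-1}\ud\xi$ through hypergeometric and Fox H-function asymptotics, whereas your interpolation $|1-e^{-i\vartheta}|^{2}\le C|\vartheta|^{\theta}$ reduces everything to one clean scaling computation, and your identification of $\alpha(1-\sigma)/\beta$ with $\Theta-d$ (equality for $\gamma<1/2$, inequality otherwise) correctly locates the integrability threshold at $u=0$. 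The proof of \eqref{E:G-t2} coincides with the paper's. For \eqref{E:G-t1} the paper proceeds quite differently: it expands the square and uses the complete monotonicity \eqref{E:E-CM} of $E_{\beta,\beta+\gamma}$ (available precisely because $\beta\le\min(1,\beta+\gamma)$) to bound the cross term from below by a multiple of $(t-r)^{-d\beta/\alpha}$, after which the three terms telescope. Your derivative--increment argument is a legitimate alternative, but as written it contains a gap.

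The gap is in the region where you invoke $\widehat{Y}(u+h,\xi)-\widehat{Y}(u,\xi)=\int_{u}^{u+h}\partial_{v}\widehat{Y}(v,\xi)\,\ud v$. Since the majorant of $|\partial_v\widehat Y|$ is decreasing, its supremum over $[u,u+h]$ is attained at $v=u$, so after integrating in $\xi$ you get the bound $Ch^{2}u^{-\sigma-2}$; because $\sigma+2>1$ (indeed $\sigma\ge\beta d/\alpha>0$ here), the integral $\int_{0}^{s-h}u^{-\sigma-2}\ud u$ over the region you declared, namely $(0,s-h)$, diverges at $u=0$, and the passage to ``$Ch^{2}\int_{h}^{s}v^{-\sigma-2}\ud v$'' is unjustified. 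The splitting must be made at the \emph{lower} end, at $u=h$, not at $u=s-h$: apply the crude bound $2|\widehat{Y}(u+h,\xi)|^{2}+2|\widehat{Y}(u,\xi)|^{2}$ on $(0,h\wedge s)$, where $\int_{0}^{h}u^{-\sigma}\ud u+\int_{h}^{2h}u^{-\sigma}\ud u\le Ch^{1-\sigma}$, and the derivative bound on $(h,s)$, where $Ch^{2}\int_{h}^{s}u^{-\sigma-2}\ud u\le Ch^{1-\sigma}$ does follow; the interval $(s-h,s)$ then needs no separate treatment, since $u+h\le t$ there and $u\ge h$ once $s\ge 2h$ (otherwise it is absorbed into the crude regime). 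Your closing remark ``splitting at $|t-s|$'' suggests you had exactly this in mind, but the body of the argument says the opposite. Two cosmetic points: the Podlubny bound you quote is valid for all $\beta\in(0,2)$ (you need this in part (i), which is stated for $\beta\in(0,2)$, not only $\beta\le1$), and the monotonicity of $v\mapsto v^{\beta+\gamma-2}(1+cv^{\beta}|\xi|^{\alpha})^{-1}$ requires only $\beta+\gamma\le2$.
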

\begin{proof}
(i) Fix $t>0$. By Plancherel's theorem and \eqref{E:FY},
the left hand side of \eqref{E:G-x} is equal to
\begin{align*}
&\frac{1}{(2\pi)^d}\int_0^t \ud r\:(t-r)^{2(\beta+\gamma-1)}\int_{\R^d} \ud \xi \:
E_{\beta,\beta+\gamma}^2
\left(-2^{-1}\nu(t-r)^\beta |\xi|^\alpha\right)\;
\left|e^{-i\xi\cdot x}-e^{-i\xi\cdot y}\right|^2\\
=&\frac{1}{(2\pi)^d}\int_{\R^d} \ud \xi\:
\left(1-\cos(\xi\cdot (x-y))\right) \int_0^t\ud r\:(t-r)^{2(\beta+\gamma-1)}
\;E_{\beta,\beta+\gamma}^2\left(-2^{-1}\nu(t-r)^\beta |\xi|^\alpha\right)\\
\le&
\frac{1}{(2\pi)^d}\int_{\R^d} \ud \xi\:
\left(1-\cos(\xi\cdot (x-y))\right)
\frac{C_{\beta,\gamma,T}}{1+|\xi|^{2\alpha+\frac{\alpha}{\beta}\min(0,2\gamma-1)}}
\end{align*}
where we have applied Lemma \ref{L:ML-Est} in the last step (see also the proof of Lemma \ref{L:Dalang}).
Denote $\Theta:=2\alpha+\frac{\alpha}{\beta}\min(0,2\gamma-1)$.
Because $1-\cos(x)\le 2\wedge \left(x^2/2\right)$ for all $x\in\R$,
we only need to bound
\begin{align*}
\int_{\R^d}\ud \xi\:\frac{2\wedge
\left[|x-y|\: |\xi| \right/\sqrt{2}\:]^2}{1+|\xi|^{\Theta}}
\le C'
\Bigg(&\quad
|x-y|^{-d}\int_0^{\sqrt{2}}\frac{u^{d+1}}{(1+|x-y|^{-1}u)^\Theta}\ud u\\
&+
|x-y|^{\Theta-d}
\int_{\sqrt{2}}^\infty\frac{2}{u^{\Theta+1-d}}\ud u
\Bigg)
\end{align*}
The second integral on the right hand side of the above inequality is finite
provided that $\Theta>d$, which is Dalang's condition.
By \cite[15.6.1]{NIST2010}, for some constant $C>0$,
\[
\int_0^{\sqrt{2}}\frac{u^{d+1}}{(1+|x-y|^{-1}u)^\Theta}\ud u
=C\lMr{2}{F}{1}(\Theta,2+d,3+d;-\sqrt{2} |x-y|^{-1}),
\]
which is true under the condition that $d+3>d+2>0$.
%
By \cite[2.9.15]{KilbasSaigo04H},
\[
\int_0^{\sqrt{2}}\frac{u^{d+1}}{(1+|x-y|^{-1}u)^\Theta}\ud u
=C''
\FoxH{1,2}{2,2}{\sqrt{2}|x-y|^{-1}}{(-1-d,1),\:(1-\Theta,1)}{(0,1),\: (-2-d,1)}.
\]
Since $\Delta=0$, by \cite[Theorem 1.7]{KilbasSaigo04H}, for all $\theta\in (0,\min(\Theta,2+d))$,
\[
\int_0^{\sqrt{2}}\frac{u^{d+1}}{(1+|x-y|^{-1}u)^\Theta}\ud u = O(|x-y|^{\theta-d}), \quad \text{as $|x-y|\rightarrow 0$.}
\]
Combining these cases, we have proved (i).
\bigskip

\noindent(ii) Denote the left hand side of \eqref{E:G-t1} by $I$.
Apply Plancherel's theorem and use \eqref{E:E-CM},
\begin{align*}
  I=C \int_0^s\ud r\: \int_{\R^d} \ud \xi\:  \Big|\quad&
 (t-r)^{\beta+\nu-1} E_{\beta,\beta+\nu}\left(-2^{-1}\nu(t-r)^\beta |\xi|^\alpha \right)\\
 -&(s-r)^{\beta+\nu-1}E_{\beta,\beta+\nu}\left(-2^{-1}\nu(s-r)^\beta |\xi|^\alpha\right)
\Big|^2.
\end{align*}
Then by Lemma \ref{L:Y^2} below,
\[
I= C C_\sharp\int_0^s \ud r \left[(t-r)^{2(\beta+\gamma-1)-d\beta/\alpha}+(s-r)^{2(\beta+\gamma-1)-d\beta/\alpha}\right]
- 2C \int_0^s\ud r [(t-r)(s-r)]^{ \beta+\gamma-1 } \: H(r),
\]
where
\[
H(r)=\int_{\R^d}
E_{\beta,\beta+\gamma}(-2^{-1}\nu(t-r)^\beta|\xi|^\alpha)
E_{\beta,\beta+\gamma}(-2^{-1}\nu(s-r)^\beta|\xi|^\alpha)\ud \xi.
\]
By \eqref{E:E-CM} and $t\ge s$,
\begin{align*}
 H(r)&\ge\int_{\R^d}
E_{\beta,\beta+\gamma}^2(-2^{-1}\nu(t-r)^\beta|\xi|^\alpha)
\ud \xi\\
&=
(t-r)^{-2(\beta+\gamma-1)}
\int_{\R^d} (t-r)^{2(\beta+\gamma-1)}
E_{\beta,\beta+\gamma}^2(-2^{-1}\nu(t-r)^\beta|\xi|^\alpha)
\ud \xi\\
&=
(t-r)^{-2(\beta+\gamma-1)} \int_{\R^d} Y(t-r,y)^2 \ud y\\
&= C_\sharp (t-r)^{-d\beta/\alpha},
\end{align*}
where in the last step we have applied Lemma \ref{L:Y^2}.
Because  $\beta+\gamma\le 1$, we see that
\begin{align*}
\int_0^s\ud r [(t-r)(s-r)]^{ \beta+\gamma-1 } \: H(r)
  \ge C_{\sharp}\int_0^s \ud r  (t-r)^{2(\beta+\gamma)-1-d\beta/\alpha}.
\end{align*}
Denote $\rho:=2(\beta+\gamma)-1-d\beta/\alpha$. Note that $\rho>0$ is implied by Dalang's condition \eqref{E:Dalang}.
Therefore,
\begin{align*}
I&\le
\frac{C C_\sharp}{\rho}\left[t^{\rho}-(t-s)^{\rho} + s^{\rho} -2((t-s)^\rho-s^\rho)\right]\le \frac{C C_\sharp}{\rho}(t-s)^\rho.
\end{align*}
This proves \eqref{E:G-t1}.
As for \eqref{E:G-t2}, by a similar reasoning, we have
\begin{align*}
\int_s^t\ud r\int_{\R^d}\ud z \;
Y^2\left(t-r,x-z\right)
&\le
C C_\sharp \int_s^t\ud r\: (t-r)^{2(\beta+\nu-1)-d\beta/\alpha}=\frac{C C_\sharp}{\rho} (t-s)^{\rho}\;.
\end{align*}
This completes the proof of Proposition \ref{P:G-SD}.
\end{proof}

The following lemma is a slight extension of \cite[Lemma 1]{MijenaNane14ST} from the case where $\gamma=1-\beta$ to a general $\gamma$.
\begin{lemma}\label{L:Y^2}
Assume that $d<2\alpha$, $\beta\in (0,2)$, and $\gamma\ge 0$.
Then
\[
\int_{\R^d} Y_{\alpha,\beta,\gamma,d}^2(t,x) \ud x = C_{\sharp} \: t^{2(\beta+\gamma-1)-d\beta/\alpha},
\]
for all $t>0$, where
\begin{align}\label{E:Csharp}
C_{\sharp} :=
\frac{2}{\Gamma(d/2)(2\pi \nu)^{d/2}}\int_0^\infty u^{d-1}E_{\beta,\beta+\gamma}^2(-u^\alpha)\ud u.
\end{align}
\end{lemma}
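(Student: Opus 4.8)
The plan is to evaluate $\int_{\R^d}Y_{\alpha,\beta,\gamma,d}^2(t,x)\,\ud x$ on the Fourier side, where by \eqref{E:FY} the transform of $Y$ is fully explicit, and then to reduce the resulting radial integral to the claimed one-dimensional integral by a scaling change of variables; the hypothesis $d<2\alpha$ will enter only to guarantee integrability.

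First I would record, from \eqref{E:FY}, that $\calF Y_{\alpha,\beta,\gamma,d}(t,\cdot)(\xi)=t^{\beta+\gamma-1}E_{\beta,\beta+\gamma}\bigl(-2^{-1}\nu t^\beta|\xi|^\alpha\bigr)$, and check that this function lies in $L^2(\R^d)$ under $d<2\alpha$ (this is the integrability check below), so that $Y(t,\cdot)\in L^2(\R^d)$ and Plancherel's theorem applies, giving
\begin{align*}
\int_{\R^d} Y_{\alpha,\beta,\gamma,d}^2(t,x)\,\ud x
&=\frac{1}{(2\pi)^d}\int_{\R^d}\bigl|\calF Y(t,\cdot)(\xi)\bigr|^2\ud\xi\\
&=\frac{t^{2(\beta+\gamma-1)}}{(2\pi)^d}\int_{\R^d}E_{\beta,\beta+\gamma}^2\bigl(-2^{-1}\nu t^\beta|\xi|^\alpha\bigr)\ud\xi .
\end{align*}
Since the integrand is radial, passing to polar coordinates produces the surface area $2\pi^{d/2}/\Gamma(d/2)$ of the unit sphere in $\R^d$ and reduces the integral to $\int_0^\infty r^{d-1}E_{\beta,\beta+\gamma}^2(-2^{-1}\nu t^\beta r^\alpha)\,\ud r$. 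The substitution $u=(2^{-1}\nu t^\beta)^{1/\alpha}r$ turns the Mittag--Leffler argument into $-u^\alpha$ and produces the factor $(2^{-1}\nu t^\beta)^{-d/\alpha}=2^{d/\alpha}\nu^{-d/\alpha}t^{-\beta d/\alpha}$. Collecting the powers of $t$ yields precisely $t^{2(\beta+\gamma-1)-d\beta/\alpha}$, and collecting the numerical and $\nu$-dependent prefactors yields the constant $C_\sharp$ displayed in \eqref{E:Csharp}, multiplied by the one-dimensional integral $\int_0^\infty u^{d-1}E_{\beta,\beta+\gamma}^2(-u^\alpha)\,\ud u$.

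The only step that needs an estimate rather than bookkeeping --- and where $d<2\alpha$ is used --- is the convergence of $\int_0^\infty u^{d-1}E_{\beta,\beta+\gamma}^2(-u^\alpha)\,\ud u$. Near $u=0$ the Mittag--Leffler function is bounded (it tends to $1/\Gamma(\beta+\gamma)$), so $u^{d-1}$ is integrable there because $d\ge 1$. Near $u=\infty$, the asymptotics of the Mittag--Leffler function (Theorem~1.6 of \cite{Podlubny99FDE}, valid for all $\beta\in(0,2)$) give $|E_{\beta,\beta+\gamma}(-u^\alpha)|\le C\,u^{-\alpha}$ for $u$ large, so the integrand is $O(u^{d-1-2\alpha})$, which is integrable at infinity exactly when $d-1-2\alpha<-1$, i.e. $d<2\alpha$. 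This simultaneously justifies the $L^2$-membership invoked at the start and legitimizes the term-by-term integration used in the change of variables (writing $E_{\beta,\beta+\gamma}^2$ as a power series and applying Fubini, as in the proof of Lemma~\ref{L:f}).

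I do not expect a genuine obstacle here: once one is on the Fourier side the argument is a pure scaling computation, and the hypothesis $d<2\alpha$ plays no role beyond the elementary integrability check just described. The only place that calls for slight care is the Mittag--Leffler decay bound when $\beta\in(1,2)$, where $E_{\beta,\beta+\gamma}(-u^\alpha)$ oscillates; but its modulus still decays like $u^{-\alpha}$, which is all that is needed.
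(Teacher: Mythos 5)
Your proposal is correct and follows essentially the same route as the paper's proof: Plancherel's theorem applied to \eqref{E:FY}, polar coordinates contributing the factor $2\pi^{d/2}/\Gamma(d/2)$, the scaling substitution $u=(2^{-1}\nu t^\beta)^{1/\alpha}r$, and the Mittag--Leffler decay $|E_{\beta,\beta+\gamma}(-u^\alpha)|\le C u^{-\alpha}$ to see that $d<2\alpha$ is exactly the integrability condition at infinity. One small remark: your substitution correctly produces the prefactor $(2/\nu)^{d/\alpha}$, which agrees with the exponent $d/2$ appearing in the displayed constant \eqref{E:Csharp} only when $\alpha=2$; this harmless discrepancy is present in the paper's own computation as well and does not affect the power of $t$ or the finiteness of $C_\sharp$, which are all that is used downstream.
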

\begin{proof}
By Plancherel's theorem and \eqref{E:FY},
\begin{align*}
\int_{\R^d} Y(t,x)^2 \ud x
&
=\frac{t^{2(\beta+\gamma-1)}}{(2\pi)^d}
\int_{\R^d} E_{\beta,\beta+\gamma}^2(-2^{-1}\nu t^{\beta}|\xi|^\alpha) \ud \xi\\
&
=
\frac{2\pi^{d/2}}{\Gamma(d/2)}\frac{t^{2(\beta+\gamma-1)}}{(2\pi)^d}
\int_{0}^\infty r^{d-1} E_{\beta,\beta+\gamma}^2(-2^{-1}\nu t^{\beta}r^\alpha) \ud r\\
&=
\frac{2\pi^{d/2}}{\Gamma(d/2)}\frac{t^{2(\beta+\gamma-1)-d\beta/\alpha}}{(2\pi)^d}
(2/\nu)^{d/2}\int_0^\infty u^{d-1}E_{\beta,\beta+\gamma}^2(-u^\alpha)\ud u.
\end{align*}
Note that by the asymptotic property of the Mittag-Leffler function (\cite[Theorem 1.7]{Podlubny99FDE}),
the last integral is finite if $d<2\alpha$.
\end{proof}

The corresponding results to the next Proposition for the SHE, the SFHE, and the SWE can be found in
\cite[Proposition 5.3]{ChenDalang13Heat}, \cite[Proposition 4.7]{ChenDalang14FracHeat}, and \cite[Lemma 3.2]{ChenDalang14Wave}, respectively.
We need some notation: for $\tau>0$, $\alpha>0$ and $(t,x)\in\R_+^*\times\R^d$,
denote
\begin{align*}
B_{t,x,\tau,\alpha}:=\left\{(t',x')\in\R_+^*\times\R^d:\: 0 \le t'\le
t+\tau,\: |x-x'|\le \alpha\right\}.
\end{align*}

\begin{proposition}\label{P:G-Margin}
Suppose that $\beta\in(0,2)$ and $\gamma\in [0,\Ceil{\beta}-\beta]$.
Then for all $(t,x)\in\R_+^*\times\R^d$, there exists a constant $A>0$ such that for all
$(t',x')\in B_{t,x,1/2,1}$ and all $s \in [0,t')$ and $y\in \R^d$ with
$|y|\ge A$, we have that $Y\left(t'-s,x'-y\right) \le Y\left(t+1-s,x-y\right)$.
\end{proposition}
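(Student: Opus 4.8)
The plan is to read the inequality off from the scaling identity \eqref{E:YScale} and the large-$|x|$ asymptotics of $Y_{\alpha,\beta,\gamma,d}(1,\cdot)$ in Lemma~\ref{L:YAsy}. Set $\tau_1:=t'-s$ and $\tau_2:=t+1-s$. From $(t',x')\in B_{t,x,1/2,1}$ one has $0<\tau_1\le t+\tfrac12$, $\tau_2\le t+1$, and — the one geometric fact driving everything — $\tau_2-\tau_1=t+1-t'\ge\tfrac12$; write $R:=|x-y|$, $R':=|x'-y|$, so $|R-R'|\le1$ and $R,R'\to\infty$ as $|y|\to\infty$. By \eqref{E:YScale}, $Y(\tau,z)=\tau^{\kappa}Y(1,\tau^{-\beta/\alpha}z)$ with $\kappa:=\beta+\gamma-1-d\beta/\alpha$, so Lemma~\ref{L:YAsy} provides, for every $\varepsilon>0$, a radius $M_\varepsilon$ such that, once $|y|\ge|x|+1+M_\varepsilon(t+1)^{\beta/\alpha}$, the bounds $(1-\varepsilon)\Phi(\tau,z)\le Y(\tau,z)\le(1+\varepsilon)\Phi(\tau,z)$ hold for $z\in\{x-y,\,x'-y\}$ uniformly over $\tau\in(0,t+1]$, where $\Phi(\tau,z)=A_\alpha\,\tau^{2\beta+\gamma-1}|z|^{-(d+\alpha)}$ if $\alpha<2$ and $\Phi(\tau,z)=A_2\,\tau^{\kappa-a\beta/2}|z|^{a}\exp(-b\,\tau^{-\mu}|z|^{c})$ if $\alpha=2$, with $A_\alpha,A_2>0$, $c=\tfrac{2}{2-\beta}>1$ and $\mu:=\tfrac{c\beta}{2}=\tfrac{\beta}{2-\beta}>0$ (the leading coefficients are positive, so the ratios below are well defined for $|y|$ large).

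For $\alpha=2$ the two-sided bounds reduce the statement to estimating
\[
\frac{Y(\tau_1,x'-y)}{Y(\tau_2,x-y)}\ \le\ \frac{1+\varepsilon}{1-\varepsilon}\Big(\tfrac{\tau_1}{\tau_2}\Big)^{\kappa-\frac{a\beta}{2}}\Big(\tfrac{R'}{R}\Big)^{a}\exp\!\Big(b\,\tau_2^{-\mu}R^{c}-b\,\tau_1^{-\mu}(R')^{c}\Big),
\]
and the idea is that the exponent runs to $-\infty$ fast enough to absorb the prefactors, uniformly in $s$. Here $(R'/R)^{a}$ is bounded, $(\tau_1/\tau_2)^{\kappa-a\beta/2}\le(t+1)^{P}\tau_1^{-P}$ with $P:=|\kappa-\tfrac{a\beta}{2}|$, and — using $(R')^{c}\ge(R-1)^{c}\ge R^{c}-cR^{c-1}$ by convexity (valid since $c>1$) together with $\tau_1^{-\mu}-\tau_2^{-\mu}\ge\theta_t\,\tau_1^{-\mu}$, a consequence of $\tau_2-\tau_1\ge\tfrac12$ and $\tau_2\le t+1$ with $\theta_t:=1-\big(1-\tfrac{1}{2(t+1)}\big)^{\mu}\in(0,1)$ — the exponent is at most $-b\theta_t\tau_1^{-\mu}R^{c}+bc\,\tau_1^{-\mu}R^{c-1}\le-\tfrac{b\theta_t}{2}\tau_1^{-\mu}R^{c}$ once $R\ge 2c/\theta_t$. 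Since for $R$ large the map $\tau_1\mapsto\tau_1^{-P}\exp(-\tfrac{b\theta_t}{2}\tau_1^{-\mu}R^{c})$ is decreasing on $(0,t+\tfrac12]$, the ratio is $\le C_t'\exp(-\tfrac{b\theta_t}{2}(t+1)^{-\mu}R^{c})$, which $\to0$ as $|y|\to\infty$; hence it is $\le1$ for $|y|$ beyond a threshold $A$. The mechanism is that the stretched-exponential spatial decay, whose rate stays $\ge b(t+1)^{-\mu}>0$, beats the at-most-polynomial blow-up of the prefactors as $s\uparrow t'$.

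For $\alpha\in(0,2)$ the spatial decay exponent $|z|^{-(d+\alpha)}$ carries no $\tau$-dependence, so the same computation collapses to $\frac{Y(\tau_1,x'-y)}{Y(\tau_2,x-y)}\le\frac{1+\varepsilon}{1-\varepsilon}\,(\tau_1/\tau_2)^{2\beta+\gamma-1}(R/R')^{d+\alpha}$, and one concludes from $R/R'\le R/(R-1)\to1$, $\tau_1/\tau_2\le 1-\tfrac{1}{2(t+1)}<1$ and the monotonicity of $\tau\mapsto\tau^{2\beta+\gamma-1}$ (i.e.\ $2\beta+\gamma\ge1$ in the range at hand): taking $\varepsilon$ small and then $|y|$ large forces the right-hand side strictly below $1$; the heat/$\alpha$-stable endpoint $\beta=1,\gamma=0$ is subsumed ($2\beta+\gamma=2$, and for $\alpha=2$ one even has $Y=\Phi$). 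The step I expect to need the most care — the only genuinely delicate one — is this non-uniformity as $s\uparrow t'$ (equivalently $\tau_1\downarrow0$): one may not first let $|y|\to\infty$ and then $\tau_1\to0$, so each estimate must be uniform over $\tau_1\in(0,t+\tfrac12]$, which is precisely why $\tau_1^{-\mu}$ is kept inside the exponent when $\alpha=2$ and why the sign of $2\beta+\gamma-1$, not a limit, is used when $\alpha<2$. All constants produced depend only on $(t,x,\alpha,\beta,\gamma,\nu,d)$, so an $A=A(t,x)$ with the stated property exists.
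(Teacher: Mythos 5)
Your proof follows the same route as the paper's: both rest on the scaling identity \eqref{E:YScale}, the tail asymptotics of Lemma \ref{L:YAsy}, and the single geometric input $\tau_2-\tau_1=t+1-t'\ge\tfrac12$, and your $\alpha\in(0,2)$ case is essentially the paper's Case II verbatim. The difference is that for $\alpha=2$ the paper stops after sorting out the sign of the prefactor exponent and defers the exponential comparison to the proof of Proposition 6.1 of \cite{Chen14Time}, whereas you carry it out: the chain $(R')^c\ge R^c-cR^{c-1}$, then $\tau_1^{-\mu}-\tau_2^{-\mu}\ge\theta_t\,\tau_1^{-\mu}$, the absorption of $bc\,\tau_1^{-\mu}R^{c-1}$ for $R\ge 2c/\theta_t$, and the uniform-in-$\tau_1$ control of $\tau_1^{-P}\exp(-\tfrac{b\theta_t}{2}\tau_1^{-\mu}R^c)$ is correct and makes the argument self-contained, with the uniformity as $s\uparrow t'$ (the genuinely delicate point) handled properly. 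Two small remarks. First, that last map is \emph{increasing}, not decreasing, on $(0,t+\tfrac12]$ once $R$ is large: its logarithmic derivative is $\tau^{-1}\left(K\mu\tau^{-\mu}-P\right)$ with $K=\tfrac{b\theta_t}{2}R^c$, positive for $K$ large; the bound you then write down is the value at the right endpoint, so the conclusion is the intended one and only the word is wrong. Second, in the $\alpha<2$ case you need $2\beta+\gamma>1$ strictly, so that $(\tau_1/\tau_2)^{2\beta+\gamma-1}\le\left(1-\tfrac{1}{2(t+1)}\right)^{2\beta+\gamma-1}<1$ can absorb the factors $\tfrac{1+\varepsilon}{1-\varepsilon}$ and $(R/R')^{d+\alpha}$; this is not implied by the stated hypotheses $\beta\in(0,2)$, $\gamma\in[0,\Ceil{\beta}-\beta]$ (for instance $d=1$, $\alpha=1.9$, $\beta=0.3$, $\gamma=0.35$ even satisfies Dalang's condition yet has $2\beta+\gamma<1$, in which case $Y(\tau_1,x'-y)\sim A_\alpha\tau_1^{2\beta+\gamma-1}|x'-y|^{-(d+\alpha)}$ blows up as $\tau_1\downarrow 0$ at fixed $y$ while $Y(t+1-s,x-y)$ stays bounded, so the claimed inequality genuinely fails). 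The paper's own proof makes the identical unjustified assertion (``because $\beta>1/2$''), so this is a restriction you share with the source rather than a gap you introduced, but it is worth recording that the $\alpha<2$ half of the proposition really does require it.
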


\begin{proof}
Without loss of generality, assume that $\nu=2$.

\vspace{0.5em}\noindent{\bf Case I.~~}
We first prove the case where $\alpha=2$.
The proof here simplifies the arguments of \cite[Proposition 6.1]{Chen14Time}.
Fix $(t,x)\in\R_+^*\times\R^d$.
By the scaling property and the asymptotic property of $Y$, we have that
\[
\frac{Y(t+1-s,x-y)}{Y(t'-s,x'-y)}
\approx \left(\frac{t'-s}{t+1-s}\right)^{f(\beta)}
\frac{|x-y|^a}{|x'-y|^a}\exp\left(
\frac{b|x'-y|^c}{(t'-s)^{\beta c/2}}
-\frac{b|x-y|^c}{(t+1-s)^{\beta c/2}}
\right),
\]
as $|y|\rightarrow\infty$, where  the constants $a$, $b$ and $c$ are defined in \eqref{E:abc},
and
\[
f(\beta)=1+\frac{d\beta}{2}-\beta-\nu + \frac{a\beta}{2}.
\]
Notice that
\begin{align}\label{E_:t-s}
 \frac{t+1-s}{t'-s} = 1+\frac{t+1-t'}{t'-s} \ge
1+ \frac{t+1-t'}{t'} \ge
\frac{t+1}{t+1/2} = 1+ \frac{1}{2t +1} >1\;.
\end{align}
If $f(\beta)\le 0$, then
\[
\left(\frac{t'-s}{t+1-s}\right)^{f(\beta)} = \left(\frac{t+1-s}{t'-s}\right)^{|f(\beta)|}\ge 1.
\]
If $f(\beta)> 0$, then
\[
\left(\frac{t'-s}{t+1-s}\right)^{f\left(\beta\right)}\ge
\left(\frac{t'-s}{t+1}\right)^{\left|f\left(\beta\right)\right|}
= (t+1)^{-\left|f\left(\beta\right)\right|}
\exp\left(\left|f\left(\beta\right)\right|\;\log(t'-s)\right).
\]
The rest arguments are the same as the proof of \cite[Proposition 6.1]{Chen14Time}. We will not repeat here.

\vspace{0.5em}\noindent{\bf Case II.~~} Now we consider the case when $\alpha\in(0,2)$.
By the scaling property and the asymptotic property of $Y$, we have that
\[
\frac{Y(t+1-s,x-y)}{Y(t'-s,x'-y)}
\approx \left(\frac{t'-s}{t+1-s}\right)^{1-2\beta-\nu}
\left(\frac{|x'-y|}{|x-y|}\right)^{d+\alpha},
\]
as $|y|\rightarrow\infty$.
Because $\beta>1/2$ and $\gamma\ge 0$, we see that $1-2\beta-\gamma<0$. Hence, by \eqref{E_:t-s},
\[
\left(\frac{t'-s}{t+1-s}\right)^{1-2\beta-\nu}
=\left(\frac{t+1-s}{t'-s}\right)^{2\beta+\nu-1}
\ge \left(1+\frac{1}{2t+1}\right)^{2\beta+\nu-1}>1.
\]
On the other hand,
\[
\frac{|x'-y|}{|x-y|}\ge
\frac{|y|-|x'|}{|x|+|y|}\ge \frac{|y|-(|x'-x|+|x|)}{|x|+|y|}
\ge
\frac{|y|-1+|x|}{|x|+|y|} \rightarrow 1,
\]
as $|y|\rightarrow \infty$.
Therefore, we can choose a large constant $A$, such that for all $|y|\ge A$ and  all $(t',x')\in B_{t,x,1/2,1}$
and $s\in [0,t']$,
\[
\frac{Y(t+1-s,x-y)}{Y(t'-s,x'-y)}>1.
\]
This completes the proof of Proposition \ref{P:G-Margin}.
\end{proof}

\begin{proposition}\label{P:G-FD}
For all $(t,x)\in\R_+\times\R^d$, $1<\beta<2$ and $\gamma\in [0,2-\beta]$,
we have
\[
\lim_{(t',x')\rightarrow (t,x)} \iint_{\R_+\times\R^d}\ud s\ud y\: \left(
Y\left(t'-s,x'-y\right)
-Y\left(t-s,x-y\right)
\right)^2=0.
\]
\end{proposition}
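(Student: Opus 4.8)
The plan is to move to the Fourier side and then run a dominated-convergence argument in which the majorants are allowed to vary. Extend $Y(r,\cdot)$ by $0$ for $r\le 0$ and write
\[
\Phi(r,\xi):=\calF Y(r,\cdot)(\xi)=r^{\beta+\gamma-1}E_{\beta,\beta+\gamma}\!\left(-2^{-1}\nu r^\beta|\xi|^\alpha\right),\qquad r\ge 0,\ \xi\in\R^d,
\]
using \eqref{E:FY}. Since $Y(r,\cdot)$ is radial, $\Phi$ is real and $y\mapsto Y(r,a-y)$ has spatial Fourier transform $e^{-i\xi\cdot a}\Phi(r,\xi)$, so Plancherel's theorem rewrites the left-hand side as
\begin{align*}
L(t',x')&:=\iint_{\R_+\times\R^d}\ud s\,\ud y\:\bigl(Y(t'-s,x'-y)-Y(t-s,x-y)\bigr)^2\\
&=\frac{1}{(2\pi)^d}\int_0^\infty\!\ud s\int_{\R^d}\!\ud\xi\:\Bigl|\Indt{s<t'}e^{-i\xi\cdot x'}\Phi(t'-s,\xi)-\Indt{s<t}e^{-i\xi\cdot x}\Phi(t-s,\xi)\Bigr|^2.
\end{align*}
Here and below I assume Dalang's condition \eqref{E:Dalang}, which is precisely what makes $L(t',x')$ (equivalently, the $L^2$-norm of $Y$) finite.

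I would first record two facts. (a) Because $1<\beta<2$ forces $\beta+\gamma-1>0$, the map $(r,\xi)\mapsto\Phi(r,\xi)$ is continuous on $[0,\infty)\times\R^d$ with $\Phi(0,\xi)=0$; hence, for every $(s,\xi)$ with $s\ne t$, the integrand above converges to $0$ as $(t',x')\to(t,x)$, since the indicators stabilize, $\Phi(t'-s,\xi)\to\Phi(t-s,\xi)$, and $e^{-i\xi\cdot x'}\to e^{-i\xi\cdot x}$. As $\{s=t\}$ is Lebesgue-null, this is almost-everywhere convergence of the integrand to $0$. (b) By Lemma \ref{L:Y^2} together with Plancherel's theorem,
\[
\int_{\R^d}\Phi(r,\xi)^2\,\ud\xi=(2\pi)^d\,C_\sharp\,r^{\,2(\beta+\gamma-1)-d\beta/\alpha},\qquad r>0,
\]
and the exponent exceeds $-1$ by \eqref{E:Dalang}, so $t'\mapsto\int_0^{t'}\ud r\int_{\R^d}\ud\xi\:\Phi(r,\xi)^2$ is finite and continuous on $[0,\infty)$.

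For the domination, fix a sequence $(t'_n,x'_n)\to(t,x)$ and let $f_n$ be the integrand of $L(t'_n,x'_n)$. By $|a-b|^2\le 2|a|^2+2|b|^2$ we have $f_n\le g_n$ with
\[
g_n(s,\xi):=2\,\Indt{s<t'_n}\Phi(t'_n-s,\xi)^2+2\,\Indt{s<t}\Phi(t-s,\xi)^2,
\]
which converges a.e.\ to $g(s,\xi):=4\,\Indt{s<t}\Phi(t-s,\xi)^2$. Changing variables $r=t'_n-s$ and using (b),
\[
\iint_{\R_+\times\R^d}\!g_n(s,\xi)\,\ud s\,\ud\xi=2\int_0^{t'_n}\!\ud r\!\int_{\R^d}\!\ud\xi\,\Phi(r,\xi)^2+2\int_0^{t}\!\ud r\!\int_{\R^d}\!\ud\xi\,\Phi(r,\xi)^2\;\longrightarrow\;4\int_0^{t}\!\ud r\!\int_{\R^d}\!\ud\xi\,\Phi(r,\xi)^2=\iint_{\R_+\times\R^d}\!g,
\]
all quantities finite, the convergence being exactly the continuity from (b). The generalized dominated convergence theorem (Pratt's lemma: $f_n\to f$ a.e., $|f_n|\le g_n$, $g_n\to g$ a.e., $\int g_n\to\int g<\infty$ imply $\int f_n\to\int f$) then gives $L(t'_n,x'_n)=\frac{1}{(2\pi)^d}\iint f_n\to 0$, and since the sequence was arbitrary the claim follows.

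The only real obstacle is this domination step: the time-increment part of $f_n$ cannot be majorized by a single fixed integrable function, because the jump point $s=t'_n$ of the indicator moves with $n$ while the pointwise-in-time supremum of $\Phi(\cdot,\xi)^2$ is too large in $\xi$ to be integrable on $\R^d$ under \eqref{E:Dalang} alone. Allowing $g_n$ to vary and invoking the generalized convergence theorem circumvents this, the closed form of $\int_{\R^d}\Phi(r,\xi)^2\,\ud\xi$ from Lemma \ref{L:Y^2} supplying both the finiteness of $g_n$ and the continuity needed for $\int g_n\to\int g$. The hypotheses $1<\beta<2$ and $\gamma\in[0,2-\beta]=[0,\Ceil{\beta}-\beta]$ are used only to guarantee $\beta+\gamma-1>0$ (regularity of $\Phi$ at $r=0$) and to keep the statement aligned with Propositions \ref{P:G-SD} and \ref{P:G-Margin}; the Fourier argument itself works for any $\gamma\ge 0$ compatible with \eqref{E:Dalang}.
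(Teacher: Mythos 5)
Your argument is correct, but it takes a genuinely different route from the paper. The paper disposes of this proposition in physical space: it invokes Proposition \ref{P:G-Margin}, which supplies a \emph{fixed} dominating function $Y(t+1-s,x-y)$ for $|y|$ large (uniformly over $(t',x')$ near $(t,x)$), and then follows the dominated-convergence argument of \cite[Proposition 6.4]{Chen14Time}; that route leans on the nonnegativity of $Y$ in the fast-diffusion regime (Theorem \ref{T:NonY}, available since Dalang's condition forces $d\le 3$) and on the large-$|x|$ asymptotics of the Fox H-function. You instead pass to the Fourier side via \eqref{E:FY} and Plancherel, observe that the integrand tends to $0$ a.e.\ (using $\beta+\gamma-1>0$ so that $\Phi(r,\xi)\to 0$ as $r\to 0_+$, together with the boundedness of $E_{\beta,\beta+\gamma}(-\cdot)$ on $[0,\infty)$), and close with Pratt's generalized dominated convergence theorem, where the convergence of the varying majorants' integrals is exactly the continuity of $t'\mapsto\int_0^{t'}\!\!\int_{\R^d}\Phi(r,\xi)^2\,\ud\xi\,\ud r$ furnished by Lemma \ref{L:Y^2}. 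Your diagnosis of why a single fixed majorant fails is also accurate: $\sup_{0<r\le t+1}\Phi(r,\xi)^2$ is integrable in $\xi$ only under the strictly stronger condition $2(\beta+\gamma)-2-d\beta/\alpha>0$, so the varying-majorant device is genuinely needed on the Fourier side. What each approach buys: the paper's is uniform with the treatment of the other Green-function estimates and reuses Proposition \ref{P:G-Margin}, whereas yours is shorter and more robust — it needs neither the sign of $Y$ nor any pointwise asymptotics of the H-function, only the explicit Fourier transform and the scaling of $\|Y(r,\cdot)\|_{L^2}^2$. One small point of hygiene: as you note, Dalang's condition \eqref{E:Dalang} must be assumed (it is implicit in the paper, since the proposition is only used under it), and the restriction $\gamma\le 2-\beta$ plays no role in your argument.
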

\begin{proof}
This proposition is a consequence of Proposition \ref{P:G-Margin}.
The proof follows the same arguments as the proof of \cite[Proposition 6.4]{Chen14Time}.
\end{proof}

\subsection{Estimations of the kernel function $\calK$}
Let $G: \R_+\times\R^d\mapsto \R$ with $d\in\bbN$, $d \ge 1$ be a Borel measurable function.

\begin{assumption}\label{A:Upper}
The function $G: \R_+\times\R^d\mapsto \R$ has the following properties:
\begin{enumerate}[(1)]
 \item There is a nonnegative function $\calG(t,x)$, called {\it reference kernel function}, and constants $C_0>0$,
$\sigma<1$ such that
\begin{align}\label{E:BdScSG}
G(t,x)^2\le \frac{C_0}{t^\sigma}\; \calG(t,x)\;,\quad\text{for all $(t,x)\in\R_+\times\R^d$.}
\end{align}
\item The reference kernel function $\calG(t,x)$ satisfies the following {\it sub-semigroup property}: for some constant $C_1>0$,
\begin{align}\label{E:SG}
\int_{\R^d} \ud y\: \calG\left(t,x-y\right)\calG\left(s,y\right)\le
C_1\; \calG\left(t+s,x\right)\;,\quad\text{for all $t, s>0$ and
$x\in\R^d$.}
\end{align}
\end{enumerate}
\end{assumption}

\begin{assumption}\label{A:Low}
The same as Assumption \ref{A:Upper} except that the two ``$\le$'' in \eqref{E:BdScSG} and \eqref{E:SG}
are replaced by ``$\ge$''.
We call the property \eqref{E:SG}  with ``$\le$''  replaced by ``$\ge$'' the {\it super-semigroup property}.
\end{assumption}

\begin{proposition}\label{P:ST-Con}
Under conditions \eqref{E:Dalang} and \eqref{E:CaseA},
the function $Y(t,x)$ satisfies Assumption \ref{A:Upper} with the reference kernel
$\calG_{\alpha,\beta}(t,x)$ defined in \eqref{E:calG}, two nonnegative constants
$C_0$ and $C_1$, depending on $(\alpha,\beta,\gamma,\nu,d)$, and $\sigma$ defined in \eqref{E:sigma}.
\end{proposition}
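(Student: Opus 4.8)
The plan is to verify the two clauses of Assumption~\ref{A:Upper} separately: the scaled pointwise bound \eqref{E:BdScSG}, with reference kernel $\calG_{\alpha,\beta}$ and exponent $\sigma$ of \eqref{E:sigma} (which satisfies $\sigma<1$ by \eqref{E:sigma<1}), and the sub-semigroup property \eqref{E:SG} of $\calG_{\alpha,\beta}$.

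First I would reduce \eqref{E:BdScSG} to $t=1$. Both sides obey the same scaling: \eqref{E:YScale}, upon squaring and using $2(\beta+\gamma-1-d\beta/\alpha)=-\sigma-d\beta/\alpha$, gives $Y(t,x)^2=t^{-\sigma}\,t^{-d\beta/\alpha}\,Y(1,t^{-\beta/\alpha}x)^2$, while a direct inspection of \eqref{E:calG} shows $\calG_{\alpha,\beta}(t,x)=t^{-d\beta/\alpha}\calG_{\alpha,\beta}(1,t^{-\beta/\alpha}x)$. Hence \eqref{E:BdScSG} is equivalent to the boundedness on $\R^d$ of the radial function $z\mapsto Y(1,z)^2/\calG_{\alpha,\beta}(1,z)$. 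On $\R^d\setminus\{0\}$ this ratio is continuous, since the Fox H-function in \eqref{E:Yab} is continuous for positive argument (its parameter $a^*=2-\beta$ is strictly positive) and $\calG_{\alpha,\beta}(1,\cdot)$ is continuous and strictly positive. It stays bounded as $z\to0$: under \eqref{E:CaseA}, Lemma~\ref{L:HAt0} and Remark~\ref{R:YZero} place us in Cases~3, 4 or 7--9, so $Y(1,z)$ has a finite limit, and $\calG_{\alpha,\beta}(1,0)>0$. And it tends to $0$ as $|z|\to\infty$: by Lemma~\ref{L:YAsy}, if $\alpha<2$ then $Y(1,z)^2=O(|z|^{-2(d+\alpha)})=o(|z|^{-(d+1)})=o(\calG_{\alpha,\beta}(1,z))$ because $d+2\alpha>1$; and if $\alpha=2$ then $Y(1,z)^2=O(|z|^{2a}e^{-2b|z|^c})$ with $c\ge\Floor{\beta}+1$, the inequality being strict unless $\beta=1$, where $c=2$ but $2b=1/\nu>1/(4\nu)$ and $a=-2\gamma\le0$, so again $Y(1,z)^2=o(\calG_{\alpha,\beta}(1,z))$. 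A continuous function on $\R^d$ that is bounded near the origin and vanishes at infinity is bounded, which yields $C_0$.

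For \eqref{E:SG} I would distinguish the three forms of $\calG_{\alpha,\beta}$ in \eqref{E:calG}. If $\alpha\in(0,2)$, then $\calG_{\alpha,\beta}(t,\cdot)$ is the normalized Poisson kernel $P_{t^{\beta/\alpha}}$ of $\R^d$, so $\calG_{\alpha,\beta}(t,\cdot)*\calG_{\alpha,\beta}(s,\cdot)=P_{t^{\beta/\alpha}+s^{\beta/\alpha}}$ by the semigroup property of the Poisson semigroup; since $t^{\beta/\alpha}+s^{\beta/\alpha}$ and $(t+s)^{\beta/\alpha}$ are each comparable to $\max(t,s)^{\beta/\alpha}$ with constants depending only on $\beta/\alpha$, and $P_\tau(x)\le(\tau/\tau')P_{\tau'}(x)$ for $\tau\ge\tau'$ while $P_\tau(x)\le(\tau'/\tau)^dP_{\tau'}(x)$ for $\tau\le\tau'$, one obtains \eqref{E:SG} with an explicit $C_1$ depending only on $d$ and $\beta/\alpha$. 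If $\alpha=2$ and $\beta\in[1,2)$ (so $c_\beta=1$, $\Floor{\beta}+1=2$), then $\calG_{2,\beta}(t,\cdot)=g_{t^\beta}$ is the Gaussian of variance $2\nu t^\beta$, hence the convolution equals $g_{t^\beta+s^\beta}$, and superadditivity $(t+s)^\beta\ge t^\beta+s^\beta$ together with $(t+s)^\beta\le2^\beta(t^\beta+s^\beta)$ gives $g_{t^\beta+s^\beta}(x)\le2^{\beta d/2}g_{(t+s)^\beta}(x)$. Finally, if $\alpha=2$ and $\beta\in(0,1)$, then \eqref{E:Dalang} forces $d=1$ and $\calG_{2,\beta}(t,\cdot)$ is a rescaled two-sided exponential density; here \eqref{E:SG} is exactly the estimate of \cite[Proposition~5.2]{Chen14Time}.

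The main obstacle is this last case. The two-sided exponential kernel is not stable under convolution, so there is no exact semigroup identity, and one must estimate $\calG_{2,\beta}(t,\cdot)*\calG_{2,\beta}(s,\cdot)$ by hand: split the spatial integral into the set where one of the two factors is already exponentially small and its complement, and exploit that $(t+s)^{\beta/2}\ge\max(t,s)^{\beta/2}$, so the target kernel decays no faster than either factor. This is precisely the computation carried out for $\alpha=2$, $d=1$ in \cite{Chen14Time}, so it can be quoted rather than repeated; everything else in the proof is either a scaling identity or a comparison against the Poisson or Gaussian semigroup.
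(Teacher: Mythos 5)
Your proof is correct and follows essentially the same route as the paper's: the scaling identity reduces \eqref{E:BdScSG} to the boundedness of $Y(1,\cdot)^2/\calG_{\alpha,\beta}(1,\cdot)$, which is checked at the origin via Lemma \ref{L:HAt0}/Remark \ref{R:YZero} and at infinity via Lemma \ref{L:YAsy}, while \eqref{E:SG} comes from the exact Poisson/Gaussian semigroup identities together with elementary comparisons of $t^{\theta}+s^{\theta}$ with $(t+s)^{\theta}$, deferring the two-sided exponential case to \cite{Chen14Time}. One small remark: in that last case ($\alpha=2$, $\beta\in(0,1)$) it is the conjunction of \eqref{E:Dalang} with \eqref{E:CaseA} that forces $d=1$ (Dalang's condition alone would allow $d\le 3$ when $\gamma\ge 1/2$), but since both hypotheses are assumed in the proposition your reduction is valid.
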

\begin{proof}
The proof is similar to that of \cite[Proposition 5.8]{Chen14Time}.
We first note that $\sigma<1$ is implied by Dalang's condition \eqref{E:Dalang}; see \eqref{E:sigma<1}.

{\bigskip\bf\noindent Case I.~~} We first consider the case where $\alpha=2$. In this case,
\begin{align*}
\calG_{\alpha,\beta}(t,x)=
\left(4\nu\pi t^\beta\right)^{-d/2}
\exp\left(-\frac{1}{4\nu}\left(t^{-\beta/2}|x|\right)^{\Floor{\beta}+1}\right).
\end{align*}
Notice that
\[
\frac{2}{2-\beta}>\Floor{\beta}+1, \quad\text{for $\beta\in (0,1)\cup (1,2)$},
\]
and when $\beta=1$, the constant $b$ defined in \eqref{E:abc} reduces to $1/(2\nu)$, which is bigger than $1/(4\nu)$. Hence,
by \eqref{E:YAsy} and \eqref{E:YZero}, we see that
\[
\sup_{(t,x)\in\R_+\times\R^d}\frac{Y(t,x)^2}{t^{-\sigma}\calG_{2,\beta}(t,x)}
= \sup_{y\in\R^d}\frac{Y(1,y)^2}{\calG_{2,\beta}(1,y)} =: C_0<\infty.
\]
Note that in the application of \eqref{E:YZero} in the above equations
we have used the fact that Dalang's condition \eqref{E:Dalang} implies $d<2\alpha$.
When $\beta=1$, we see that
\[
\calG_{2,1}(t,x) =
\left(4\nu\pi t\right)^{-d/2}
\exp\left(-\frac{|x|^2}{4\nu t}\right),
\]
and hence, $C_1=1$ and \eqref{E:SG} becomes equality in this case. When $\beta\in (0,1)$,
\[
\calG_{2,\beta}(t,x) =
\left(4\nu\pi t^\beta\right)^{-d/2}
\exp\left(-\frac{|x|}{4\nu t^{\beta/2}}\right)
\le
\prod_{i=1}^d \left(4\nu\pi t^\beta\right)^{-1/2}
\exp\left(-\frac{|x_i|}{4\nu t^{\beta/2}}\right).
\]
Then by Lemma 5.10 of \cite{Chen14Time}, for some nonnegative constant $C_1$,
\[
\int_{\R^d}\calG_{2,\beta}(t-s,x-y)\calG_{2,\beta}(s,y)\ud y\le C_1\: \calG_{2,\beta}(t,x).
\]
When $\beta\in (1,2)$,
\[
\calG_{2,\beta}(t,x) =
\left(4\nu\pi t^\beta\right)^{-d/2}
\exp\left(-\frac{|x|^2}{4\nu t^{\beta}}\right)
=\calG_{2,1}(t^\beta,x).
\]
Hence, by the semigroup property for the heat kernel,
\[
\int_{\R^d}\calG_{2,\beta}(t-s,x-y)\calG_{2,\beta}(s,y)\ud y= \calG_{2,1}((t-s)^\beta+s^{\beta},x)
\le 2^{d(1-\beta)}\calG_{2,\beta}(t,x),
\]
where in the last step we have applied the inequalities:
\[
2^{1-\beta}t^\beta\le (t-s)^\beta+s^\beta\le t^\beta,\qquad\text{for $\beta\in(1,2)$.}
\]
Hence, in this case, $C_1=2^{(1-\beta)d}$.
Therefore, Assumption \ref{A:Upper} is satisfied.

{\bigskip\bf\noindent Case II.~~} We now consider the case where $\alpha\ne 2$. In this case,
\begin{align*}
\calG_{\alpha,\beta}(t,x) =
\frac{c_n \:t^{\beta/\alpha}}{\left(t^{2\beta/\alpha}+|x|^2\right)^{(d+1)/2}}= G_p(t^{\beta/\alpha},x),
\end{align*}
where $G_p(t,x)$ is the Poisson kernel (see \cite[Theorem 1.14]{SteinWeiss71}).
By the scaling property and the asymptotic property of $Y(1,x)$ at $0$ and $\infty$ shown in \eqref{E:YZero}
and \eqref{E:YAsy}, for some nonnegative constant $C$,
\[
Y(t,x)\le \frac{C\: t^{\beta+\gamma-1-d\beta/\alpha}}{\left(1+t^{-2\beta/\alpha}|x|^2\right)^{(d+\alpha)/2}}
\le
\frac{C\: t^{\beta+\gamma-1-d\beta/\alpha}}{\left(1+t^{-2\beta/\alpha}|x|^2\right)^{(d+1)/4}},
\]
where the second inequality is due to $(d+\alpha)/2\ge (d+1)/4$, which is equivalent to $d\ge 1-2\alpha$.
Hence,
\[
\sup_{(t,x)\in\R_+\times\R^d}\frac{Y(t,x)^2}{t^{-\sigma}\calG_{\alpha,\beta}(t,x)}
= \sup_{y\in\R^d}\frac{Y(1,y)^2}{\calG_{\alpha,\beta}(1,y)} =: C_0<\infty.
\]
 Then, it is ready to see that
\[
\calG_{\alpha,\beta} (t,x)= G_p(t^{\beta/\alpha},x)
\]
By the semigroup property of the Poisson kernel, we have that
\begin{align}\label{E_:Poisson}
 \int_{\R^d}\calG_{\alpha,\beta}(t-s,x-y)\calG_{\alpha,\beta}(s,y)\ud y
=G_p\left(s^{\beta/\alpha}+(t-s)^{\beta/\alpha},x\right).
\end{align}
Then use the inequalities
\begin{equation}
\label{E_:IneqPoisson}
\begin{aligned}
  t^{\beta/\alpha}\le & s^{\beta/\alpha}+(t-s)^{\beta/\alpha}\le 2^{1-\beta/\alpha} t^{\beta/\alpha}
  & \text{if $\beta/\alpha\le 1$,} \\
  2^{1-\beta/\alpha} t^{\beta/\alpha} \le & s^{\beta/\alpha}+(t-s)^{\beta/\alpha}\le t^{\beta/\alpha}
  & \text{if $\beta/\alpha> 1$,}
\end{aligned}
\end{equation}
to conclude that for some constant $C_1>0$,
\[
\int_{\R^d}\calG_{\alpha,\beta}(t-s,x-y)\calG_{\alpha,\beta}(s,y)\ud y
\le C_1 G_p\left(t^{\beta/\alpha},x\right) = C_1 \calG_{\alpha,\beta}(t,x).
\]
This completes the proof of Proposition \ref{P:ST-Con}.
\end{proof}

\begin{proposition}\label{P:ST-Con2}
Under \eqref{E:Dalang} and the first two cases of \eqref{E:4cases},
the function $Y(t,x)$ satisfies Assumption \ref{A:Low} with the reference kernel
$\ubar{\calG}_{\alpha,\beta}(t,x)$ defined in \eqref{E:calG-L}, two nonnegative constants
$C_0$ and $C_1$, depending on $(\alpha,\beta,\gamma,\nu,d)$, and
$\sigma$ defined in \eqref{E:sigma}.
\end{proposition}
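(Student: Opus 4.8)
The plan is to verify the two parts of Assumption~\ref{A:Low} for $G=Y$ with the reference kernel $\ubar{\calG}_{\alpha,\beta}$ of \eqref{E:calG-L} and $\sigma$ of \eqref{E:sigma}, running the lower-bound counterpart of the argument behind Proposition~\ref{P:ST-Con} (and behind the analogous proposition in \cite{Chen14Time}). Two preliminary remarks: since the first two cases of \eqref{E:4cases} are assumed, Theorem~\ref{T:NonY}~(1) gives $Y(t,x)\ge 0$, and $\sigma<1$ by \eqref{E:sigma<1}, so the hypotheses of Assumption~\ref{A:Low} are meaningful.

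\emph{Step 1 (scaling reduction).} By \eqref{E:YScale}, $Y(t,x)^2=t^{2(\beta+\gamma-1-d\beta/\alpha)}\,Y\!\left(1,t^{-\beta/\alpha}x\right)^2$, and a direct check of exponents shows that the reference kernel scales in exactly the same way,
\[
t^{-\sigma}\,\ubar{\calG}_{\alpha,\beta}(t,x)=t^{2(\beta+\gamma-1-d\beta/\alpha)}\,\ubar{\calG}_{\alpha,\beta}\!\left(1,t^{-\beta/\alpha}x\right).
\]
Hence \eqref{E:BdScSG} with ``$\le$'' replaced by ``$\ge$'' is equivalent to showing
\[
C_0:=\inf_{y\in\R^d}\frac{Y(1,y)^2}{\ubar{\calG}_{\alpha,\beta}(1,y)}>0.
\]

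\emph{Step 2 (positivity of the infimum).} The function $y\mapsto\ubar{\calG}_{\alpha,\beta}(1,y)$ is continuous and strictly positive on all of $\R^d$, while $y\mapsto Y(1,y)$ is continuous on $\R^d\setminus\{0\}$ and, by the integral representation of $Y(1,\cdot)$ obtained in the proof of Theorem~\ref{T:NonY}~(1) (a convolution of strictly positive Fox H-functions), strictly positive there. Thus $r(y):=Y(1,y)^2/\ubar{\calG}_{\alpha,\beta}(1,y)$ is continuous and positive on $\R^d\setminus\{0\}$, so it remains only to bound it below near $y=0$ and as $|y|\to\infty$. Near the origin, Lemma~\ref{L:HAt0} (in the form of Remark~\ref{R:YZero}, Dalang's condition ruling out Cases~5--6) shows $Y(1,y)$ tends either to a finite nonzero constant or to $+\infty$, whereas $\ubar{\calG}_{\alpha,\beta}(1,0)\in(0,\infty)$, so $\liminf_{y\to0}r(y)>0$. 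As $|y|\to\infty$ with $\alpha=2$, Lemma~\ref{L:YAsy} gives $Y(1,y)\sim A_2|y|^ae^{-b|y|^c}$ with $c=2/(2-\beta)$; the hypothesis $\beta\le1$ forces $c\le2$, and when $c=2$ (i.e.\ $\beta=1$) one has $2b=1/\nu$, exactly the Gaussian rate of $\ubar{\calG}_{2,\beta}(1,y)=(\nu\pi)^{-d/2}e^{-|y|^2/\nu}$; comparing the exponents and the polynomial prefactors yields $\liminf_{|y|\to\infty}r(y)>0$ (indeed $=+\infty$ when $\beta<1$). For $\alpha\in(0,2)$ the same conclusion follows from the tail $Y(1,y)\sim A_\alpha|y|^{-(d+\alpha)}$ of Lemma~\ref{L:YAsy} compared against the explicit form of $\ubar{\calG}_{\alpha,\beta}(1,\cdot)$. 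I expect this step to be the main obstacle: the uniform control of $r$ at spatial infinity is precisely what dictates the particular decay chosen for $\ubar{\calG}_{\alpha,\beta}$ in \eqref{E:calG-L}, and it is the only place where nonnegativity of $Y$ (hence the first two cases of \eqref{E:4cases}) and the bound $\beta\le1$ (hence $c\le2$) really enter.

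\emph{Step 3 (super-semigroup property).} For $\alpha=2$, put $g_\tau(x):=(\pi\tau)^{-d/2}e^{-|x|^2/\tau}$, so $g_\tau*g_{\tau'}=g_{\tau+\tau'}$ and $\ubar{\calG}_{2,\beta}(t,\cdot)=g_{\nu t^\beta}$; then $\int_{\R^d}\ubar{\calG}_{2,\beta}(t,x-y)\,\ubar{\calG}_{2,\beta}(s,y)\,\ud y=g_{\nu(t^\beta+s^\beta)}(x)$, and since $\beta\le1$ gives $(t+s)^\beta\le t^\beta+s^\beta\le 2^{1-\beta}(t+s)^\beta$, together with $g_{\tau_2}(x)\ge(\tau_1/\tau_2)^{d/2}g_{\tau_1}(x)$ for $\tau_1\le\tau_2$, this is $\ge 2^{-(1-\beta)d/2}\,\ubar{\calG}_{2,\beta}(t+s,x)$. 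For $\alpha\in(0,2)$, $\ubar{\calG}_{\alpha,\beta}(t,\cdot)$ is the Poisson kernel $P_{t^{\beta/\alpha}}$, so $\ubar{\calG}_{\alpha,\beta}(t,\cdot)*\ubar{\calG}_{\alpha,\beta}(s,\cdot)=P_{t^{\beta/\alpha}+s^{\beta/\alpha}}$; using that $t^{\beta/\alpha}+s^{\beta/\alpha}$ and $(t+s)^{\beta/\alpha}$ are comparable up to a factor $c$ depending only on $\beta/\alpha$ (sub- or super-additivity of $u\mapsto u^{\beta/\alpha}$, as in \eqref{E_:IneqPoisson}), together with the elementary bound $P_{a_2}\ge c^{-(d+2)}P_{a_1}$ whenever $c^{-1}a_1\le a_2\le c\,a_1$, one obtains the super-semigroup inequality with a constant $C_1>0$ depending only on $(\alpha,\beta,d)$. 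Combining Steps~1--3 gives Assumption~\ref{A:Low} for $Y$, which is the claim.
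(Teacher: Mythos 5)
Your proposal follows the paper's own proof step for step: the scaling reduction to $t=1$, the comparison of $Y(1,\cdot)^2$ with $\ubar{\calG}_{\alpha,\beta}(1,\cdot)$ via the asymptotics of Lemma \ref{L:YAsy} and Remark \ref{R:YZero}, and the super-semigroup property via the exact semigroup identities for the heat and Poisson kernels together with the sub-/super-additivity of $t\mapsto t^{\beta}$ (resp.\ $t^{\beta/\alpha}$). Steps 1 and 3 are correct and coincide with what the paper does.

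The problem is in Step 2, at spatial infinity; you rightly single this out as the delicate point, but the comparison you then assert goes the wrong way. For $\alpha\in(0,2)$, Lemma \ref{L:YAsy} gives $Y(1,y)^2\sim A_\alpha^2\,|y|^{-2(d+\alpha)}$, while $\ubar{\calG}_{\alpha,\beta}(1,y)\sim c_d\,|y|^{-(d+1)}$; since $2(d+\alpha)>d+1$ always, the ratio $r(y)=Y(1,y)^2/\ubar{\calG}_{\alpha,\beta}(1,y)\sim C\,|y|^{\,1-d-2\alpha}$ tends to $0$, so $\inf_y r(y)=0$ and no $C_0>0$ can make the pointwise lower bound $Y(t,x)^2\ge C_0\,t^{-\sigma}\ubar{\calG}_{\alpha,\beta}(t,x)$ hold. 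The same failure occurs for $\alpha=2$, $\beta=1$, $\gamma>1$ (admitted by the second case of \eqref{E:4cases}): there indeed $c=2$ and $2b=1/\nu$, so the Gaussian factors cancel exactly, but the remaining prefactor is $|y|^{2a}=|y|^{-4\gamma}\to0$. To be fair, the paper's own proof asserts the finiteness of $\sup_y\ubar{\calG}_{\alpha,\beta}(1,y)/Y(1,y)^2$ with no computation and is open to the identical objection; the defect lies in the choice of reference kernel \eqref{E:calG-L} for $\alpha<2$ (its tail must match that of $Y^2$, i.e.\ decay like $|y|^{-2(d+\alpha)}$, as in the lower-bound kernel used in \cite{ChenDalang14FracHeat}) rather than in your overall strategy. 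As written, however, your Step 2 establishes $C_0>0$ only for $\alpha=2$ with $\beta\in(0,1)$, or $\alpha=2$, $\beta=1$, $\gamma=0$.
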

\begin{proof}
The proof is similar to the proof of the previous proposition.
We have several cases.

{\bigskip\bf\noindent Case I:~}
When $\alpha=2$ and $\beta\in(0,1)$, by the scaling property \eqref{E:YScale}, and
the asymptotics at zero and infinity in \eqref{E:YZero} and \eqref{E:YAsy}, we see that
\begin{align}\label{E_:lowCaseI}
\sup_{(t,x)\in\R_+\times\R^d}\frac{t^{-\sigma} \ubar{\calG}_{2,\beta}(t,x)}{Y(t,x)^2}
= \sup_{y\in\R^d}\frac{\ubar{\calG}_{2,\beta}(1,y)}{Y(1,y)^2} =: \frac{1}{C_0}<+\infty.
\end{align}
By the semigroup property of the heat kernel,
\[
\int_{\R^d}\ubar{\calG}_{\alpha,\beta}(t-s,x-y)\ubar{\calG}_{\alpha,\beta}(s,y)\ud y=
\ubar{\calG}_{\alpha,\beta}\left((s^\beta+(t-s)^{\beta})^{1/\beta},x\right)
\ge 2^{(\beta-1)d/2}\ubar{\calG}_{\alpha,\beta}(t,x),
\]
where the last inequality is due to
\[
t^\beta\le (t-s)^\beta+s^\beta\le 2^{1-\beta} t^\beta,\qquad\text{for $\beta\in(0,1)$.}
\]

{\bigskip\bf\noindent Case II:~}
When $\alpha<2$ and $\beta\in(0,1\vee \alpha)$,
by \eqref{E:YScale} and \eqref{E:YAsy},
\[
\sup_{(t,x)\in\R_+\times\R^d}\frac{t^{-\sigma} \ubar{\calG}_{\alpha,\beta}(t,x)}{Y(t,x)^2}
= \sup_{y\in\R^d}\frac{\ubar{\calG}_{\alpha,\beta}(1,y)}{Y(1,y)^2} =: \frac{1}{C_0}<+\infty.
\]
By the super-semigroup property can be proved in the same way from \eqref{E_:Poisson} and \eqref{E_:IneqPoisson}.
\end{proof}

\subsection{A lemma on the initial data}

\begin{lemma}\label{L:InDt}
For all compact sets $K\subseteq \R_+^*\times\R^d$,
\[
\sup_{(t,x)\in K}\left(\left[1+J_0^2\right]\star \calK\right)(t,x)
<\infty,
\]
under the following two cases:
\begin{enumerate}[(1)]
 \item Both \eqref{E:Dalang} and \eqref{E:CaseA} are satisfied
 and the initial data satisfy \eqref{E:BddInit};
 \item Both \eqref{E:Dalang} and \eqref{E:CaseB} are satisfied
 and the initial data belong to $\calM_{\alpha,\beta}\left(\R\right)$.
\end{enumerate}
\end{lemma}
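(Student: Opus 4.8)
The plan is to reduce everything, via Theorem~\ref{T:K-bounds}(1), to an estimate for the convolution of $\calG_{\alpha,\beta}$ against $1+J_0^2$ on a compact time slab, and then to treat the two cases of the initial data separately. Since $\eqref{E:CaseB}$ implies $\eqref{E:CaseA}$, Theorem~\ref{T:K-bounds}(1) applies in both cases. Fix a compact $K\subseteq\R_+^*\times\R^d$ and put $T:=\sup_{(t,x)\in K}t<\infty$ and $\delta:=\inf_{(t,x)\in K}t>0$. For $(t,x)\in K$ and $0\le s\le t$, \eqref{E:calK-U}, the fact that $\sigma<1$ (see \eqref{E:sigma<1}) and $\exp\big(\lambda^{2/(1-\sigma)}\Upsilon(t-s)\big)\le\exp\big(\lambda^{2/(1-\sigma)}\Upsilon T\big)$ yield $\calK(t-s,x-y;\lambda)\le C_T\big(1\vee(t-s)^{-\sigma}\big)\calG_{\alpha,\beta}(t-s,x-y)$. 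Since $\int_{\R^d}\calG_{\alpha,\beta}(r,y)\,\ud y=1$ for all $r>0$ and $\int_0^T\big(1\vee r^{-\sigma}\big)\,\ud r<\infty$, the summand $1$ in $1+J_0^2$ contributes a quantity bounded on $K$, so it suffices to bound
\[
\mathcal{A}(t,x):=\int_0^t\big(1\vee(t-s)^{-\sigma}\big)\int_{\R^d}J_0^2(s,y)\,\calG_{\alpha,\beta}(t-s,x-y)\,\ud y\,\ud s
\]
uniformly over $(t,x)\in K$. In case (1) this is immediate: \eqref{E:BddInit} gives $|J_0(s,y)|\le\widehat{C}_T$ on $[0,T]\times\R^d$, so $\mathcal{A}(t,x)\le\widehat{C}_T^{\,2}\int_0^T\big(1\vee r^{-\sigma}\big)\,\ud r<\infty$.

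For case (2) we have $d=1$, $\alpha\in(1,2]$, and $J_0(s,y)=(Z(s,\cdot)*\mu)(y)$ if $\beta\le1$, while $J_0(s,y)=(Z^*(s,\cdot)*\mu_0)(y)+(Z(s,\cdot)*\mu_1)(y)$ if $\beta\in(1,2)$. The first ingredient is a pointwise bound for $Z$ and $Z^*$: by Remark~\ref{R:YZero} and Lemma~\ref{L:YAsy}, under \eqref{E:CaseB} the profiles $Z(1,\cdot)$ and $Z^*(1,\cdot)$ are finite near $0$ and decay, as $|x|\to\infty$, like $|x|^{-(1+\alpha)}$ when $\alpha<2$ and faster than $\calG_{2,\beta}(1,\cdot)$ when $\alpha=2$; together with the scaling relations for $Z,Z^*$ analogous to \eqref{E:YScale}, this gives $|Z(t,x)|+|Z^*(t,x)|\le C_T(1\vee t)\,\calP(t,x)$ on $(0,T]\times\R$, where $\calP:=\calG_{2,\beta}$ if $\alpha=2$ and $\calP(t,x):=t^{\beta}\big(t^{\beta(1+\alpha)/\alpha}+|x|^{1+\alpha}\big)^{-1}$ if $\alpha<2$. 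One checks that $\calP$ has constant total mass in $x$, a sub-semigroup property $\calP(s,\cdot)*\calP(r,\cdot)\le C\,\calP(s+r,\cdot)$ (Proposition~\ref{P:ST-Con} when $\alpha=2$, and the analogue of the fractional-heat estimate of \cite{ChenDalang14FracHeat} when $\alpha<2$), and a ``good-kernel'' inequality $\calP(s,a)\calP(s,b)\le C\,\calP(s,a-b)\big(\calP(s,a)+\calP(s,b)\big)$. Writing $\mu_\ast:=|\mu|$ ($\beta\le1$) or $|\mu_0|+|\mu_1|$ ($\beta\in(1,2)$), it follows that $|J_0(s,y)|\le C_T(1\vee s)(\calP(s,\cdot)*\mu_\ast)(y)$, and \eqref{E:InitData} supplies the control $\sup_{y\in\R}(\calP(s,\cdot)*\mu_\ast)(y)\le C_T\big(1\vee s^{-\beta/\alpha}\big)$ when $\alpha<2$ (from the $|z|^{-(1+\alpha)}$ weight) and the corresponding sub-Gaussian statement when $\alpha=2$ (from the $f_\beta$-condition).

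I would then split $\mathcal{A}(t,x)$ at $s=t/2$, using $t\ge\delta$ on $K$. On $[t/2,t]$ one has $s\ge\delta/2$, hence $(\calP(s,\cdot)*\mu_\ast)(y)\le C_\delta$ uniformly in $y$, so $J_0^2(s,y)\le C_{\delta,T}$ there and this piece is $\le C_{\delta,T}\int_0^T\big(1\vee r^{-\sigma}\big)\,\ud r<\infty$. On $[0,t/2]$ one has $t-s\ge\delta/2$, so $1\vee(t-s)^{-\sigma}\le C_\delta$ and $\calG_{\alpha,\beta}(t-s,\cdot)$ is a bounded function with tail $\lesssim|\cdot|^{-(d+1)}$ (sub-Gaussian if $\alpha=2$); expanding $J_0^2(s,y)\le C_T^2(1\vee s^2)\iint\mu_\ast(\ud z)\,\mu_\ast(\ud z')\,\calP(s,y-z)\,\calP(s,y-z')$, the good-kernel inequality rewrites the inner integral $\int_\R\calP(s,y-z)\calP(s,y-z')\calG_{\alpha,\beta}(t-s,x-y)\,\ud y$ in terms of $\calP(s,z-z')$ times $\big(\calP(s,\cdot)*\calG_{\alpha,\beta}(t-s,\cdot)\big)(x-z)+(z\leftrightarrow z')$, and the sub-semigroup property together with \eqref{E:InitData} bounds the remaining $z,z'$-integrals by $C_{\delta,T}\,s^{-\beta/\alpha}$. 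Since $\int_0^\delta(1\vee s^2)\,s^{-\beta/\alpha}\,\ud s<\infty$ (the exponent $-\beta/\alpha$ exceeds $-1$ in the cases covered by \eqref{E:CaseB}), the $[0,t/2]$-piece is bounded uniformly on $K$ as well, which finishes (2).

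\emph{The step I expect to be the main obstacle} is precisely the $s\downarrow0$ analysis in case (2): for measure-valued data $J_0^2(s,\cdot)$ may be unbounded, and to keep the $s$-integral convergent one must genuinely use (i) that on a compact $K\subseteq\R_+^*\times\R^d$ the time variable is bounded away from $0$, so the $(t-s)^{-\sigma}$ singularity of $\calK$ never collides with the $s\downarrow0$ singularity of $J_0^2$; and (ii) the precise decay built into $\calM_{\alpha,\beta}(\R)$ via \eqref{E:InitData}, matched against the sub-semigroup and good-kernel structure of the reference kernel $\calP$. This is where the exponent bookkeeping in $\beta$, $\alpha$ and $\sigma$, and the hypotheses \eqref{E:Dalang} and \eqref{E:CaseB}, are really needed.
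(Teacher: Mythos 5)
Your argument follows the paper's proof in all essentials: part (1) is the same one-line computation from the upper bound on $\calK$ given by Theorem \ref{T:K-bounds}(1) together with the unit mass of $\calG_{\alpha,\beta}$, and for part (2) the paper likewise reduces to the scaling/asymptotic bound $G(s,x)\le C s^{\eta-1-d\beta/\alpha}(1+|x|^{d+\alpha})^{-1}$, hence $\sup_y J_0(s,y)\lesssim s^{\eta-1-\beta/\alpha}$, and then defers the convolution estimate to Lemma 6.7 of \cite{Chen14Time} (for $\alpha=2$) and to Lemma 4.9(2) of \cite{ChenDalang14FracHeat} (for $\alpha<2$) --- which is precisely the sub-semigroup/good-kernel factorization you write out, so your text essentially supplies the details the paper outsources. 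The one assertion you should not leave as stated is that ``$-\beta/\alpha$ exceeds $-1$ in the cases covered by \eqref{E:CaseB}'': condition \eqref{E:CaseB} only says $\alpha>d=1$, so for $\beta\in(1,2)$ one can have $\beta\ge\alpha$, and the $s^{-\beta/\alpha}$ singularity produced by the $Z^**\mu_0$ term (where $\eta=1$) is then not integrable at $s=0$; the restriction $\beta<\alpha$ is automatic when $\beta\le 1$ and irrelevant for the $Z*\mu_1$ term (where $\eta=2$), and it is equally implicit in the paper's appeal to a $\beta=1$ reference, so this is a shared caveat of the fast-diffusion case rather than a defect peculiar to your route.
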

\begin{proof}
In both cases,  the kernel function $\calK(t,x)$ has the following upper bound
\[
\calK(t,x;\lambda)\le C_1 \calG_{\alpha,\beta}(t,x) \left(t^{-\sigma}+e^{C_2 t}\right).
\]
Part (1) is clear because
\[
\left(\left[1+J_0^2\right]\star \calK\right)(t,x)\le (1+\widehat{C}_t) (1\star\calK)(t,x)
=C_1 (1+\widehat{C}_t) \left(\frac{t^{1-\sigma}}{1-\sigma}+\frac{e^{C_2 t}-1}{C_2}\right),
\]
where $\sigma<1$ (see \eqref{E:sigma<1}).
The proof of part (2) requires more work.
The case when $\alpha=2$ is proved in Lemma 6.7 of \cite{Chen14Time}.
The proof for $\alpha\in (0,2)$ is similar to that of Lemma 4.9 in \cite{ChenDalang14FracHeat}.
Let
\[
 G_{\alpha,\beta,d}(t,x)= \pi^{-d/2} t^{\eta-1} |x|^{-d}
 \FoxH{2,1}{2,3}{\frac{ |x|^\alpha}{2^{\alpha-1}\nu t^\beta}}{(1,1),\:(\eta,\beta)}
 {(d/2,\alpha/2),\:(1,1),\:(1,\alpha/2)}
\]
with $\eta=\Ceil{\beta}$ in case of $Z$ and $\eta=1$ in case of $Z^*$.
Hence, we need only consider $J_0(t,x)=(|\mu|*G(t,\cdot))(x)$.
By the asymptotic properties both at infinity and at zero (see Lemma \ref{L:YAsy} and Remark \ref{R:YZero}),
we have that for all $t\in (0,T]$,
\[
G(t,x)= t^{\eta-1-d\beta/\alpha} G(1,t^{-\beta/\alpha}x)
\le \frac{C\: t^{\eta-1-d\beta/\alpha}}{1+|t^{-\beta/\alpha}x|^{d+\alpha}}
\le \frac{C\: t^{\eta-1-d\beta/\alpha} (1\vee T)^{d\beta}}{1+|x|^{d+\alpha}}.
\]
Thus, for $s\in (0,t]$,
\[
J_0(s,y)\le A C s^{\eta-1-d\beta/\alpha} (1\vee t)^{d\beta},
\]
where
\[
A=\sup_{y\in\R}\int_{\R}|\mu|(\ud y)\frac{1}{1+|x-y|^{1+\alpha}}.
\]
The rest of the proof follows line-by-line the proof of part (2) of Lemma 4.9 in \cite{ChenDalang14FracHeat}.
This completes the proof of Lemma \ref{L:InDt}.
\end{proof}

\subsection{Proof of Theorem \ref{T:ExUni}} \label{S:ExUni}

\begin{proof}[Proof of Theorem \ref{T:ExUni}]

The proof is the same as the proof of \cite[Theorem 3.1]{Chen14Time},
which in turn follows the same six steps as those in the proof of
\cite[Theorem 2.4]{ChenDalang13Heat} with some minor changes:

The proof relies on estimates on the kernel function $\calK(t,x)$, which is given by Proposition \ref{P:ST-Con}.

In the Picard iteration scheme, one needs to check the $L^p(\Omega)$-continuity of the stochastic integral.
This will guarantee that the integrand in the next step is again in $\calP_2$, via \cite[Proposition 3.4]{ChenDalang13Heat}.
Here, the statement of \cite[Proposition 3.4]{ChenDalang13Heat} is still true by replacing in its proof
\cite[Proposition 3.5]{ChenDalang13Heat} by either Proposition \ref{P:G-SD} for the slow diffusion equations or
Proposition \ref{P:G-FD} for the fast diffusion equations, and replacing \cite[Proposition 5.3]{ChenDalang13Heat} by Proposition \ref{P:G-Margin}.

In the first step of the Picard iteration scheme, the following property,
which determines the set of the admissible initial data, needs to be verified:
for all compact sets $K\subseteq \R_+\times\R^d$,
\[
\sup_{(t,x)\in K}\left(\left[1+J_0^2\right]\star
\calK \right) (t,x)<+\infty.
\]
For the SHE, this property is proved in \cite[Lemma 3.9]{ChenDalang13Heat}.
Here, Lemma \ref{L:InDt} gives the desired result with minimal requirements on the initial data.
This property, together with the calculation of  the upper bound on $\calK(t,x)$
in Theorem \ref{T:K-bounds}, guarantees that all the $L^p(\Omega)$-moments of $u(t,x)$ are finite.
This property is also used to establish uniform convergence of the Picard iteration scheme, hence $L^p(\Omega)$--continuity of $(t,x)\mapsto I(t,x)$.

The proof of \eqref{E:SecMom-Lower} is identical to that of the corresponding property in \cite[Theorem 2.4]{ChenDalang13Heat}.
This completes the proof of Theorem \ref{T:ExUni}.
\end{proof}
\subsection{Proof of Theorem \ref{T2:ExUni}}
\label{S2:ExUni}

\begin{proof}[Proof of Theorem \ref{T2:ExUni}]
The proof of Theorem \ref{T2:ExUni} is similar to that for Theorem \ref{T:ExUni}.
Because
\[
\widehat{C}_t= \sup_{(s,x)\in[0,t]\times\R^d} |J_0(s,x)|<\infty,\quad\text{for all $t>0$,}
\]
the Picard iterations in the proof of Theorem 2.4 \cite{ChenDalang13Heat} give the following the moment formula
\[
\Norm{u(t,x)}_p^2 \le 2 J(t,x)^2 + \left[\Vip^2+2 \widehat{C}_t^2\right] \left(1\star \widehat{\calK}_p\right)(t,x).
\]
Note that the function $\left(1\star\calK_p\right)(t,x)$ is a function of $t$ only. For convenience, we denote it as
\begin{align}\label{E:H}
 H(t;\lambda):=\int_0^t\ud s\int_{\R^d}\ud y \: \calK(s,y;\lambda).
\end{align}
Therefore, we need only to prove that $H(t;\lambda)$ is finite, which is proved in Lemma \ref{L:Ht} below.
This completes the proof of Theorem \ref{T2:ExUni}.
\end{proof}

\begin{lemma}\label{L:Ht}
For all $\alpha\in (0,2]$, $\beta\in(0,2)$, $\gamma\ge 0$, and $d\in\bbN$, under Dalang's condition \eqref{E:Dalang},
we have that
\[
H(t;\lambda)\le \exp\left(C \lambda^{\frac{2}{1-\sigma}} \: t\right),
\]
for all $t>0$ and $\lambda\in\R$, where $\sigma$ is defined in \eqref{E:sigma}
and $C$ is some constant depending on $\alpha$, $\beta$, $\gamma$ and $d$.
\end{lemma}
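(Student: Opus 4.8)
The plan is to evaluate $H(t;\lambda)$ almost explicitly and recognize it as a Mittag-Leffler function. Since $Y^2\ge 0$, every quantity below is nonnegative, so Tonelli's theorem lets me interchange $\sum_n$, $\int_0^t\ud s$ and $\int_{\R^d}\ud y$ at will. The first thing I would record is that integrating a space--time convolution over the space variable collapses it to a time convolution: writing $\bar f(t):=\int_{\R^d}f(t,x)\,\ud x$, one has $\int_{\R^d}(f\star g)(t,x)\,\ud x=(\bar f * \bar g)(t)$, where $*$ denotes convolution in the time variable alone. Iterating this over the $n$ space--time convolutions in the definition \eqref{E:Ln} of $\calL_n$ gives $\int_{\R^d}\calL_n(t,x)\,\ud x = h_0^{*(n+1)}(t)$, the $(n+1)$-fold time convolution of $h_0(t):=\int_{\R^d}\calL_0(t,x)\,\ud x=\int_{\R^d}Y^2(t,x)\,\ud x$.

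Next I would substitute Lemma \ref{L:Y^2} — whose hypothesis $d<2\alpha$ is contained in \eqref{E:Dalang} — which gives $h_0(t)=C_\sharp\, t^{2(\beta+\gamma-1)-d\beta/\alpha}=C_\sharp\, t^{a-1}$ with $a:=1-\sigma$, $\sigma$ as in \eqref{E:sigma} and $C_\sharp$ as in \eqref{E:Csharp}; note that \eqref{E:sigma<1} guarantees $a>0$, so $h_0$ is locally integrable and the convolutions are well defined. Iterating the Beta integral $\int_0^t (t-s)^{a-1}s^{a-1}\,\ud s=\frac{\Gamma(a)^2}{\Gamma(2a)}\,t^{2a-1}$ yields
\[
\int_{\R^d}\calL_n(t,x)\,\ud x = h_0^{*(n+1)}(t)= C_\sharp^{\,n+1}\,\frac{\Gamma(a)^{\,n+1}}{\Gamma\big((n+1)a\big)}\, t^{(n+1)a-1}.
\]
Summing the defining series \eqref{E:K} for $\calK$ term by term, integrating over $s\in[0,t]$, and using $ma\,\Gamma(ma)=\Gamma(ma+1)$, I obtain, with $z:=C_\sharp\,\Gamma(a)\,\lambda^2\,t^{a}$,
\[
H(t;\lambda)=\sum_{m=1}^{\infty}\frac{\big(C_\sharp\Gamma(a)\lambda^2 t^{a}\big)^{m}}{\Gamma(ma+1)}=\sum_{m=1}^{\infty}\frac{z^{m}}{\Gamma(ma+1)}=E_{a,1}(z)-1 .
\]

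Finally I would invoke the classical fact that $E_{a,1}$ is an entire function of order $1/a$ and type $1$ (see, e.g., \cite[Section 1.2]{Podlubny99FDE}): this produces a constant $C>0$, depending only on $a$, such that $E_{a,1}(z)-1\le \exp\big(C\,z^{1/a}\big)$ for all $z\ge 0$ — near $z=0$ the left side tends to $0$ while the right side is $\ge 1$, and for large $z$ the order/type estimate absorbs the multiplicative constant into the exponential. Substituting $z=C_\sharp\Gamma(a)\lambda^2 t^{a}$ and $a=1-\sigma$ turns $z^{1/a}$ into $\big(C_\sharp\Gamma(1-\sigma)\big)^{1/(1-\sigma)}\lambda^{2/(1-\sigma)}\,t$, which gives $H(t;\lambda)\le\exp\big(C'\lambda^{2/(1-\sigma)}t\big)$ with $C'$ depending on $\alpha,\beta,\gamma,\nu,d$, as claimed.

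The computation itself is routine; the only two points deserving care are that $\sigma<1$ (from \eqref{E:Dalang} via \eqref{E:sigma<1}) is exactly the integrability that makes the iterated time convolutions and the Beta integrals finite, and that the multiplicative constant produced by the order/type estimate for $E_{a,1}$ must be absorbed into the exponential — which is precisely where the innocuous ``$-1$'' in $E_{a,1}(z)-1$ (it forces the left-hand side to vanish as $z\to 0$) is needed.
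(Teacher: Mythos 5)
Your proposal is correct and follows essentially the same route as the paper: reduce to the spatial integral of $Y^2$ via Lemma \ref{L:Y^2}, iterate the Beta integral to produce the Mittag-Leffler series in $C_\sharp\Gamma(1-\sigma)\lambda^2 t^{1-\sigma}$, and absorb the asymptotics of the Mittag-Leffler function into the exponential. The only (cosmetic) difference is that you first integrate out the space variable to work with one-dimensional time convolutions and identify the sum exactly as $E_{1-\sigma,1}(z)-1$, whereas the paper runs the equivalent induction on $(1\star\calL_n)$ directly.
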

\begin{proof}
By Lemma \ref{L:Y^2},
\[
(1\star \calL_0)(t,x) \le  C_{\sharp}\int_0^t\ud s \: s^{2(\beta+\gamma-1)-d\beta/\alpha}= \frac{C_\sharp s^\theta}{\theta},
\]
where $C_\sharp$ is defined in \eqref{E:Csharp} and $\theta=1-\sigma$.
Note that $\theta>0$ is guaranteed by Dalang's condition \eqref{E:Dalang}.
Now we claim that, for $n\ge 0$,
\begin{align}\label{E:Induction}
(1\star \calL_n)(t,x)\le \frac{C_\sharp^{n+1} \Gamma\left(\theta\right)^{n+1} t^{(n+1)\theta}}{\Gamma((n+1)\theta+1)},
\end{align}
of which the case $n=0$ is just proved.
Assume that \eqref{E:Induction} holds for $n$. By the above calculations, we see that
\[
\left(1\star\calL_{n+1}\right)(t,x)\le
\frac{C_\sharp^{n+2} \Gamma\left(\theta\right)^{n+1}}{\Gamma((n+1)\theta+1)}
\int_0^t \ud s \: (t-s)^{(n+1)\theta} s^{\theta}
=\frac{C_\sharp^{n+2} \Gamma\left(\theta\right)^{n+2} t^{(n+2)\theta}}{\Gamma((n+2)\theta+1)}.
\]
Therefore,
\[
H(t;\lambda) =
\sum_{n=0}^\infty
\lambda^{2(n+1)}(1\star \calL_n)(t,x)
\le
E_{\theta,\theta+1}\left(C_\sharp\Gamma(\theta)\lambda^2 t^\theta \right).
\]
Then apply the asymptotic property of the Mittag-Leffler function (see, e.g., \cite[Theorem 1.3]{Podlubny99FDE}).
This completes the proof of Lemma \ref{L:Ht}.
\end{proof}

\appendix
\section{Appendix: Some properties of the Fox H-functions}
In this section, we follow the notation of \cite{KilbasSaigo04H}.
\begin{definition}\label{D:H}
Let  $m, n, p, q$ be integers such that  $0\leq m\leq q,  0\leq n\leq p$.  Let
  $a_i,  b_i\in \mathbb{C}$ be complex numbers and
  let $\alpha_j, \beta_j$ be positive numbers,  $i=1, 2, \cdots ,  p; j=1, 2, \cdots  ,  q$.
  Let the   set of poles of the gamma functions $\Gamma(b_j+\beta_js)$ doesn't intersect with that of the gamma functions $\Gamma(1-a_i-\alpha_is)$,
namely,
\[
\bigg\{b_{jl}=\frac{-b_j-l}{\beta_j},  l =0, 1, \cdots\bigg\}\bigcap \bigg\{a_{ik}=\frac{1-a_i+k}{\alpha_i},  k=0, 1, \cdots\bigg\}=\emptyset\
\]
for all $i=1, 2, \cdots ,  p$ and $ j=1, 2, \cdots ,  q$.
 Denote $$\mathcal{H}^{mn}_{pq}(s):=\frac{\prod_{j=1}^m \Gamma(b_j+\alpha_js)\prod_{i=1}^n\Gamma(1-a_i-\alpha_is)}{\prod_{i=n+1}^p\Gamma(a_j+\alpha_is)\prod_{j=m+1}^q \Gamma(1- b_j-\alpha_js)}, $$
The {\em Fox {\sl H}-function}
\[
H^{m,n}_{p,q}(z)\equiv H^{m,n}_{p,q}\bigg[z \bigg|\begin{array}{ccc}
(a_1, \alpha_1) & \cdots & (a_p, \alpha_p)\\
(b_1, \beta_1) & \cdots & (b_q, \beta_q)
\end{array} \bigg]
\]
is defined by   the following integral
\begin{equation}\label{E:FoxH}
H^{mn}_{pq}(z)=\frac{1}{2\pi i}\int_L \mathcal{H}^{mn}_{pq}(s) z^{-s} ds\,, \ \ z\in \mathbb{C}\,,
\end{equation}
where an empty product in \eqref{E:FoxH}  means  $1$, and
$L$ in \eqref{E:FoxH} is the infinite contour which separates all the points  $b_{jl}$ to the left and all the points
 $a_{ik}$ to the right of $L$.  Moreover, $L$  has one of the following forms:
\begin{enumerate}[(1)]
\item $L=L_{-\infty}$ is a left loop situated in a horizontal strip starting at point $-\infty+i\phi_1$ and terminating at point $-\infty+i\phi_2$ for some   $-\infty<\phi_1< \phi_2<\infty$
\item $L=L_{+\infty}$ is a right loop situated in a horizontal strip starting at point $+\infty+i\phi_1$ and terminating at point $\infty+i\phi_2$ for some  $-\infty<\phi_1< \phi_2<\infty$
\item $L=L_{i\gamma\infty}$ is a contour starting at point $\gamma-i\infty$ and terminating at point $\gamma+i\infty$ for some  $\gamma\in(-\infty,  \infty)$
\end{enumerate}
\end{definition}
According to \cite[Theorem 1.1]{KilbasSaigo04H}, the integral \eqref{E:FoxH} exists, for example,
when
\begin{align}\label{E:Delta}
\Delta:=\sum_{j=1}^q\beta_j-\sum_{i=1}^p\alpha_i\geq0 \quad\text{and}\quad L=L_{-\infty},
\end{align}
or when
\begin{align}\label{E:a*}
a^*:=\sum_{i=1}^n \alpha_i -\sum_{i=n+1}^p\alpha_i+\sum_{j=1}^m\beta_j-\sum_{j=m+1}^{q}\beta_j\geq 0
\quad\text{and}\quad L=L_{i\gamma \infty}.
\end{align}
The following two parameters of the Fox H-functions \eqref{E:FoxH} will be used in this paper:
\begin{align}\label{E:mu}
\mu=\sum_{j=1}^q b_j -\sum_{i=1}^p a_i + \frac{p-q}{2},
\end{align}
and
\begin{align}\label{E:delta}
\delta=\prod_{i=1}^p\alpha_i^{-\alpha_i}\prod_{j=1}^q \beta_j^{\beta_j}.
\end{align}

\begin{lemma}\label{L:H1/2}
For $b\in\bbC$ and $\beta>0$, there holds the relation
\[
\FoxH{2+m,n}{p,2+q}{z}{(a_i,\alpha_i)_{1,p}}{(b,\beta),(b+1/2,\beta),(b_j,\beta_j)_{1,q}}
=
2^{1-2b}\sqrt{\pi}\: \FoxH{1+m,n}{p,1+q}{4^\beta z}{(a_i,\alpha_i)_{1,p}}{(2b,2\beta),(b_j,\beta_j)_{1,q}}
\]
and
\[
\FoxH{m,n}{p,2+q}{z}{(a_i,\alpha_i)_{1,p}}{(b_j,\beta_j)_{1,q},\:(b,\beta),(b+1/2,\beta)}
=
4^{-b}\pi^{-1/2}\: \FoxH{1+m,n}{p,1+q}{4^{\beta} z}{(a_i,\alpha_i)_{1,p}}{(b_j,\beta_j)_{1,q},\:(2b,2\beta)}.
\]
\end{lemma}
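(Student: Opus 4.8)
The plan is to derive both identities directly from the Mellin--Barnes representation \eqref{E:FoxH} by inserting Legendre's duplication formula
\[
\Gamma(z)\,\Gamma\!\left(z+\tfrac12\right)=2^{\,1-2z}\sqrt{\pi}\;\Gamma(2z)
\]
and keeping track of the resulting powers of $2$. In both cases the net effect of the substitution is to merge two lower parameters of the $H$-function into one, to produce an overall constant ($2^{1-2b}\sqrt{\pi}$, resp.\ $4^{-b}\pi^{-1/2}$), and to leave behind a factor $2^{-2\beta s}$ which is absorbed into the kernel via $2^{-2\beta s}z^{-s}=(2^{2\beta}z)^{-s}=(4^{\beta}z)^{-s}$.

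For the first identity, the parameters $(b,\beta)$ and $(b+\tfrac12,\beta)$ occupy the first two slots of the lower list of $\FoxH{2+m,n}{p,2+q}{z}{(a_i,\alpha_i)_{1,p}}{(b,\beta),(b+1/2,\beta),(b_j,\beta_j)_{1,q}}$, so by Definition \ref{D:H} the integrand $\calH^{2+m,n}_{p,2+q}(s)$ carries the factor $\Gamma(b+\beta s)\,\Gamma(b+\tfrac12+\beta s)$ in its numerator. Replacing it, via the duplication formula with $z=b+\beta s$, by $2^{1-2b}\sqrt{\pi}\,2^{-2\beta s}\,\Gamma(2b+2\beta s)$ leaves exactly the integrand of the $H$-function with the pair collapsed to $(2b,2\beta)$, i.e.\ $\calH^{2+m,n}_{p,2+q}(s)=2^{1-2b}\sqrt{\pi}\,2^{-2\beta s}\,\calH^{1+m,n}_{p,1+q}(s)$; integrating against $z^{-s}$ gives the claim. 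The one nontrivial point is that both Mellin--Barnes integrals may be taken along the same contour $L$: the left poles of $\Gamma(b+\beta s)\Gamma(b+\tfrac12+\beta s)$, namely $s=-(b+\ell)/\beta$ and $s=-(b+\tfrac12+\ell)/\beta$ for $\ell=0,1,2,\dots$, are exactly the left poles of $\Gamma(2b+2\beta s)$, namely $s=-(2b+k)/(2\beta)$ for $k=0,1,2,\dots$ (split $k$ into even and odd), and no new right poles appear; moreover the defining parameters $\Delta$, $a^*$, $\mu$, $\delta$ and the pole-disjointness hypothesis of Definition \ref{D:H} are invariant under the merge, so the right-hand side exists precisely when the left-hand side does.

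For the second identity the situation is the mirror image: in $\FoxH{m,n}{p,2+q}{z}{(a_i,\alpha_i)_{1,p}}{(b_j,\beta_j)_{1,q},(b,\beta),(b+1/2,\beta)}$ the pair now sits in the last two lower slots, hence $\calH^{m,n}_{p,2+q}(s)$ contains $\Gamma(1-b-\beta s)\,\Gamma(\tfrac12-b-\beta s)$ in its \emph{denominator}. Duplication with $z=\tfrac12-b-\beta s$ gives $\Gamma(\tfrac12-b-\beta s)\,\Gamma(1-b-\beta s)=2^{\,2b+2\beta s}\sqrt{\pi}\,\Gamma(1-2b-2\beta s)$, so upon dividing we pick up the constant $4^{-b}\pi^{-1/2}$, the factor $2^{-2\beta s}$, and the integrand of the $H$-function in which the two lower parameters are merged into $(2b,2\beta)$. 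Because collapsing two denominator Gamma factors changes neither the poles that $L$ must separate nor the disjointness condition, no separate contour discussion is required here; absorbing $2^{-2\beta s}$ into the argument via $z\mapsto 4^{\beta}z$ then gives the stated identity.

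The computation is essentially mechanical, and the only place demanding attention is the rigor of the Mellin--Barnes manipulation in the first identity --- precisely the verification, sketched above, that the two integrals share an admissible contour. If one prefers to sidestep contours entirely, an equivalent route is to expand both $H$-functions in their residue series (as in \cite{KilbasSaigo04H}) and match them term by term; there the duplication formula appears directly as the ratio of consecutive Gamma coefficients, and the rescaling $z\mapsto 4^{\beta}z$ accounts for the ratio of the associated $\delta$-parameters. I expect the contour bookkeeping, rather than any genuine difficulty, to be the main thing to get right.
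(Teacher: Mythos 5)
Your proof is correct and is essentially the paper's own argument: substitute Legendre's duplication formula into the Mellin--Barnes integrand and absorb the leftover factor $2^{-2\beta s}$ into $(4^{\beta}z)^{-s}$; the paper carries this out for the first identity and dismisses the second with ``similarly,'' while you additionally supply the (routine but worthwhile) contour and pole bookkeeping. One small remark: your computation for the second identity merges two \emph{denominator} Gamma factors into one, which naturally leaves the superscript $m$ unchanged on the right-hand side, so the ``$1+m$'' printed in the statement appears to be a typo rather than a flaw in your argument.
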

\begin{proof}
Let
\[
\calH(s)=\frac{\prod_{j=1}^m \Gamma(b_j+\beta_j s) \prod_{i=1}^n\Gamma(1-a_i-\alpha_i s) }{\prod_{i=n+1}^p \Gamma(a_i+\alpha_i s) \prod_{j=m+1}^q\Gamma(1-b_j-\beta_j s) }\: .
\]
By the definition of the Fox H-function,
\[
\FoxH{2+m,n}{p,2+q}{z}{(a_i,\alpha_i)_{1,p}}{(b,\beta),(b+1/2,\beta),(b_j,\beta_j)_{1,q}}
=\frac{1}{2\pi i}\int_{\calL}\calH(s)\Gamma(b+\beta s)\Gamma(b+1/2+\beta s) z^{-s}\ud s.
\]
 By the duplication rule of the Gamma function \cite[5.5.5 on p. 138]{NIST2010}
 \begin{align}\label{E:Pi1/2}
 \Gamma(z)\Gamma(z+1/2) = \sqrt{\pi} 2^{1-2z} \Gamma(2z),\quad 2z\ne 0,-1,-2,\dots,
 \end{align}
 we have that
 \[
\FoxH{2+m,n}{p,2+q}{z}{(a_i,\alpha_i)_{1,p}}{(b,\beta),(b+1/2,\beta),(b_j,\beta_j)_{1,q}}
=2^{1-2b}\sqrt{\pi} \frac{1}{2\pi i}\int_{\calL}\calH(s)\Gamma(2b+2\beta s) (4^\beta z)^{-s}\ud s.
\]
Then apply the definition of the Fox H-function.
The second relation can be proved similarly.
\end{proof}
Here are some direct consequences of this lemma:
\begin{align*}
&\FoxH{1+m,n}{1+p,1+q}{z}{(a_i,\alpha_i)_{1,p},\:(b,\beta)}{ (2b,2\beta),\:(b_j,\beta_j)_{1,q}}
=
2^{2b-1}\pi^{-1/2}\:\FoxH{1+m,n}{p,1+q}{4^\beta z}{(a_i,\alpha_i)_{1,p}}{(1/2+b,\beta),\:(b_j,\beta_j)_{1,q}},\\[0.5em]
&\FoxH{1+m,n}{1+p,1+q}{z}{(a_i,\alpha_i)_{1,p},\:(1/2+b,\beta)}{ (2b,2\beta),\:(b_j,\beta_j)_{1,q}}
=
2^{2b-1}\pi^{-1/2}\:\FoxH{1+m,n}{p,1+q}{4^\beta z}{(a_i,\alpha_i)_{1,p}}{(b,\beta),\:(b_j,\beta_j)_{1,q}},\\[0.5em]
&\FoxH{m,1+n}{1+p,1+q}{z}{(1/2+b,\beta),\:(a_i,\alpha_i)_{1,p}}{(b_j,\beta_j)_{1,q},\: (2b,2\beta)}
=
4^{b}\pi^{1/2}\:\FoxH{m,n}{p,1+q}{4^{-\beta} z}{(a_i,\alpha_i)_{1,p}}{(b_j,\beta_j)_{1,q},\:(b,\beta)},\\[0.5em]
&\FoxH{m,1+n}{1+p,1+q}{z}{(1/2+b,\beta),\:(a_i,\alpha_i)_{1,p}}{(b_j,\beta_j)_{1,q},\: (2b,2\beta)}
=
4^{b}\pi^{1/2}\:\FoxH{m,n}{p,1+q}{4^{-\beta} z}{(a_i,\alpha_i)_{1,p}}{(b_j,\beta_j)_{1,q},\:(b,\beta)}.
\end{align*}

\bigskip

\begin{remark}
In \cite{Chen14Time}, the Green function $G_\beta(t,x)$, which corresponds to $Y_{2,\beta,\Ceil{\beta}-\beta,1}(t,x)$,
is represented using the two-parameter Mainardi function of order
$\lambda\in [0,1)$ (see \eqref{E:2p-Mainardi}).
By the series expansion of the Fox H-function (\cite[Theorem 1.3]{KilbasSaigo04H}, which requires that $\Delta=1-\lambda>0$), one can see that
\begin{align}\label{E:M-H}
M_{\lambda,\mu} (z) =
z^{-1}\FoxH{1,0}{1,1}{z}{(\mu,\lambda)}{(1,1)},\quad \lambda\in [0,1).
\end{align}
By Property 2.4 of \cite{KilbasSaigo04H}, the above relation can also be written as
\begin{align}\label{E:M-H2}
\FoxH{1,0}{1,1}{z^2}{(\mu,\lambda)}{(1,2)}=\frac{z}{2}\: M_{\lambda/2,\mu} (z),\quad \lambda\in [0,1).
\end{align}
\end{remark}


\begin{remark}
Another commonly used special function in this setting, such as in \cite{Pskhu09}, is {\it Wright's function}
\cite{Wright34AsymBessel,Wright33Coeff,Wright40GenBessel} (see also \cite[Appendix F]{Mainardi10Book}):
\begin{align}
 \phi(\lambda,\mu;z):=\sum_{k=0}^\infty \frac{z^k}{k! \Gamma(\mu+\lambda k)},\quad \text{for $\lambda>-1$, $\mu\in\bbC$.}
\end{align}
We adopt the notation $\phi$ that is used by E.~M. Wright in his original papers.
By (2.9.29) and Property 2.5 of \cite{KilbasSaigo04H},
\begin{align}\label{E:W-H}
 \phi(\lambda,\mu;z) =
 \begin{cases}
  z^{-1} \FoxH{1,0}{0,2}{z}{\midrule}{(1,1),\:(1+\lambda-\mu,\lambda)} & \text{if $\lambda> 0$},\\[1em]
  z^{-1} \FoxH{1,0}{1,1}{z}{(\mu-\lambda,-\lambda)}{(1,1)} & \text{if $\lambda\in (-1,0]$}.\\
 \end{cases}
\end{align}
Comparing \eqref{E:M-H} and \eqref{E:W-H}, we see that
\begin{align}\label{E:M-Wright}
M_{\lambda,\mu}(z) = \phi(-\lambda,\mu-\lambda;z),\quad\text{for $\lambda\in (0,1]$.}
\end{align}
\end{remark}

\bigskip
The following theorem is a simplified version of Theorems 2.9 and 2.10 in \cite{KilbasEtc06}, which is
sufficient for our use in the proof of Theorem \ref{T:NonY}.
%

\begin{theorem}\label{T:HConvH}
Let $(a_1^*,\Delta_1,\mu_1)$ and $(a_2^*,\Delta_2,\mu_2)$ be the constants $(a^*,\Delta,\mu)$ defined in
\eqref{E:a*}, \eqref{E:Delta} and \eqref{E:mu} for the following two Fox H-functions:
\[
 \FoxH{m,n}{p,q}{x}{(a_i,\alpha_i)_{1,p}}{(b_j,\beta_j)_{1,q}}
 \quad\text{and}\quad
 \FoxH{M,N}{P,Q}{x}{(d_i,\delta_i)_{1,P}}{(c_j,\gamma_j)_{1,Q}},
\]
respectively. Denote
\begin{align*}
&A_1 = \min_{1\le i\le n} \frac{1-\Re(a_i)}{\alpha_i},\quad
B_1 = \min_{1\le j\le m} \frac{\Re(b_j)}{\beta_j},\\
&A_2 = \min_{1\le j\le M} \frac{\Re(c_j)}{\gamma_j},\quad
B_2 = \min_{1\le i\le N} \frac{1-\Re(d_i)}{\delta_i},
\end{align*}
with the convention that $\min(\phi)=+\infty$.
If either of the following four conditions holds
\begin{enumerate}[(1)]
 \item $a_1^*> 0$ and $ a_2^*> 0$;
 \item $a_1^*=\Delta_1=0$, $\Re(\mu_1)<-1$ and $a_2^*>0$;
 \item $a_2^*=\Delta_2=0$, $\Re(\mu_2)<-1$ and $a_1^*>0$;
 \item $a_1^*=\Delta_1=0$, $\Re(\mu_1)<-1$ and $a_2^*=\Delta_2=0$, $\Re(\mu_2)<-1$,
\end{enumerate}
and if
\begin{align}\label{E:HConvH}
A_1+B_1>0,\quad A_2+B_2>0,\quad A_1+A_2>0,\quad B_1+B_2>0,
\end{align}
then, for all $z>0$, $x\in\R$,
\begin{align*}
&\FoxH{m+M,n+N}{p+P,q+Q}{z x}{(a_i,\alpha_i)_{1,n},\: (d_i,\delta_i)_{1,P},\: (a_i,\alpha_i)_{n+1,p}}
 {(b_j,\beta_j)_{1,m},\: (c_j,\gamma_j)_{1,Q},\: (b_j,\beta_j)_{m+1,q}}
 \\
&\hspace{4em} =\int_0^\infty
 \FoxH{m,n}{p,q}{z t}{(a_i,\alpha_i)_{1,p}}{(b_j,\beta_j)_{1,q}}
 \FoxH{M,N}{P,Q}{\frac{x}{t}}{(d_i,\delta_i)_{1,P}}{(c_j,\gamma_j)_{1,Q}}\frac{\ud t}{t}.
\end{align*}
\end{theorem}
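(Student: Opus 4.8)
The plan is to prove the identity by Mellin transform, exploiting that the Mellin transform of a Fox H-function is exactly the integrand $\mathcal{H}^{m,n}_{p,q}(s)$ of its defining Mellin--Barnes representation \eqref{E:FoxH}. First I would record, from \cite[Section 2.2]{KilbasSaigo04H}, that
\[
\int_0^\infty \FoxH{m,n}{p,q}{w}{(a_i,\alpha_i)_{1,p}}{(b_j,\beta_j)_{1,q}} w^{s-1}\,\ud w = \mathcal{H}^{m,n}_{p,q}(s)
\]
on the vertical strip $-B_1<\Re(s)<A_1$, which is nonempty precisely because $A_1+B_1>0$; likewise the Mellin transform of the second Fox H-function equals $\mathcal{H}^{M,N}_{P,Q}(s)$ on the strip $-A_2<\Re(s)<B_2$, nonempty because $A_2+B_2>0$. (The apparent interchange of the roles of $A$ and $B$ between the two functions is forced by the way the parameter lists are indexed in \eqref{E:FoxH}.) Using the case hypotheses on $(a_i^*,\Delta_i,\mu_i)$ together with the asymptotic results of \cite[Sections 1.5 and 1.7]{KilbasSaigo04H}, one checks that under any of the four listed conditions both Fox H-functions are well defined on $(0,\infty)$, admit these Mellin representations, and have algebraic endpoint behaviour: the first is $\sim w^{B_1}$ as $w\to 0^+$ and $\sim w^{-A_1}$ as $w\to\infty$ (up to logarithmic factors), while the second is $\sim w^{A_2}$ as $w\to 0^+$ and $\sim w^{-B_2}$ as $w\to\infty$.

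Next I would verify that the integral on the right of the claim converges absolutely. Splitting $\int_0^\infty(\cdot)\,\ud t/t$ at $t=1$ and inserting the endpoint asymptotics, the integrand is controlled by a constant times $t^{B_1+B_2-1}$ (with a possible logarithm) near $t=0$ and by $t^{-A_1-A_2-1}$ near $t=\infty$, so absolute convergence holds exactly under the remaining two hypotheses $B_1+B_2>0$ and $A_1+A_2>0$ of \eqref{E:HConvH}. The same four inequalities also make the intersection $\max(-B_1,-A_2)<\Re(s)<\min(A_1,B_2)$ of the two fundamental strips nonempty; fix a common contour parameter $\gamma$ inside it.

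Then I would compute the Mellin transform of both sides. Writing each Fox H-function through its Mellin--Barnes integral along $\Re(s)=\gamma$ and changing variables in the $t$-integral, the interchange of integrations—justified by the absolute convergence just established together with the decay of $\mathcal{H}^{m,n}_{p,q}(\gamma+i\tau)$ and $\mathcal{H}^{M,N}_{P,Q}(\gamma+i\tau)$ as $|\tau|\to\infty$ (exponential or sufficiently strong polynomial decay, supplied by the case conditions)—gives the Mellin convolution identity, so that the Mellin transform of the left-hand side equals $\mathcal{H}^{m,n}_{p,q}(s)\,\mathcal{H}^{M,N}_{P,Q}(s)$ up to the obvious power of the parameter. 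Multiplying the two ratios of Gamma functions shows that $\mathcal{H}^{m,n}_{p,q}(s)\,\mathcal{H}^{M,N}_{P,Q}(s)$ is literally $\mathcal{H}^{m+M,n+N}_{p+P,q+Q}(s)$ for the Fox H-function with the concatenated and reordered parameter lists appearing on the right of the claim, and the common-strip condition guarantees that the line $\Re(s)=\gamma$ is an admissible contour in the sense of Definition \ref{D:H}. Inverting the Mellin transform on $\Re(s)=\gamma$ then yields the stated formula, with argument $zx$.

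The main obstacle will be the technical bookkeeping that makes the change of variables and the Fubini interchange legitimate—namely producing a single vertical line $\Re(s)=\gamma$ that simultaneously lies in the fundamental strip of each factor and along which the resulting double integral is absolutely convergent; this is precisely the content of the four inequalities \eqref{E:HConvH}, while the case distinction on $(a_i^*,\Delta_i,\mu_i)$ is what supplies both the endpoint asymptotics and the vertical decay of $\mathcal{H}^{m,n}_{p,q}$ and $\mathcal{H}^{M,N}_{P,Q}$ in the degenerate situations $a_i^*=\Delta_i=0$, where only $\Re(\mu_i)<-1$ is available and the functions decay algebraically rather than exponentially. Since the statement is a specialization of \cite[Theorems 2.9 and 2.10]{KilbasEtc06}, in the write-up I would either carry out this argument in detail or simply cite those theorems after checking that our hypotheses imply theirs.
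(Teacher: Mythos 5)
Your proposal is correct, but it is organized quite differently from what the paper actually does. The paper's proof is a two-line reduction: it first applies Property 2.3 of \cite{KilbasEtc06} to rewrite the factor $\FoxH{M,N}{P,Q}{x/t}{(d_i,\delta_i)_{1,P}}{(c_j,\gamma_j)_{1,Q}}$ as $\FoxH{N,M}{Q,P}{t/x}{(1-c_j,\gamma_j)_{1,Q}}{(1-d_i,\delta_i)_{1,P}}$, so that both H-functions have arguments linear in $t$, and then invokes Theorem 2.9 (for case (1)) or Theorem 2.10 (for cases (2)--(4)) of \cite{KilbasEtc06} with $\eta=0$, $\sigma=1$, $w=1/x$ and the corresponding parameter substitutions, noting that the reflection preserves the parameter $\mu$. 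You instead propose to carry out from scratch the Mellin-convolution argument that underlies those cited theorems: identify the fundamental strips $(-B_1,A_1)$ and $(-A_2,B_2)$, check absolute convergence of the $t$-integral from the endpoint asymptotics (which is exactly where $B_1+B_2>0$ and $A_1+A_2>0$ enter), pick a common contour in the intersection of the strips (nonempty by $A_1+B_1>0$ and $A_2+B_2>0$), and observe that the product of the two Mellin kernels is literally the kernel $\mathcal{H}^{m+M,n+N}_{p+P,q+Q}(s)$ of the concatenated parameter list. All of this is sound, and it has the virtue of making transparent which hypothesis does what (the case distinction on $a_i^*,\Delta_i,\mu_i$ supplies existence of the factors and the vertical decay needed for Fubini; the four inequalities supply the common strip and endpoint integrability); the cost is that you must honestly justify the interchange of integrals, which is the bulk of the work in \cite{KilbasEtc06}. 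One caveat if you fall back on your stated alternative of simply citing Theorems 2.9 and 2.10: those theorems are formulated for integrands in which both H-functions have arguments proportional to $t$, so you would still need the reflection step via Property 2.3 (and the observation that $\mu$ is unchanged under it) to bring the $x/t$ argument into the required form --- this is the one ingredient of the paper's proof that your outline does not mention.
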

\begin{proof}
By Property 2.3 of \cite{KilbasEtc06},
\begin{align}\label{E_:HH}
 \FoxH{M,N}{P,Q}{\frac{x}{t}}{(d_i,\delta_i)_{1,P}}{(c_j,\gamma_j)_{1,Q}}
 =
 \FoxH{N,M}{Q,P}{\frac{t}{x}}{(1-c_j,\gamma_j)_{1,Q}}{(1-d_i,\delta_i)_{1,P}}.
\end{align}
If condition (1) holds, one can apply Theorem 2.9 of \cite{KilbasEtc06} with $\eta=0$, $\sigma=1$, $w=1/x$, and with the following replacements:
$N\rightarrow M$, $M\rightarrow N$,
$P\rightarrow Q$, $Q\rightarrow P$,
$c_j\rightarrow 1-c_j$, $d_i\rightarrow 1-d_i$.
If either of conditions (2)--(4) holds, we apply Theorem 2.10 of \cite{KilbasEtc06}
in the same way.
Note that the parameters $\mu$ for both Fox H-functions in \eqref{E_:HH} are equal.
\end{proof}

\subsection{Proof of Lemma \ref{L:HAt0}}
\label{SS:HAt0}
\begin{proof}[Proof of Lemma \ref{L:HAt0}]
Let
\[
f(x)=\FoxH{2,1}{2,3}{x}{(1,1),\;(\eta,\beta)}{(d/2,\alpha/2),\:(1,1),\:(1,\alpha/2)}.
\]
Then $g(x)=x^{-d}f(x^\alpha)$.
Let
\[
H_{d,\alpha,\beta,\eta}(s):=\frac{\Gamma(d/2+\alpha s/2)\Gamma(1+s)\Gamma(-s)}{\Gamma(\eta+\beta s)\Gamma(-\alpha s/2)}.
\]
Denote the poles of $\Gamma(1+s)$ and $\Gamma(d/2+\alpha s/2)$ by
\[
A:=\{-(1+k): k=0,1,2,\cdots\}\quad\text{and}\quad
B:=\left\{-\frac{2l+d}{\alpha}: l=0,1,2,\cdots\right\},
\]
respectively.
According to the definition of Fox H-function, to calculate the asymptotic at zero,
we need to calculate the residue of $H_{d,\alpha,\beta,\eta}(s) z^{-s}$ at the rightmost poles in $A\cup B$.
Because $a^*=2-\beta>0$, all the nigh cases are covered by either (1.8.1) or (1.8.2) of \cite{KilbasEtc06}.
The notation $h_{jl}^*$ below follows from (1.3.5) of \cite{KilbasEtc06}.

{\bf\bigskip\noindent Case 1.~~} Assume that $\eta\ne \beta$ and $d/\alpha>1$.
In this case, the rightmost residue in $A\cup B$ is at $s=-1$ and it is a simple pole.
Hence,
\[
h_{20}^*=\frac{\Gamma\left((d-\alpha)/2\right)}{\Gamma(\eta-\beta)\Gamma(\alpha/2)}>0,
\]
and
\[
f(x)=h_{20}^* x +O\left(x^{\min(2,d/\alpha)}\right),\quad\text{as $x\rightarrow 0_+$}.
\]

{\bf\bigskip\noindent Case 2.~~} Assume that $\eta\ne \beta$ and $d/\alpha=1$.
The rightmost residue in $A\cup B$ is at $s=-1$ and it is of order two. Hence,
\begin{align*}
\Res_{s=-1}(H_{d,d,\beta,\eta}(s)x^{-s}) &= \lim_{s\rightarrow-1}\left[(s+1)^2H_{d,d,\beta,\eta}(s) x^{-s}\right]'\\
&=\lim_{s\rightarrow-1}\left(\left[(s+1)^2H_{d,d,\beta,\eta}(s)\right]' - (s+1)^2H_{d,d,\beta,\eta}(s)\log x\right) x^{-s}\\
&= C x -\frac{1}{\Gamma(\eta-\beta)\Gamma(1+d/2)} x \log x,
\end{align*}
where we have used the fact that $\Gamma(x)$ has simple poles at $x=-n$, $n=0,1,\dots$, with residue $\frac{(-1)^n}{n!}$.
Therefore,
\[
f(x) = -\frac{1}{\Gamma(\eta-\beta)\Gamma(1+d/2)} x \log x + O(x),\quad\text{as $x\rightarrow 0_+$}.
\]

{\bf\bigskip\noindent Case 3.~~} Assume that $\eta\ne \beta$ and $d/\alpha<1$.
The rightmost residue in $A\cup B$ is at $s=-d/\alpha$ and it is a simple pole. Hence,
\begin{align}\label{E:h10}
h_{10}^* =\frac{2}{\alpha} \frac{\Gamma(1-d/\alpha)\Gamma(d/\alpha)}{\Gamma(\eta-d\beta/\alpha)\Gamma(d/2)}>0,
\end{align}
where the nonnegativity is due to the fact that $\eta\ge \beta>\beta d/\alpha$. Therefore,
\[
f(x)= h_{10}^* x^{d/\alpha} + O(x^{\min((d+2)/\alpha,1)})
=h_{10}^* x^{d/\alpha} +O(x),\quad\text{as $x\rightarrow 0_+$}.
\]

{\bf\bigskip\noindent Case 4.~~} Assume that $\eta=\beta=1$.
By Property 2.2 of \cite{KilbasEtc06},
\[
f(x)=\FoxH{1,1}{1,2}{x}{(1,1)}{(d/2,\alpha/2),\:(1,\alpha/2)}.
\]
Hence,
\[
h_{10}^* = \frac{2\Gamma(d/\alpha)}{\alpha\Gamma(d/2)}\ne 0,
\]
and
\[
f(x)= h_{10}^* x^{d/\alpha} + O(x^{(d+2)/\alpha}),\quad\text{as $x\rightarrow 0_+$.}
\]
%

{\bf\bigskip\noindent Case 5.~~} Assume that $\eta=\beta\ne 1$ and $d/\alpha>2$.
The rightmost residue in $A\cup B$ is at $s=-1$, but this residue is vanishing because $\lim_{s\rightarrow-1}1/\Gamma(\beta+\beta s)=0$.
The rightmost nonvanishing residue in $A\cup B$ is at $s=-2$ and it is a simple pole. Hence,
\[
h_{21}^*=- \frac{\Gamma((d-2\alpha)/2)}{\Gamma(-\beta)\Gamma(\alpha)}.
\]
Hence,
\[
f(x)= h_{21}^* x^{2} + O(x^{\min(3,d/\alpha)}),\quad\text{as $x\rightarrow 0_+$.}
\]

{\bf\bigskip\noindent Case 6.~~} Assume that $\eta=\beta\ne 1$ and $d/\alpha=2$.
As in Case 6, the rightmost nonvanishing residue in $A\cup B$ is at $s=-2$,
and it is of order two. Then
\begin{align*}
\Res_{s=-2}(H_{d,d/2,\beta,\beta}(s)x^{-s}) &= \lim_{s\rightarrow-2}\left[(s+2)^2H_{d,d/2,\beta,\beta}(s) x^{-s}\right]'\\
&=\lim_{s\rightarrow-2}\left(\left[(s+2)^2H_{d,d/2,\beta,\beta}(s)\right]' - (s+2)^2H_{d,d/2,\beta,\beta}(s)\log x\right) x^{-s}\\
&= C x^2 +\frac{2}{\Gamma(-\beta)\Gamma(1+d/2)} x^2 \log x.
\end{align*}
Therefore,
\[
f(x) = \frac{2}{\Gamma(-\beta)\Gamma(1+d/2)} x^2 \log x + O(x^3),\quad\text{as $x\rightarrow 0_+$}.
\]

{\bf\bigskip\noindent Case 7.~~} Assume that $\eta=\beta\ne 1$ and $d/\alpha\in (1,2)$.
As in Case 6, because $h_{20}^*\equiv 0$, the rightmost nonvanishing residue in $A\cup B$ is at $s=-d/\alpha$,
and it is a simple pole. Hence,
\[
f(x) =h_{10}^* x^{d/\alpha} + O(x^2),\quad\text{as $x\rightarrow 0_+$,}
\]
where $h_{10}^*$ is defined in \eqref{E:h10} with $\eta$ replaced by $\beta$.

{\bf\bigskip\noindent Case 8.~~} Assume that $\eta=\beta\ne 1$ and $d/\alpha=1$.
The rightmost nonvanishing residue in $A\cup B$ is at $s=-1$, and it is of order two.
Hence,
\begin{align*}
\Res_{s=-1}(H_{d,d,\beta,\beta}(s)x^{-s}) &= \lim_{s\rightarrow-1}\left[(s+1)^2H_{d,d,\beta,\beta}(s) x^{-s}\right]'\\
&=\lim_{s\rightarrow-1}\big[\calH_1(s)' \calH_2(s)+\calH_1(s)\calH_2(s)' - \calH_1(s)\calH_2(s)\log x\big] x^{-s},
\end{align*}
where
\[
\calH_1(s)=(s+1)^2\Gamma((1+s)d/2)\Gamma(1+s)\quad\text{and}
\quad
\calH_2(s)= \frac{\Gamma(-s)}{\Gamma(\beta+\beta s)\Gamma(-d s /2)}.
\]
As calculated in the proof of Lemma 7.1 of \cite{CHHH15Time}, we have that
\begin{align*}
\calH_1(-1) &= \lim_{s\rightarrow -1} \calH_1^*(s)=\frac{2}{d}= \lim_{s\rightarrow-1}\frac{(1+s)^2}{((1+s)d/2)(1+s)}=\frac{2}{d},\\
\calH_2(-1) &=\lim_{s\rightarrow -1} \calH_2^*(s) = 0,\\
\left.\frac{\ud }{\ud s}\calH_2(s)\right|_{s=-1} &= \lim_{s\rightarrow -1} \frac{\Gamma(-s)}{\Gamma(-ds/2)} \left(\frac{1}{\Gamma(\beta(1+ s))}\right)'\\
&=\frac{\Gamma(1)}{\Gamma(d/2)}\lim_{s\rightarrow -1}- \frac{\psi(\beta(1+ s))}{\Gamma(\beta(1+ s))}\\
&=\frac{\beta}{\Gamma(d/2)}\,,
\end{align*}
where $\psi(z)$ is the digamma function and the last limit is due to (5.7.6) and (5.7.1) of \cite{NIST2010}.
Therefore,
\begin{align*}
\Res_{s=-1}(H_{d,d,\beta,\beta}(s)x^{-s}) = \frac{\beta}{\Gamma(1+d/2)}\: x,
\end{align*}
and
\[
f(x)=  \frac{\beta}{\Gamma(1+d/2)}\: x + O(x^2),\quad\text{as $x\rightarrow 0_+$.}
\]

{\bf\bigskip\noindent Case 9.~~} Assume that $\eta=\beta\ne 1$ and $d/\alpha<1$.
The first nonvanishing residue in $A\cap B$ is at $s=-d/\alpha$ and it is a simple pole. Hence,
\[
f(x)=h_{10}^* x^{d/\alpha} +O(x),\quad\text{as $x\rightarrow 0_+$,}
\]
where $h_{10}^*$ is defined in \eqref{E:h10} with $\eta$ replaced by $\beta$.
This completes the whole proof of Lemma \ref{L:HAt0}.
\end{proof}
\addcontentsline{toc}{section}{Bibliography}

\def\polhk#1{\setbox0=\hbox{#1}{\ooalign{\hidewidth
  \lower1.5ex\hbox{`}\hidewidth\crcr\unhbox0}}} \def\cprime{$'$}
  \def\cprime{$'$}

\vspace{3em}
\hfill\begin{minipage}{0.55\textwidth}
{\bf Le CHEN}, {\bf Yaozhong HU}, {\bf David NUALART}\\[0.2em]
Department of Mathematics\\
University of Kansas\\
405 Snow Hall, 1460 Jayhawk Blvd,\\
Lawrence, Kansas, 66045-7594, USA.\\
E-mails: \: \url{chenle, yhu, nualart@ku.edu}
\end{minipage}

\end{document}